\numberwithin{equation}{section}
\DeclareMathSymbol{\leqslant}{\mathalpha}{AMSa}{"36} % nicer `smaller or equal'
\DeclareMathSymbol{\geqslant}{\mathalpha}{AMSa}{"3E} % nicer `larger or equal'
\DeclareMathSymbol{\eset}{\mathalpha}{AMSb}{"3F}     % nicer `emptyset'
\renewcommand{\leq}{\;\leqslant\;}                   % redef. of < or =
\renewcommand{\geq}{\;\geqslant\;}                   % redef. of > or =
\newcommand{\be}{\begin{equation}}
\def\1{\ifmmode {1\hskip -3pt \rm{I}} \else {\hbox {$1\hskip -3pt \rm{I}$}}\fi}
\newtheorem{Th}{Theorem}[section]
\newtheorem{Le}[Th]{Lemma}
\newtheorem{Cor}[Th]{Corollary}
\newcommand{\cA}{\ensuremath{\mathcal A}}
\newcommand{\cB}{\ensuremath{\mathcal B}}
\newcommand{\cC}{\ensuremath{\mathcal C}}
\newcommand{\cD}{\ensuremath{\mathcal D}}
\newcommand{\cE}{\ensuremath{\mathcal E}}
\newcommand{\cJ}{\ensuremath{\mathcal J}}
\newcommand{\cK}{\ensuremath{\mathcal K}}
\newcommand{\cL}{\ensuremath{\mathcal L}}
\newcommand{\cM}{\ensuremath{\mathcal M}}
\newcommand{\cQ}{\ensuremath{\mathcal Q}}
\newcommand{\cR}{\ensuremath{\mathcal R}}
\newcommand{\cS}{\ensuremath{\mathcal S}}
\newcommand{\cU}{\ensuremath{\mathcal U}}
\newcommand{\bbR}{{\ensuremath{\mathbb R}} }
\let\a=\alpha \let\b=\beta  \let\c=\chi \let\d=\delta  \let\e=\varepsilon
 \let\g=\gamma     \let\k=\kappa  \let\l=\lambda
          \let\p=\pi  
\let\r=\rho  \let\s=\sigma \let\t=\tau   
\let\D=\Delta   \let\G=\Gamma
\title[Averaging and large deviation principles for PDMPs]{Averaging and large deviation principles for fully--coupled
piecewise deterministic Markov processes and  applications to
molecular motors}
\author{A. Faggionato}
\address{Alessandra Faggionato. Dipartimento di Matematica ``G. Castelnuovo", Universit\`a ``La
  Sapienza''. P.le Aldo Moro  2, 00185  Roma, Italy. e--mail:
  faggiona@mat.uniroma1.it}
\author{D. Gabrielli}
\address{Davide Gabrielli.  Dipartimento di Matematica, Universit\`a
dell'Aquila,   67100 Coppito, L'Aquila, Italy. e--mail:
gabriell@univaq.it}
\author{M. Ribezzi Crivellari}
\address{Marco Ribezzi Crivellari. Dipartimento di Fisica,  Universit\`a  di Roma
Tre, Via della vasca navale 84, 00146 Roma . e--mail:
ribezzi@fis.uniroma3.it}
\begin{document}

\maketitle

\begin{abstract} We consider  Piecewise Deterministic Markov Processes (PDMPs) with a finite set of discrete
states. In the regime of fast jumps between discrete states, we
prove a law of large number and a large deviation principle. In the
regime of fast and slow jumps, we analyze a coarse--grained process
associated to the original one and prove its convergence to a new
PDMP with effective force fields and jump rates. In all the above
cases, the continuous variables evolve slowly according to ODEs.
 Finally, we discuss some applications related
to the mechanochemical cycle of macromolecules, including
strained--dependent power--stroke molecular motors. Our analysis
covers the case of fully--coupled slow and fast motions.

\bigskip

\noindent
{\em Key words:} piecewise deterministic Markov process,
averaging principle, large deviations, molecular motors.

\smallskip

\noindent
{\em MSC-class:   34C29, 60F10, 80A30}

\end{abstract}

%\rightline{{\sl To Pierpaolo Gabrielli}}

%\section{Introduction}

\section{Introduction}

Several systems in physics, chemistry,  biology, control and
optimization theory  present a multiscale character due to
interacting parts evolving on different relevant timescales. In the
case of two relevant timescales, the dynamics is a combination of
slow and fast motions and often  the slow motion is supposed to be
well approximated by averaging the effect of fast motion,
considering the fast variables as locally equilibrated. This is the
content of the {\sl averaging principle}, which  has been
successfully applied in several contests. We just mention  few
examples.    One of its first applications has been the study, due
to Newton, of the precession of the equinoxes (see \cite{A} for
averaging methods in mechanics). An example in quantum mechanics is
given by the Born-Oppenheimer approximation, based on the fact that
 electrons move much faster than nuclei. Recently, averaging methods
have been applied also to model climate--weather interactions (see
for example \cite{CMP}).

\smallskip

 The validity of the   averaging principle has been rigorously proved  for several
models assuming that the fast motion is independent of the slow one,
assumption which is not fulfilled by many real systems where fast
and slow variables influence each other. A rigorous proof in a
stochastic  fully--coupled case, with additional results on large
deviations, has been provided by \cite{V1} and \cite{V2} (the former
contains errors, recently corrected in the latter). There the author
considers a system described by variables $(x_t,\s_t)\in M\times N$,
$M$ and $N$ being differential manifolds, such that the slow
variable $x_t$ evolves according to an ODE  depending both on $x_t$
and $\s_t$, while the fast variable $\s_t$ undergoes a diffusion
with local drift and variance depending again both on $x_t$ and
$\s_t$. In \cite{K} the author has given among other an alternative
proof of the averaging and large deviation principles for the
fully--coupled model as above, with the extension   that the  fast
variable $\s_t$ varies in $N\times \G$, $N$ being a differentiable
manifold and $\G$ a finite set, by diffusion on $N$ and random jumps
in $\G$.

\smallskip

 We consider here a fully--coupled system whose state is described by
a slow variable $x_t$ varying in a differentiable manifold $M$ (for
simplicity of notation we take below  $M=\bbR^d$) and by a  fast
variable $\s_t$ varying in a finite set $\G$. We will consider also the case that $\s_t$ evolves
fast only when varying inside some subclasses -- metastates -- of $\G$.
We call $x_t$ the
mechanical state of the system, and $\s_t$ the chemical one. The
mechanical state $x_t$  evolves according to an ODE whose form
depends on the chemical state $\s_t$, while $\s_t$ is a
continuous--time stochastic  process with jump rates depending on the
mechanical state $x_t$. In other words, the system we consider  is a
fully--coupled piecewise deterministic Markov process (PDMP).
  Our
motivation and terminology  come from molecular motors, but the same
model is natural in other contests as for example operations
research, control theory  and optimization (see \cite{D2},
\cite{YZ}).

 \smallskip
 Molecular motors are proteins, hence biological
macromolecules, working inside the cell at the nanometer scale as
microscopic engines, powered by the  chemical energy provided by ATP
hydrolysis. Their tasks are various, as for example they can
transport cargos along filaments or determine  muscle contraction.
 Molecular motors can be in different structural states, that we
simply call {\em chemical states}: attached to the  filament,
detached from the filament, bound   to an ATP molecule, bound to the
products of ATP hydrolysis and so on. Independently from the
chemical state, there is  a specific region of the molecular motor
working as a lever--arm, able to swing  and produce work,  while the
fuel (ATP) is collected from the environment and used to derive
energy.  The conformation of the lever--arm can be described by a
continuous variable $x \in M$, that we call {\em mechanical state}.
The above picture is known in the biophysical literature as
power--stroke model, cross--bridge model and also lever--arm model,
in contrast with the so called ratchet model  (see \cite{H} and
references therein).

\smallskip

  In order to investigate the functionality of molecular motors, it is enough to describe their state by means of the
pair  $(x,\s)$, whose dynamics  can be modeled  in a first
approximation by a fully--coupled  PDMP  as discussed in Section
\ref{motori}. There is experimental evidence that the family $\G$  of
chemical states is partitioned in subclasses $\G_1, \G_2, \dots ,
\G_\ell$ inside which chemical jumps are much faster (see \cite{D} and references therein).
As implicitly assumed in biophysical papers as \cite{VD}, we  also suppose that these  jumps are faster than
the mechanical relaxation time scale.
 Considering
first the case  $\ell=1$, in the limit of high frequency of chemical
jumps,  we prove averaging and large deviation principles  both for
the slow motion of the mechanical state $x$ and for the occupation
measure associated to the chemical states visited in a fixed time
interval.
 The analysis for  the occupation measure is lacking in \cite{K},
moreover we provide a different  and  simpler  derivation of the
averaging and large deviation principles with respect   to
\cite{V1}, \cite{V2} and \cite{K}.
 In the case $ \ell >1$,   the subclasses $\G_j$ can be
treated as chemical metastates. We then show   that  in the limit of
high jump--frequency   the coarse--grained    process describing the
evolution of the mechanical state and the chemical metastate of the system weakly
converges  to a new PDMP with effective force fields (ODE's) and
transitions rates. In Section \ref{motori} we discuss some
biological consequences concerning molecular motors, while in a
companion paper \cite{FGR} we further investigate the model from a
more physical viewpoint.

\smallskip

\section{Model and main results}
\label{mod}

We  consider systems whose states are described by a pair $(x,\s)\in
\bbR^d\times \G$,  where $\G=\{\s_1, \s_2, \dots, \s_{|\G|}\}$ is a
finite set. The variable $x$ will define the {\sl mechanical state}
of the system and the variable $\s$ its {\sl chemical state}.

%The set of {\sl mechanical states}  of the system is given by an
%open subset $\O \subset \bbR ^d$, while the{\sl chemical states}
%form a  finite set $\G=\{\s_1, \s_2, \dots, \s_\k\}$. Hence, the
%state of the system is described by the couple $(x,\s)$ with $x\in
%\O$, $\s\in \G$.

\smallskip

When the chemical state is $\s$, the system evolves mechanically
according to the ODE
\begin{equation}\label{sole}
\dot{x}(t)= F_\s (x(t),t)
\end{equation}
where the force field $F_\s:\bbR^d \times [0,\infty) \rightarrow
\bbR ^d $ is a continuous function,  locally  Lipschitz w.r.t. $x$.   This means that, given $T>0$ and a compact subset
$\cK\subset \bbR^d$, there exists a positive
constant $K$ such that
\begin{equation}\label{tragediegreche}
\left| F_\s (x,t) - F_\s (x',t) \right| \leq K |x-x'|\,, \qquad
\forall x,x' \in \cK,\, t \in [0,T]\,.
\end{equation}
%The time interval $[0,T]$ is the fixed time interval on which we
%observe the dynamics.
Below we will specify some additional assumption on $F_\s$ assuring that  the solution $x(t)$
of (\ref{sole}) starting in a given  point $x\in \bbR^d$ is uniquely
determined in each time interval $[0,T]$.
\smallskip

In order to specify how the system jumps from one chemical state to
the other, for each $\s \in \G$  let $\g_\s (x,t)$ be a measurable
positive function on $\bbR^d \times [0,\infty)$ and let $p (\s,\s'
|x,t)$ be a function defined on $\G\times \G \times \bbR^d \times
[0,\infty) $
with the following properties:\\
(i) $\forall \s ,\s' $, $p (\s,\s' |\cdot,\cdot)$ is
measurable,\\
(ii) $\forall x, t,\s $, $p(\s, \cdot |x,t)$ is a probability
measure on $\G$ such that  $p(\s,\s |x,t)=0$.
% As the reader will check,this last condition (i.e., zero diagonal elements) can be assumedwithout any

%We point out  that the above conditions could be weaken,  for
%example it is enough to require piecewise continuity in the time
%variable $t$. All the proofs would need only slight modifications
%that we leave  to the interested reader.

\smallskip
The mechanochemical evolution of the system is described as follows.
Suppose the system starts at time zero in the state $(x_0, \s_0)$.
Call $x_0(t)$ the solution of (\ref{sole}) with $\s=\s_0$, such that
$x_0(0)=x_0$. Let $\t_1$ be a  random variable with value in
$(0,\infty]$ s.t.
$$
P(\t_1>t)= \exp \left(-  \int_0 ^t \g_{\s_0 }(x_0(s),s) ds
\right)\,.
$$
Note that the r.h.s.  decreases in $t$ and  equals $1$ if $t=0$,
thus implying that $\t_1$ is well defined. Then, in the random time
interval $[0,\t_1)$ the state of the system is given by $(x_0(t),
\s_0)$. If $\t_1=\infty$, we have done. Otherwise, choose a chemical
state $\s_1\in \G$ with probability $p ( \s_0,\s_1
|x_0(\t_1),\t_1)$, and call $x_1(t)$  the solution on $[\t_1,
\infty)$ of the Cauchy problem
$$
\begin{cases}
\dot{x}_1 (t) = F_{\s_1} ( x_1(t),t)\,, \qquad t \geq \t_1\,, \\
x_1(\t_1)=x_0(\t_1)\,.
\end{cases}
$$
Now let $\t_2$ be a  random variable with values in $(\t_1,\infty]$,
such that
$$
P(\t_2>t)=
 \exp \left(- \int_{\t_1} ^t \g_{\s_1
}(x_1(s),s) ds \right)\,,  \qquad t \geq \t_1\,.
$$
Then in the time interval $[\t_1,\t_2)$ the state of the system is
given by $(x_1(t), \s_1)$.

In general, denoting by  $\t_k$ the time of the $k$--th chemical
jump and by $(x_k(t), \s_k)$ the evolution  of the system in the
time interval $[\t_k, \t_{k+1})$, one has that $\t_{k+1}$ is a
random variable with value in $(\t_k,\infty]$ such that
\begin{equation}\label{bieta}
P(\t_{k+1}>t)=  \exp \left( - \int_{\t_k} ^t \g_{\s_k }(x_k(s),s) ds
\right)\,, \qquad t \geq \t_k\,.
\end{equation}
Moreover, $\dot{x} _k(t) = F_{\s_k} ( x_k (t) , t)$ on $[\t_k,
\t_{k+1})$ and  at time $\t_{k+1}$ the system jumps to a new
chemical state $\s_{k+1}\in \G$ with probability $p (\s_k,
\s_{k+1}|x_k(\t_{k+1}), \t_{k+1})$, while the mechanical state
remains the same, i.e. $x_{k} (\t_{k+1})=x_{k+1}(\t_{k+1})$. Setting
$\t_0=0$, the evolution $\bigl( x(t), \s(t)\bigr)$ is then defined
as
\begin{equation}
\bigl( x(t), \s(t)\bigr)= (x_k(t), \s_k) \,, \qquad \text{ if } \;
\: \t_k \leq t < \t_{k+1}\,.
\end{equation}
Note that the path $x(t)$ is continuous and piecewise $C^1$. In
order to have a well defined dynamics over $[0,T]$, it is necessary
that a.s. the system makes a finite number of jumps in the time
interval $[0,T]$. We will discuss below under which conditions this
automatically happens.

\smallskip

 As proven in \cite{D1}, \cite{D2}, the above stochastic
process is a strong Markov process, called {\sl piecewise
deterministic Markov process} (PDMP) with $(x,t)$--dependent
characteristics $\left( F,p,\g\right)$, defined as
$$
F=\left\{ F_\s\right\}_{\s\in \G};\qquad p=
\left\{p(\s,\s'|\cdot,\cdot)\right\}_{\s,\s'\in \G};\qquad
\g=\left\{\g _\s\right\}_{\s\in \G}.
$$
The time dependent generator $L_t$ reads
\begin{equation}\label{generare}
L_t g (x,\s) = F_\s (x,t) \cdot \nabla_x g(x, \s) + \g_\s (x,t) \sum
_{\s' \in \G} p (\s,\s'|x,t) \bigl ( g (x,\s') - g(x, \s) \bigr)
\end{equation}
for   functions  $g : \bbR^d \times \G \rightarrow \bbR$ regular in
$x\in \bbR^d$. A characterization of the domain of the so called
{\sl extended generator} of the process is given in \cite{D1},
\cite{D2}. Defining the transition rate
$$
r(\s,\s'|x,t):= \g_\s(x,t)  p(\s,\s'|x,t)\,.
$$
for a chemical jump from $\s$ to $\s'$ at time $t$ when being in the
state $(x,\s)$, the above generator can be written as
\begin{equation}\label{generale}
L_t g (x,\s) = F_\s (x,t) \cdot \nabla_x g(x, \s) +  \sum _{\s' \in
\G} r (\s,\s'|x,t) \bigl ( g (x,\s') - g(x, \s) \bigr)\,.
\end{equation}
Since knowing  $\bigl( F, r \bigr)$ is equivalent to knowing $\left(
F, p, \g  \right)$, we call both these families of functions {\sl
characteristics} of the PDMP.

\smallskip

 Given $x,t$ we define $L_c (x,t)$ as the contribution to
$L_t$ coming only from the  chemical transitions at the mechanical
state $x$, i.e. $L_c(x,t)$ is the operator on $\G^\bbR$ s.t.
$$
L_c (x,t) g(\s)= \sum _{\s' \in \G} r (\s, \s'|x,t) \bigl ( g (\s')
- g( \s) \bigr)\,, \qquad g :\G\rightarrow \mathbb R.
$$
%The above operator can be described by a  $\k\times \k$--matrix (recall that $\k=|\G|$) with
%entries parameterized by $\G$ defined as
%$$
%L_c (x,s) _{\s, \s'} :=
%\begin{cases}
%\g _\s (x,s) p_{\s} (\s '|x,s) & \text{ if } \s \not = \s ' \,,\\
% -\g _\s (x,s)\sum _{\widetilde \s \not = \s} p_{\s} (\widetilde \s |x,s)
% & \text{ if } \s = \s '\,.
%\end{cases}
%$$
%In particular,
For fixed $x,t$,   $L_c (x,t) $ is the  generator of the
time--homogeneous   Markov chain  on the space $\G$ which jumps from
$\s$ to $\s'$ with probability $  p (\s, \s' |x,t)$ after having
waited an exponential time with parameter $\g _\s (x,t)$ in the
state $\s$ (note that here $t$ has to be thought of as  a fixed
parameter).

\smallskip

We collect here our technical assumptions recalling that $[0,T]$ is
the time interval on which the evolution of the system will be
observed.

\begin{itemize}

\item  Assumption (A1):   a.s. the number  of chemical jumps
in the interval $[0,T]$ is  finite.

\item Assumption (A2): for any pair $(x,t) \in \bbR^d \times
[0,T]$, the time--homogeneous  Markov chain on $\G$ with generator
$L_c (x,t)$ is ergodic, i.e. it  visits with positive probability
any state in $\G$, for any starting point. We call $\mu(\cdot |x,t)$ its unique
invariant probability measure on $\G$.
%invariant w.r.t. $L_c (x,s)$, $(x,s) \in \O\times [0,T]$, for each
%$\s\in \G$ the weight  $\mu(\s|x,s)$
%\end{itemize}

\item Assumption (A3): the transition rates $r(\s,\s'|\cdot,\cdot)$
are nonnegative  functions and  belong to  $C^1( \bbR^d \times
[0,T])$.

\item Assumption (A4):   each force field
$F_\s(x,t) $
%and the averaged force field
%\begin{equation}\label{medio}
% \bar F (x,t):= \sum _{\s \in \G} \mu(\s|x,t) F_\s (x,t)\,,
% \end{equation}
is a  continuous function in
$(x,t) \in \bbR^d \times [0,T]$ and is locally  Lipschitz  in  $x$, i.e.  for each compact subset $\cK\subset
\bbR^d$ there exists a constant $K>0$ such that
\begin{equation}
 |F_\s(x,t)-F_\s(y,t)|\leq K|x-y|\,, \qquad
\forall x,y \in \cK,\; t \in [0,T]\,.
 \label{lipF}
%&  |\bar F (x,t)-\bar F ( y,t)|\leq K |x-y|\,, \label{lipbarF}
%& |p(\s,\s'|x,t)-p(\s,\s'|y,t)|\leq K|x-y|\,,\label{lipp} \\
%& |r(\s,\s'|x,t)-r(\s,\s'|y,t)|\leq K|x-y|\,,\label{lipr} \\
%&| \g_\s(x,t)-\g_\s (y,t) |\leq K|x-y|\,. \label{lipgamma}
\end{equation}
 Moreover, we assume that there exist constants $\k_1,\k_2>0$
such that
\begin{equation}\label{muccamuu}
|F_\s (x,t) |\leq \k_1+\k_2 |x|\,, \qquad \forall x \in \bbR^d,\, t \in [0,T] \,.
\end{equation}

\end{itemize}
Only in Section \ref{luna}, when considering fast and slow chemical
jumps,  we will slightly change  assumption (A2).

\smallskip

Let us give some comments on the above assumptions.
% As already
%remarked, assumption (A4) for the force fields  implies in
%particular that the mechanical evolution is well defined, while
%assumption
Assumption (A1) implies that the chemical evolution  is well
defined a.s.
%The following lemma, whose proof is postponed   to the Appendix, gives some sufficient conditions
% for the validity of (A1).
A simple criterion assuring (A1) is the following:
\begin{equation}
\sup_{\s \in \G, x \in \bbR^d, t \in [0,T]} \g_\s(x,t) < \infty\,.
\end{equation}
Indeed, calling $c$ the l.h.s., the random  number of  chemical
jumps in the interval $[0,T]$ is dominated by $N_T$, $N_\cdot$ being
a Poisson point process with rate $c$. This follows at once
observing that, due to (\ref{bieta}), given $\t_k$  the random time
$\t_{k+1}-\t_k$ is larger that $a\geq 0$ with probability bounded
from below by $e^{-c\,a}$. Weaker sufficient conditions are also
possible.

 Due to the  Perron--Frobenius Theorem, assumption (A2) implies that the
 time homogeneous Markov chain with generator $L_c (x,t)$ admits a
 unique invariant measure $\mu(\cdot |x,t)$ to which the Markov chain converges
as time goes to $\infty$. If (A2) is not satisfied then it is simple
to exhibit different invariant measures, hence (A2) is equivalent to
the existence of a unique invariant measure. Always due to
Perron--Frobenius Theorem, $\mu (\s|x,t)>0$ for each $\s \in \G$. We
call $\mu(\cdot|x,t)$ the {\sl  quasistationary measure} of the
chemical evolution (frozen in $x,t$).

We observe that  our assumptions   imply the following fact: for
each compact $\cK \subset \bbR^d$, there exists $\k>0$ such that
\begin{align}
%& |p(\s,\s'|x,t)-p(\s,\s'|y,t)|\leq \k |x-y|\,,\label{lipp} \\
%&| \g_\s(x,t)-\g_\s (y,t) |\leq \k |x-y|\,, \label{lipgamma}\\
& |r(\s,\s'|x,t)-r(\s,\s'|y,t)|\leq \k |x-y|\,,\label{lipr}\\
& | \g_\s(x,t)-\g_\s (y,t) |\leq \k |x-y|\,, \label{lipgamma}\\
 & |\mu(\s|x,t)-\mu (\s|y,t)|\leq \k |x-y|\,, \label{lipmu}
\end{align}
for all $x,y \in \cK$, $t \in [0,T]$, $\s,\s'\in \G$. Indeed, \eqref{lipr} and \eqref{lipgamma} trivially
follow from assumption (A3), while  \eqref{lipmu} can be derived from assumptions (A2) and (A3) (see Appendix \ref{misciotto}, where it is proven that
$\mu(\s|x,t)$ is
$C^1$ in $x,t$).

 Finally, let us point out some consequences  of assumption (A4), some of which will be useful later. To this aim we introduce the averaged
 vector field $\bar F: \bbR^d \times [0,T] \rightarrow \bbR^d$ defined as
\begin{equation}\label{medio}
\bar F (x,t):= \sum _{\s \in \G} \mu(\s|x,t) F_\s (x,t)\,.
\end{equation}
%  Due to (A4) the functions
%$F_\s (x,t) f(t)$ and $\bar F(x,t)$ are continuous in $(x,t)$ and
%are Lipschitz in $x$ uniformly w.r.t. $t \in [0,T]$,
It is simple to check that the force field $\bar F$ satisfies assumption (A4). As consequence, one can easily prove the following result
(see  Appendix  \ref{misciotto}):
\begin{Le}\label{sibillino}
Let $f:[0,T]\rightarrow \bbR$ be  a continuous function. Then,
given $s \in [0,T]$ and $x_0 \in \bbR^d$, the Cauchy problems
%  for any $x_0\in \O$, $\s\in \G$,
% $s\in [0,T]$ and any $C^1$ functio $f:[0,T]\rightarrow (0,1)$  the Cauchy problem
\begin{equation}\label{saratoga0}
\begin{cases}
\dot{x}(t)= F_\s \bigl( x(t),t\bigr)f(t)\,, \\
x(s) = x_0\,,
\end{cases}
\end{equation}
and
\begin{equation}\label{saratoga3}
\begin{cases}
\dot{x}(t)= \bar{F}(x(t),t) \,, \\
x(s) = x_0\,,
\end{cases}
\end{equation}
have unique  solutions in the time interval $[s,T]$.
\end{Le}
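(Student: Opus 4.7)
The plan is to reduce both Cauchy problems to the classical Picard--Lindelöf existence and uniqueness theorem, and then use an a priori Grönwall bound to rule out blow-up inside $[s,T]$, so that the local solutions extend to the whole interval.

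For \eqref{saratoga3} the vector field $\bar F$ satisfies (A4) by the remark preceding the statement (a straightforward check using \eqref{lipmu}, \eqref{lipF}, continuity of the $F_\sigma$ on compacts, and the fact that $\sum_\sigma \mu(\sigma|x,t)=1$). For \eqref{saratoga0}, set $G(x,t):=F_\sigma(x,t)f(t)$; since $f$ is continuous on the compact interval $[0,T]$ it is bounded by some $M>0$, so $G$ inherits from $F_\sigma$ the properties required in (A4): continuity in $(x,t)$, local Lipschitz continuity in $x$ with constant $KM$ (by \eqref{lipF}), and the linear growth bound $|G(x,t)|\leq M(\kappa_1+\kappa_2|x|)$ (by \eqref{muccamuu}).

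Next I invoke the standard local existence/uniqueness theorem for ODEs with continuous, locally Lipschitz right-hand side: for each initial datum $(s,x_0)$ there is a unique $C^1$ solution defined on a maximal interval $[s,t^*)\subset[s,T]$. To show $t^*=T$, suppose for contradiction $t^*<T$ and apply Grönwall's lemma to the integral form
$$
|x(t)|\leq |x_0|+\int_s^t M\bigl(\kappa_1+\kappa_2|x(u)|\bigr)\,du,\qquad t\in[s,t^*),
$$
for \eqref{saratoga0}, and the analogous inequality (without the factor $M$) for \eqref{saratoga3}. This yields
$$
|x(t)|\leq \bigl(|x_0|+M\kappa_1(t-s)\bigr)e^{M\kappa_2(t-s)},
$$
hence $x(t)$ remains in a fixed compact set on $[s,t^*)$. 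Standard continuation then allows the solution to be extended past $t^*$, contradicting maximality. Uniqueness on $[s,T]$ follows from Grönwall applied to the difference of two solutions, using the local Lipschitz bound on the compact set in which both solutions are confined.

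The only non-routine point is the verification that $\bar F$ satisfies (A4), which is the main obstacle of substance; once that is settled the rest is the textbook Picard--Lindelöf plus Grönwall argument uniformly in the parameter $\sigma$ and the continuous multiplier $f$.
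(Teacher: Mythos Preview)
Your proof is correct and shares the paper's overall structure: check that the relevant vector fields ($\bar F$ for \eqref{saratoga3}, $F_\sigma f$ for \eqref{saratoga0}) inherit the (A4) properties, invoke Picard--Lindel\"of for local existence and uniqueness, then use the linear growth bound \eqref{muccamuu} to pass from local to global existence on $[s,T]$. The only difference lies in this last step. You argue by Gr\"onwall that any solution stays in a fixed compact set on its maximal interval, so no finite-time blow-up can occur and the maximal interval must be all of $[s,T]$. The paper instead uses the linear growth bound to show directly that the Peano existence interval $[s,\, s+b/M(b)]$ has length bounded below by $1/(2\kappa_2)$ independently of the initial point, and then patches finitely many such intervals to cover $[s,T]$. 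Both are standard textbook arguments; your Gr\"onwall route is perhaps the more commonly seen one, while the paper's yields a uniform existence-time lower bound as a byproduct.
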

  Another consequence of  assumption (A4) is described in
 Lemma \ref{phelps} in the Appendix. The reader can check that all our proofs
 work in general for   continuous
 force  fields $F_\s: \bbR^d \times [0,T] \rightarrow \bbR^d$ that are  locally Lipschitz w.r.t.
 $x$, such that \eqref{saratoga0} and \eqref{saratoga3} have unique solutions,
and such that the content of Lemma \ref{phelps} remains valid. Finally, we note that (A4) is satisfied
whenever each $F_\s$ is continuous in $(x,t)$ and Lipschitz in $x$ uniformly in $t$, namely \eqref{lipF}
is satisfied with $\cK=\bbR^d$.

\smallskip

When $|\G|=2$  the above  PDMP can be used to model  motor proteins
with two main chemical states, for example  {\sl detached} and {\sl
attached state} respectively.  In order to shorten the notation, we
denote by $0$ the detached state and by $1$ the attached state,
hence $\G=\{0,1\}$. Note that it must be $p (0,1|x,t)=p(1,0|x,t)=1$,
hence
%and set $$ q(x,t) := \l _0 (x,y) p_0 (1|x,t)\,, \qquad r(x,t):= \l_1 (x,y)
%p_1 (0|x,t)\,.$$ T
the generator $L_t$ at time $t$ becomes
\begin{multline*}  L_t f (x, \s):=(1-\s) F_0 (x,t)\cdot  \nabla _x f
(x,0)+\s F_1 (x,t) \cdot \nabla
_x f(x,1)+ \\
(1-\s) \g_0(x,t) \left[ f(x,1)- f(x,0)\right] + \s \g_1 (x,t) \left[
f(x,0)-f(x,1) \right]\,.
\end{multline*}
In this case, the measure \begin{equation}\label{patroclo}
\mu(0|x,t) := \frac{\g_1 (x,t)}{\g_0(x,t)+\g_1(x,t)} \, \qquad
\mu(1|x,t):= \frac{\g_0(x,t)}{\g_0 (x,t)+\g_1(x,t)}
\end{equation}
is the only  invariant measure $\mu(\cdot |x,t)$ of the  Markov
chain on $\G$ with time--independent  generator $L_c(x,t)$.
Moreover, it is also reversible.

\bigskip

We are first interested in analyzing the limiting behavior of PDMPs
where the  time scale  of the chemical transitions is much smaller
than the time scale of  mechanical relaxation, i.e. $\s$ is a fast
variable and $x$ is a slow variable. To this aim, we introduce the
parameter $\l>0$ and  study the evolution on $[0,T]$ of the PDMP
starting in $(x_0, \s_0)$ with characteristics $\left( F, p,
\lambda\g \right)$ as $\l \uparrow \infty$. We call $P^\l_{x_0,
\s_0}$ its law and write $E^\l_{x_0, \s_0}$ for the associated
expectation. For this model we can state a law of large numbers
corresponding to the averaging principle  and a large deviation
principle.
% We note that this PDMP has paths in the Shorohod
%space $D([0,\infty), \O \times \G )$. More precisely, the  evolution
%$x(t)$ of the mechanical state is given by a continuous path, while
%the evolution $\s(t)$ of the chemical state can be identified with
%with vector measure valued path  $ y:[0,T
%In order to describe the asymptotic  behavior, it is necessary

%We can finally state our first main result in this  general setting:
%After that, we will discuss some particular cases, as the
%2--chemical state PDMP associated to myosin.

\subsection{Averaging principle}

\begin{Th}\label{LLN} %{\bf Law of large numbers.}\\
%Suppose assumptions (A0), (A1), (A2) , (A3) and (A4) are fulfilled
Given $(x_0,\s_0)\in \bbR^d\times \G$, call $x_*(t)$ the unique
solution of the Cauchy problem
\begin{equation}\label{trieste}
\begin{cases}
\dot{x}_* (t) = \bar{F}
(x_*(t),t)  \,, \;\; t \in [0,T]\,, \\
x_*(0) =x_0\,,
\end{cases}
\end{equation}
where the averaged force field $\bar F$ is defined as in
\eqref{medio}.

 Then, for any $\d>0$, $\s\in \G$ and for any
continuous function $f : [0,T]\rightarrow \bbR $ it holds
\begin{align}
& \lim_{\l \uparrow \infty} P ^\l_{x_0,\s_0} \left( \left | \int_0^T
f (t) \chi (\s(t) =\s)   dt - \int _0 ^T f(t) \mu( \s|x_*(t),t)  dt
\right |>\d \right)=0\,,\label{lim2} \\
& \lim _{\l \uparrow \infty} P ^\l_{x_0, \s_0} \left( \sup_{0\leq t
\leq T} |x(t)-x_*(t)|>\d\right) = 0\,.  \label{lim1}
\end{align}
\end{Th}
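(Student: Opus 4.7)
The plan is to combine an a priori confinement bound for $x(t)$ with a corrector/martingale argument exploiting the ergodicity of the fast chemical chain. First, the linear growth hypothesis \eqref{muccamuu} and Gronwall applied pathwise to $\dot x(t)=F_{\s(t)}(x(t),t)$ show that $x(t)$ lies a.s., uniformly in $\l$, in a deterministic compact $\cK\subset\bbR^d$ for all $t\in[0,T]$. Next, for every $\s\in\G$ and every $(x,t)\in\cK\times[0,T]$, the function $\eta\mapsto \chi(\eta=\s)-\mu(\s|x,t)$ on $\G$ has mean zero under $\mu(\cdot|x,t)$, so by the Fredholm alternative for the ergodic generator $L_c(x,t)$ there is a unique corrector $\psi_\s(\eta,x,t)$ normalized by $\sum_\eta \mu(\eta|x,t)\psi_\s(\eta,x,t)=0$ satisfying
\begin{equation*}
L_c(x,t)\psi_\s(\cdot,x,t)(\eta)=\chi(\eta=\s)-\mu(\s|x,t).
\end{equation*}
By the same linear-algebra argument used in Appendix~\ref{misciotto} to obtain \eqref{lipmu}, assumptions (A2)--(A3) imply that $\psi_\s$ is $C^1$ in $(x,t)$ with derivatives bounded on $\cK\times[0,T]$.

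To prove \eqref{lim1}, I would introduce the $\bbR^d$-valued function
\begin{equation*}
g(\eta,x,t):=\sum_{\s\in\G} F_\s(x,t)\,\psi_\s(\eta,x,t),
\end{equation*}
which by construction obeys $L_c(x,t)g(\cdot,x,t)(\eta)=F_\eta(x,t)-\bar F(x,t)$. Dynkin's formula for the time-inhomogeneous generator $L^\l_t=F_\s\cdot\nabla_x+\partial_t+\l L_c$ applied componentwise to $g(\s(t),x(t),t)$ and rearranged yields
\begin{equation*}
\int_0^t \bigl[F_{\s(s)}(x(s),s)-\bar F(x(s),s)\bigr]\,ds = \tfrac{1}{\l}\Big[g(\s(t),x(t),t)-g(\s_0,x_0,0)-\int_0^t\bigl(\partial_s g+F_{\s(s)}\cdot\nabla_x g\bigr)ds-M_t\Big],
\end{equation*}
where $M$ is a martingale whose predictable quadratic variation, computed from the jump part $\l L_c$ via the carré-du-champ, satisfies $\bbE[M_t^2]\leq C\l t$ uniformly on $\cK\times[0,T]$. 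Doob's $L^2$ inequality then gives $\sup_{t\leq T}\bigl|\int_0^t[F_{\s(s)}-\bar F](x(s),s)\,ds\bigr|=O(\l^{-1/2})$ in probability. Writing $x(t)-x_*(t)$ as this quantity plus $\int_0^t[\bar F(x(s),s)-\bar F(x_*(s),s)]\,ds$ and invoking Lipschitz continuity of $\bar F$ on $\cK$, Gronwall's inequality delivers \eqref{lim1}.

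For \eqref{lim2}, I would first take $f\in C^1([0,T])$ and apply the same Dynkin/Doob procedure to the auxiliary scalar function $(\eta,x,t)\mapsto f(t)\psi_\s(\eta,x,t)$, obtaining
\begin{equation*}
\int_0^T f(s)\bigl[\chi(\s(s)=\s)-\mu(\s|x(s),s)\bigr]\,ds\longrightarrow 0\quad\text{in probability};
\end{equation*}
a uniform approximation in $C([0,T])$ by $C^1$ functions removes the smoothness hypothesis on $f$. Finally, by \eqref{lipmu} and the already proven \eqref{lim1}, the remainder $\int_0^T f(s)[\mu(\s|x(s),s)-\mu(\s|x_*(s),s)]\,ds$ is bounded by $\|f\|_\infty\,\k \int_0^T|x(s)-x_*(s)|\,ds$, which vanishes in probability, establishing \eqref{lim2}.

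The main technical obstacle is the regularity of the corrector $\psi_\s$ in $(x,t)$: one must invert the singular operator $L_c(x,t)$ on its mean-zero subspace and show that this inverse depends in $C^1$ fashion on the parameters. This rests on (A2) (the zero eigenvalue of $L_c(x,t)$ is simple, yielding a uniform spectral gap on $\cK\times[0,T]$) combined with the $C^1$ rate regularity from (A3), and is essentially the fact already needed in Appendix~\ref{misciotto} for $\mu(\s|x,t)$. Once this smoothness is in hand, the rest of the argument is routine Dynkin/Doob/Gronwall bookkeeping.
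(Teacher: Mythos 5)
Your corrector/Poisson-equation argument is a genuinely different route from the paper, which instead uses a two-scale time discretization: it partitions $[0,T]$ into $\delta$-blocks, further into $\epsilon$-blocks, controls the Radon--Nikodym derivative of the actual PDMP against a version with rates frozen at the start of each $\epsilon$-block, and invokes a uniform large-deviation estimate for the frozen, time-homogeneous chemical chain (Lemma~\ref{molofaccio}); this discretization scheme is then re-used essentially verbatim for the LDP. Your martingale approach is the standard averaging technique and, where it applies, is certainly cleaner than the paper's bookkeeping.

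However, there is a real regularity gap. Assumption (A4) only requires each $F_\s(\cdot,t)$ to be \emph{locally Lipschitz} in $x$, not $C^1$. You correctly argue that the corrector $\psi_\s(\eta,x,t)$ is $C^1$ in $(x,t)$ (finite-dimensional Fredholm alternative plus (A3) plus the Implicit Function Theorem, as in Appendix~\ref{misciotto}), but your auxiliary function $g(\eta,x,t)=\sum_\s F_\s(x,t)\psi_\s(\eta,x,t)$ inherits from $F_\s$ only Lipschitz regularity in $x$. Consequently $\nabla_x g$ exists only a.e.\ (Rademacher), and the Dynkin/extended-generator identity you write down, with the term $\int_0^t F_{\s(s)}\cdot\nabla_x g\,ds$, needs a careful justification via Davis's characterization of the extended generator for PDMPs (absolute continuity of $t\mapsto g(\phi_t x,\s,t)$ along the flow) together with the a.e.\ chain rule for Lipschitz functions along a Lipschitz flow. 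This can likely be made rigorous, and the $\lambda^{-1}\int_0^t(\partial_s g + F\cdot\nabla_x g)\,ds$ term is still $O(\lambda^{-1})$ because $|\nabla_x g|$ is bounded a.e.\ by the Lipschitz constant, but as written you silently treat $g$ as $C^1$ in $x$, which it is not under (A4). The paper's discretization avoids this issue entirely, never differentiating $F_\s$ in $x$, which is precisely what lets it work under the minimal Lipschitz hypothesis. If you are willing to strengthen (A4) to $F_\s\in C^1(\bbR^d\times[0,T])$, your argument closes with no further issues; otherwise you must supply the extended-generator/Rademacher argument explicitly.
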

%Note that from the continuity result \ref{kuntaconti}, \eqref{lim1}
%is a consequence of \eqref{lim2}.
Note that the  PDMP with law $P
^\l_{x_0, \s_0}$ has paths in the Shorohod
 space $D([0,T], \bbR^d \times \G )$ but, as clear from the above
 statement, in order to describe the asymptotic behavior as $\l
 \uparrow \infty$ it is necessary to think the mechanical evolution
 $x(t)$ up to time $T$ as an element of the space $C[0,T]$ endowed with the uniform
 norm, and to identify  the chemical evolution $\s(t)$ to the
 measure--valued vector
 $\r(t)dt \in \cM[0,T]^\G$, where $\r_\s(t)= \chi (\s(t)=\s)$ and   $\cM[0,T]$  denotes the space of
finite nonnegative  Borel measures on
 $[0,T]$ endowed of the weak  topology. This means that
$\mu_n \rightarrow \mu$ if and only if $\mu _n (f) \rightarrow \mu
(f)$ for each $f\in C[0,T]$, where $\mu(f):= \int_0 ^T f(y) \mu (dy
)$ for a generic measure $\mu$. Having in mind the above topologies,
the law of large numbers states that the mechanochemical evolution
$\bigl( x(t), \r(t)dt \bigr)$ converges in probability to $\bigl(
x_*(t), \mu( \cdot|x_*(t),t) dt \bigr)$ as elements of the space $
C[0,T]\times \cM [0,T]^\G$.
%In what follows, we call $\bbQ^\l$ the
%law of the process $\bigl( x(t), y(t)dt \bigr) \in C[0,T]\times \cM
%[0,T]^\G$ induced by the process $\bigl( x(t), \s(t) \bigr) \in
%D\bigl([0,T], \O\times \G\bigr)$ with law $\bbP^\l$.

\smallskip

\subsection{Large deviation principle}
In order to state the large deviation principle for the above PDMP,
it is convenient to isolate a special subset of $C[0,T]\times
\cM[0,T]^\G$. To this aim we introduce the set  $L[0,T]$ of Lebesgue
measurable functions $f:[0,T]\rightarrow [0,1]$, identified up to
subsets of zero Lebesgue measure. Then we define $\Upsilon $ as
\begin{multline}\label{Uil}
\Upsilon :=\Bigl \{ (x,\rho) \in C[0,T]\times L[0,T]^\G\,:\, \sum
_{\s \in \G} \rho _\s (\cdot)= 1\text{ a.e. }\,, \\ x(t)=x_0
+\sum_{\s \in \G} \int_0 ^t  F_\s (x(s),s) \rho_\s (s) ds ,\;\;
\forall t \in [0,T] \Bigr\}\,,
\end{multline}
where $x_0$ is the starting mechanical state of the system (recall
that the system starts in a deterministic   state $(x_0, \s _0)$ at
time $0$).

The set $\Upsilon $ has to  be thought of  as topological subspace
of $C[0,T]\times\cM[0,T]^\G$  via the identification
$$\bigl(x(t),
\rho _\s(t) \bigr)_{t\in [0,T], \s \in \G }\rightarrow \bigl( x(t) ,
\rho_\s (t) dt \bigr)_{t\in [0,T], \s \in \G}\,.$$
 It can be proved
(see Lemma \ref{annozero} in the Appendix) that $\Upsilon$ is a
compact subspace of $C[0,T]\times \cM[0,T]^\G$, and its topology can
be derived from the metric $d$ defined as
\begin{equation}\label{sara}
d\left( \,(x, \rho)\,,\,(\bar x,\bar  \rho ) \,\right) = \|x-\bar x
\|_\infty +\sum _{\s \in \G} \Bigl( \sup _{0\leq t \leq T}
\Bigl|\int _0 ^t \bigl[ \rho_\s(s) -\bar \rho_\s (s) \bigr]ds
\Bigr|\Bigr)\,.
\end{equation}
It is clear that for each stochastic evolution $\bigl(x(t),
\s(t)\bigr)_{t \in [0,T]}$ of the system, the path
\begin{equation}\label{solarium}
\bigl(x(t) ,
\chi (\s(t)=\s) \bigr)_{t\in [0,T], \s \in \G}
\end{equation} is an element of
$\Upsilon$. In what follows we call $Q^\l_{x_0, \s_0} $ the law on
$\Upsilon$ of the random path (\ref{solarium}), when $\bigl(x(t), \s(t)\bigr)_{t \in
[0,T]}$ is chosen with law $P^\l_{x_0,\s_0}$.
% i.e. it is a PDMP
%starting in $(x_0,\s_0)$ with characteristics $\left( F ,p , \l\g
%\right) $.

\smallskip
Before stating our second main result we introduce some notation: we
set
\begin{equation}\label{wc}
W:=\{ (\s,\s')\in \G\times \G\,:\, \s \not = \s'\}\,,
\end{equation}
and, given a nonnegative measure $\p$ on $\G$ and nonnegative
numbers $r(\s,\s')$, with $(\s,\s')\in W$, we define the function
$j(\p, r)$  as
\begin{equation}\label{pranzo}
j(\p, r):=\sup _{z\in (0,\infty)^\G} \sum _{(\s.\s')\in W } \p(\s)
r(\s,\s')\left [ 1-\frac{z_{\s'}}{z_\s } \right] \,,
\end{equation}
i.e.
\begin{equation}\label{pranzobis}
j(\p,r) := \sup _{z \in (0, \infty)^\G} -\sum _{\s \in \G} \p(\s)
\frac{ (\cL z)_\s }{z_\s}
\end{equation}
where $\cL$ is the generator of the continuous--time Markov chain on
$\G$ jumping from $\s$ to $\s'\not=\s$ with rate $r(\s,\s')$. We
can finally state our large deviation principle (LDP):

\begin{Th} \label{LDP} %{\bf Large deviations principle.}
 %Suppose that (i) the characteristics $\left(\{F_\s\}_\s,
%\{p_\s\}_\s, \{ \g _\s\}_\s \right)$ are $C^1$ function in
%$x,t$,   locally Lipschitz w.r.t. $x$ uniformly in $t$, (ii)
Given $(x_0,\s_0 )\in \bbR^d \times \G$,
  the family of probability measures $Q^\l_{x_0, \s_0}$ on $\Upsilon$
satisfies a LDP with parameter $\l$ and with rate function
$J:\Upsilon \rightarrow [0,\infty)$ defined as
\begin{equation}\label{variazione1}
J  ( x, \rho )= \int _0 ^T j \bigl( \rho(t), r(\cdot,\cdot|x(t),t)
\bigr) dt
\end{equation}
where $j$ has been defined in \eqref{pranzo}, \eqref{pranzobis}.

 Moreover, if the quasistationary measures $\mu(\cdot|x,t)$
are reversible for the chemical generators $L_c \bigl(x,t\bigr)$,
then
\begin{equation}\label{variazione2}
J (x, \rho) =-\int_0 ^T  \left< \sqrt{    \frac{\rho_\cdot  (t)
}{\mu(\cdot |x(t),t)}}, L_c \bigl( x(t),t \bigr )   \sqrt{
\frac{\rho_\cdot (t) }{\mu(\cdot |x(t),t)}  } \right>_t dt
\end{equation}
where $<\cdot,\cdot >_t$ denotes the scalar product in $L^2\left(\G,
\mu(\cdot|x(t),t)\right)$.
\end{Th}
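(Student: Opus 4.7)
The proof follows the Freidlin--Wentzell / Donsker--Varadhan pattern, adapted to the fully coupled PDMP: we treat the accelerated chemical dynamics as the fast process producing the large deviations, while the mechanical trajectory is determined deterministically from the chemical occupation density via the ODE built into the definition of $\Upsilon$. Since Lemma \ref{annozero} already gives compactness of $\Upsilon$, I do not need a separate exponential tightness step; it suffices to prove the upper bound on closed sets and the lower bound on open sets of $\Upsilon$.

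\medskip

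The principal tool is a family of exponential martingales. For any $u\in C^1(\G\times[0,T])$, setting $z_\s(t)=e^{u(\s,t)}$, the process
$$
M^u_t=\exp\!\Bigl\{u(\s(t),t)-u(\s_0,0)-\int_0^t\bigl[\partial_s u+e^{-u}L^\l_s e^u\bigr](\s(s),x(s),s)\,ds\Bigr\}
$$
is a mean--one $P^\l_{x_0,\s_0}$--martingale (the additional mechanical term $F_\s\cdot\nabla_x e^u$ vanishes because $u$ does not depend on $x$). On $(x,\rho)\in\Upsilon$ distributed as $Q^\l_{x_0,\s_0}$, the chemical part gives
$$
e^{-u}L^\l_c e^u(\s,x,s)=\l\sum_{\s'\neq\s}r(\s,\s'|x,s)\Bigl(\tfrac{z_{\s'}(s)}{z_\s(s)}-1\Bigr),
$$
which, upon averaging against $\rho$, reproduces the expression inside the supremum defining $j$ in \eqref{pranzo}. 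The ratio $z_{\s(T)}(T)/z_{\s_0}(0)$ and the time--derivative term $\partial_s u$ are uniformly bounded on compacts, so they do not contribute to the logarithmic asymptotics.

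\medskip

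For the upper bound, applying Chebyshev to $M^u_T$ for a suitable finite collection of test functions $u$, together with continuity of $r$ in $x$ (Lemma implied by (A3)) and continuity of the map $(x,\rho)\mapsto\int_0^T\sum_{\s,\s'}\rho_\s(s)r(\s,\s'|x(s),s)[1-z_{\s'}(s)/z_\s(s)]\,ds$ with respect to the metric \eqref{sara}, yields
$$
\limsup_{\l\uparrow\infty}\tfrac{1}{\l}\log Q^\l_{x_0,\s_0}(C)\le-\inf_{(x,\rho)\in C}J(x,\rho)
$$
for any closed $C\subset\Upsilon$, after a minimax argument (justified by compactness of $\Upsilon$ and lower semicontinuity of $J$) that converts a sup of integrals into the integral of $j$ pointwise. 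For the lower bound, given a target $(x^*,\rho^*)\in\Upsilon$ with $J(x^*,\rho^*)<\infty$, I first approximate $\rho^*$ by paths with smooth positive coordinates (so that the pointwise sup in $j(\rho^*(t),r(\cdot,\cdot|x^*(t),t))$ is attained at some $z^*_\s(t)>0$ continuous in $t$), and then consider the PDMP with tilted rates $\tilde r(\s,\s'|x,t)=r(\s,\s'|x,t)\,z^*_{\s'}(t)/z^*_\s(t)$. The associated law $\widetilde Q^\l$ is absolutely continuous w.r.t.\ $Q^\l$ with Radon--Nikodym derivative $1/M^{u^*}_T$ where $u^*=\log z^*$, and the quasi--stationary measure of $\tilde L_c(x^*(t),t)$ on $\G$ is precisely $\rho^*(t)$. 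Applying the averaging principle (Theorem \ref{LLN}) to this tilted PDMP shows that under $\widetilde Q^\l$ the random path concentrates near $(x^*,\rho^*)$, so that for any open $O\ni(x^*,\rho^*)$
$$
\liminf_{\l\uparrow\infty}\tfrac{1}{\l}\log Q^\l_{x_0,\s_0}(O)\ge-\int_0^T\!\sum_{\s,\s'}\rho^*_\s(t)r(\s,\s'|x^*(t),t)\Bigl(1-\tfrac{z^*_{\s'}(t)}{z^*_\s(t)}\Bigr)dt=-J(x^*,\rho^*).
$$
I expect the main technical obstacle to lie here: producing, for an arbitrary $(x^*,\rho^*)\in\Upsilon$ with $J<\infty$, a tilting function $z^*$ that both achieves the supremum in \eqref{pranzo} and is regular enough for the tilted PDMP to admit a genuine averaging theorem and for \eqref{sara}--concentration to follow. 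This forces an approximation argument using lower semicontinuity of $J$ and density of smooth feasible paths in $\{J<\infty\}$.

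\medskip

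Finally, for the reversible case I use detailed balance $\mu(\s|x,t)r(\s,\s'|x,t)=\mu(\s'|x,t)r(\s',\s|x,t)$ to rewrite the expression optimized in \eqref{pranzobis}: for each fixed $(x,t)$ and $\pi=\rho(t)$ one checks, by symmetrizing the sum over $(\s,\s')$, that the supremum in \eqref{pranzo} is achieved at $z_\s=\sqrt{\rho_\s(t)/\mu(\s|x(t),t)}$ (standard Donsker--Varadhan calculation), and the optimal value equals $-\bigl\langle\sqrt{\rho/\mu},L_c(x,t)\sqrt{\rho/\mu}\bigr\rangle_t$, which is non--negative by self--adjointness of $L_c(x,t)$ in $L^2(\mu(\cdot|x,t))$. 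Integrating over $t\in[0,T]$ produces \eqref{variazione2}.
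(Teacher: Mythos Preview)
Your proposal is correct and follows essentially the same route as the paper: the exponential martingale $M^u$ is exactly the Radon--Nikodym derivative $dQ^\l/dQ^{\l,V}$ with $V=u$ that the paper computes in \eqref{RN}--\eqref{speriamoinbene}; the upper bound via Chebyshev plus a minimax on the compact $\Upsilon$ matches the paper's use of Lemma~3.3 in \cite{KL}; the lower bound via tilting to rates $r(\s,\s'|x,s)\,z^*_{\s'}/z^*_\s$ and invoking Theorem~\ref{LLN} for the tilted process is precisely the paper's argument, with your ``smooth positive'' approximants playing the role of the dense set $\cB$ of Lemma~\ref{ucraina}; and the reversible identity is handled exactly as in Section~\ref{var_problema}.

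One small correction: in the final approximation step you invoke \emph{lower} semicontinuity of $J$, but that gives the wrong inequality (it would only yield $\liminf J(x^{(n)},\rho^{(n)})\ge J(x,\rho)$, whereas you need $\limsup\le$ to pass from $\inf_{O\cap\cB}J$ to $\inf_O J$). What is actually required, and what the paper proves in \eqref{arrivederci} via the continuity of $\cJ$ on $[0,\infty)^W$ (Lemma~\ref{pierbau}, using convexity and Rockafellar's theorem) together with dominated convergence, is that $J(x^{(n)},\rho^{(n)})\to J(x,\rho)$ along the specific mollified approximating sequence. You correctly flagged this step as the main technical obstacle; just be aware that lower semicontinuity alone does not close it.
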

%Note that when the quasistationary measure $\mu(\cdot|x(s),s)$ is reversible
%for the chemical generator $L_c \bigl(x(s),s\bigr)$, then
%\begin{equation}\label{variazione2}
%J (x, \rho) =-\int_0 ^T  \left< \sqrt{    \frac{\rho_\cdot  (s)
%}{\mu(\cdot |x(s),s)}}, L_c \bigl( x(s),s \bigr )   \sqrt{
%\frac{\rho_\cdot (s) }{\mu(\cdot |x(s),s)}} \right>_{\mu(\cdot |x(s),s)}ds
%\end{equation}
%where $<\cdot >_{\mu(\cdot)}$ denotes the scalar product in $L^2\left(\G,
%\mu(\cdot)\right)$, i.e. formula \eqref{variazione2} reads
%\begin{equation}\label{variazione3}
%J(x,\rho)= - \int_0^T  \sum_{\s \in \G} \sum _{\s' \in \G}\mu(\s |x(s),s)
%\sqrt{ \frac{\rho_\s  (s) }{\mu(\s |x(s),s)}} \left[L_c \bigl( x(s),s
%\bigr )\right]_{\s,\s'}   \sqrt{ \frac{\rho_{\s'} (s) }{\mu(\s'
%|x(s),s)}}ds \,,
%\end{equation}
%where we called
%\begin{equation}
%\left[L_c \bigl( x(s),s
%\bigr )\right]_{\s,\s'}=\left\{
%\begin{array}{lc}
%r(\s,\s'|x(s),s) & (\s,\s')\in W\\
%-\g_\s(x(s),s) & \s=\s'\, .\\
%\end{array}
%\right.\nonumber
%\end{equation}

We recall that the above LDP means that
\begin{align}
& \limsup _{\l \uparrow \infty} {1 \over \l} \log Q^\l_{x_0, \s_0}
(C) \leq - J(C)\,,
\qquad \forall C\subset \Upsilon \text{ closed}\,,\label{spiegelupper}\\
&  \liminf_{\l \uparrow \infty} {1 \over \l}\log Q^\l_{x_0,\s_0} (O)
\geq - J(O)\, , \qquad \;\forall O \subset \Upsilon \text{ open}\,,
\label{spiegeldown}
\end{align}
where
$$ J(S)= \inf _{(x,\rho)  \in S} J(x,\rho)\,, \qquad S \subset
\Upsilon\,.
$$
Moreover, the function $J$ must be a lower semi--continuous function
such that $J \not \equiv \infty$.

\smallskip We point out that $j\bigl( \,\rho(t), r(\cdot,\cdot|x(t),t)\, \bigr) $ corresponds to the
large deviation functional in $\rho(t) $ of the empirical measure
associated to the time--homogeneous and continuous--time Markov
chain on $\G$ jumping from $\s$ to another chemical state  $\s'$
with transition rate $r(\s, \s' |x(t),t)$, $t$ being thought of as a
fixed parameter here \cite{dH}. As the reader will observe, this LDP
for time--homogeneous Markov chains will be one of the main
ingredients in our proof of Theorem \ref{LLN}. Other details on the
variational problem (\ref{pranzo}) will be given in Section
\ref{var_problema}.

In the case of two chemical states, since we know the form of the
quasistationary measure (see (\ref{patroclo})) and that it is
reversible w.r.t. the  chemical generator $L_c$, the above theorem
implies:

\begin{Cor}
In the case of two chemical states,  $\G=\{0,1\}$, the family of
probability measures $Q^\l_{x_0,\s_0}$ on $\Upsilon$  satisfies a
large deviation principle with parameter $\l$ and with rate function
$J:\Upsilon \rightarrow [0,\infty]$ given by
\begin{equation}\label{variazione4}
J(x,\rho)=\int_0^T
\left[\sqrt{\rho_0(t)\g_0\left(x(t),t\right)}-\sqrt{\rho_1(t) \g_1
\left(x(t),t\right)}\right]^2dt\,.
\end{equation}
\end{Cor}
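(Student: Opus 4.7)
The plan is to specialize formula \eqref{variazione2} from Theorem \ref{LDP} to the two-state case, which requires only an elementary algebraic simplification. First I would record that the hypotheses of the reversible part of the theorem are met: since $p(0,1|x,t)=p(1,0|x,t)=1$, the chemical rates are $r(0,1|x,t)=\g_0(x,t)$ and $r(1,0|x,t)=\g_1(x,t)$, and the quasistationary measure \eqref{patroclo} satisfies the detailed balance identity
\begin{equation*}
\mu(0|x,t)\,\g_0(x,t) \;=\; \mu(1|x,t)\,\g_1(x,t) \;=\; \frac{\g_0(x,t)\,\g_1(x,t)}{\g_0(x,t)+\g_1(x,t)}\,,
\end{equation*}
so $\mu(\cdot|x,t)$ is reversible for $L_c(x,t)$, as already noted in the excerpt.

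Next, fix $t\in[0,T]$ and set $h_\s := \sqrt{\rho_\s(t)/\mu(\s|x(t),t)}$ for $\s\in\{0,1\}$ (suppressing the $t$ and $x(t)$ arguments). Since $L_c h(0)=\g_0(h(1)-h(0))$ and $L_c h(1)=\g_1(h(0)-h(1))$, a direct expansion of the scalar product in $L^2(\G,\mu(\cdot|x(t),t))$ yields
\begin{equation*}
-\langle h, L_c h\rangle_t \;=\; -\mu(0)\g_0\,h(0)\bigl(h(1)-h(0)\bigr) - \mu(1)\g_1\,h(1)\bigl(h(0)-h(1)\bigr)\,,
\end{equation*}
and using the common value $\mu(0)\g_0=\mu(1)\g_1=\g_0\g_1/(\g_0+\g_1)$ from detailed balance, this collapses to
\begin{equation*}
-\langle h, L_c h\rangle_t \;=\; \frac{\g_0\g_1}{\g_0+\g_1}\bigl(h(0)-h(1)\bigr)^2\,.
\end{equation*}

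Finally, I would substitute the explicit form of $\mu(\cdot|x,t)$ to see that
\begin{equation*}
h(0)-h(1) \;=\; \sqrt{\rho_0\,\frac{\g_0+\g_1}{\g_1}}\;-\;\sqrt{\rho_1\,\frac{\g_0+\g_1}{\g_0}} \;=\; \sqrt{\frac{\g_0+\g_1}{\g_0\g_1}}\,\bigl(\sqrt{\rho_0\g_0}-\sqrt{\rho_1\g_1}\bigr)\,,
\end{equation*}
so that the prefactor $\g_0\g_1/(\g_0+\g_1)$ cancels exactly and
\begin{equation*}
-\langle h, L_c h\rangle_t \;=\; \bigl(\sqrt{\rho_0(t)\g_0(x(t),t)}-\sqrt{\rho_1(t)\g_1(x(t),t)}\bigr)^2\,.
\end{equation*}
Integrating over $t\in[0,T]$ and invoking \eqref{variazione2} gives the formula \eqref{variazione4}. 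There is no real obstacle here: once Theorem \ref{LDP} is granted, the entire argument is the two-line computation above, and the only thing to check is that the explicit measure in \eqref{patroclo} is indeed reversible, which is immediate.
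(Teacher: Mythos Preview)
Your proposal is correct and follows exactly the route the paper intends: the paper simply states that the corollary follows from Theorem~\ref{LDP} via the reversible formula \eqref{variazione2} together with the explicit quasistationary measure \eqref{patroclo}, and you have filled in the short computation that makes this explicit.
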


\smallskip

\subsection{Coarse--grained process}\label{luna}

We finally consider  the PDMP with fast and slow chemical jumps and
study  the asymptotic  behavior of the  coarse--grained process
obtained from the original one by keeping knowledge of the
mechanical state and only of the chemical  metastate of the system.
More precisely, we consider a partition of $\G$, $\Gamma= \G_1 \cup
\G_2\cup \dots \cup \G_\ell$ where $|\G_i|\geq 1$.  We rename the
elements of $\G$ by calling $\s_{i,1}, \s_{i,2}, \dots , \s_{i,
n_i}$ the elements of $\G_i$. The probability $P^\l_{x_0,\s_0}$ is
now the law of the PDMP starting in $(x_0, \s_0)$ with generator
\begin{equation}\label{generarexxx}
L_t g (x,\s) = F_\s (x,t) \cdot \nabla_x g(x, \s) +
% \sum _{i=1}^n
 \sum _{\s' \in \G: \s'\not = \s} \l(\s,\s') r(\s,\s'|x,t) \bigl (
g (x,\s') - g(x, \s) \bigr)\,,
\end{equation}
where
$$ \l (\s,\s'):=
\begin{cases}
\l & \text{ if } \s, \s' \in \G_i \text{ for some $i$}\,, \\
1 & \text{ otherwise}\,.
\end{cases}
$$
Note that  the chemical jumps between states in the same chemical
class $\G_i$ take place in a short time of order $O(1/\l)$, while
chemical jumps between states in different chemical classes take
place in times of order $O(1)$. Hence, it is natural to call
 the  classes
$\G_i$ {\sl chemical  metastastes}. Below, we denote by
$$ \G^{(\ell)}:= \{1,2,\dots,\ell\}
$$
the family of metastates $\G_i$, writing $i$ for the metastate $\G_i$.
 When the
rates $r(\s,\s'|x, t)$ do not depend on the mechanical state $x$,
the chemical evolution is determined by a time--inhomogeneous Markov
chain on $\G$ with strong and weak interactions and this situation
has been studied in detail \cite{YZ}. We consider here the
fully--coupled case where the transition rates of the chemical jumps
depend on the mechanical state. As discussed in \cite{YZ}[Chapter
7], it is simple to give examples where  $P^\l _{x_0,\s_0}$ does not
converge weakly as $\l \uparrow \infty$ since the family $ \{P^\l
_{x_0, \s_0}\,:\, \l>0\}$ is not tight. Nevertheless, one can obtain
from $P^\l_{x_0,\s_0}$ a $\l$--dependent coarse--grained process
weakly converging to a new PDMP. In order to describe precisely this
result it is convenient to fix some notation. We can write the
chemical generator $L_c (x,t)$ as
\begin{equation}
L_c (x,t) = \widehat L _c (x,t)  + \l \widetilde L_c (x,t) \,,
\end{equation} where $\widehat L _c (x,t)$ and  $ \widetilde L_c (x,t)$ are
both $\l$--independent Markov generators on  $\G$ parametrized by
$x$ and $t$. Trivially, $\widetilde L_c (x,t)$ has a diagonal--block
form:
$$
\widetilde L_c (x,t)= \left(
\begin{array}{cccc}
\widetilde L_c^1 (x,t)   &   &  & \\
 & \widetilde L_c ^2 (x,t)  & &\\
       &                      &  \ddots & \\
       &                       &        &  \widetilde L ^\ell _c (x,t)
\end{array}
\right)\,,
% = \text{diag} \left( \widetilde L_c ^1 (x,s), \widetilde L_c ^2
%(x,s), \dots, \widetilde L_c ^\ell (x,s) \right)\,,
$$
where $\widetilde L^i _c (x,t)$ is a Markov generator on $\G_i$.

\smallskip

   We can now state our assumptions. We keep our previous
assumptions (A1), (A3) and (A4), while we replace  assumption (A2) with
the following  (A2'):

\begin{itemize}

\item Assumption (A2'):  for each $i\in \G^{(\ell)}  $ and $(x,t ) \in \bbR^d \times [0,T]$,  the generator $\widetilde L_c ^i (x,t)$ is
irreducible on $\G_i$.

\end{itemize}
Motivated by assumption (A2'),  we write $\mu_i (\cdot |x,t)$ for
the unique invariant  probability  measure on $\G_i$, i.e. $\mu_i
(\cdot|x,t)$ is the quasistationary measure for the generator
$\widetilde L_c ^i (x,t)$.
Given $i \in \G^{(\ell)}$ and $(x,t)\in \bbR^d \times [0,T]$ we
define the vector field
\begin{equation}\label{babbol}
 F_i (x,t):= \sum _{\s \in \G_i}
F_\s(x,t) \mu _i (\s |x,t)\,.
\end{equation}

 Given a state $\s \in \G$ we define  $\a (\s)$ as
$\a(\s)=i$ if $\s \in \G_i$, moreover we denote by $R_{x_0, \s_0}
^\l$ the law on $D\bigl([0,T], \bbR^d\times  \G^{(\ell)} \bigr)$
obtained as image of $P_{x_0, \s_0}^\l$ under the map
%$$ D([0,T], \Omega \times \G ) \ni (x(t), \s(t)) \rightarrow \bigl(\,x(t),
%\alpha ( \s(t) ) \, \bigr) \in D([0,T], \Omega \times \{1, 2, \dots, n\} )\,.
%$$
$(x(t), \s(t)) \rightarrow \bigl(\,x(t), \alpha ( \s(t) ) \, \bigr)
$  (we will often write $\a(t)$ in place of $\a(\s(t))$). $R_{x_0,
\s_0} ^\l$ is the law of the above mentioned coarse--grained
process. Note that in general this process is not Markovian, however
it converges to a PDMP:
\begin{Th}\label{LLNws}
Given $T>0$, as $\l $ goes to $\infty$ the law $R_{x_0, \s_0} ^\l$
weakly converges to the law of the  PDMP on $\bbR^d \times
\G^{(\ell)}$ with generator
\begin{equation}\label{bigbang}
L_s g (x,i ) = F_i  (x,s) \cdot \nabla_x g(x, i) +
% \sum _{i=1}^n
 \sum _{j \not = i } r (i,j |x,s) \bigl (
g (x,j) - g(x, i) \bigr)\,,
\end{equation}
where, given $1\leq i \not = j \leq \ell$,
\begin{equation}\label{teramane}
 r(i,j|x,s):= \sum _{\s\in
\G_i} \sum _{\s' \in \G_j} \mu_i (\s|x,s) r(\s,\s'|x,s)\,.
\end{equation}
\end{Th}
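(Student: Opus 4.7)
The plan is to prove the theorem in three steps: tightness of $\{R^\lambda_{x_0,\s_0}\}_{\lambda>0}$, identification of any subsequential weak limit $R$ via a martingale problem, and uniqueness of the limiting PDMP. For tightness, (A4) and Gronwall give a $\lambda$-independent a.s. bound $|x(t)|\leq C_T$ on $[0,T]$ together with a $\lambda$-independent Lipschitz constant, so the laws of $x(\cdot)$ are tight in $C([0,T],\bbR^d)$. For the metastate coordinate $\a(t):=\a(\s(t))$, only inter-metastate jumps (whose total rate is bounded by $\sup r(\s,\s'|x,s)$ on $\{|x|\leq C_T\}\times[0,T]$, independently of $\lambda$) contribute, so the number of jumps of $\a$ in $[0,T]$ is stochastically dominated by a Poisson variable uniformly in $\lambda$; Aldous' criterion then yields tightness of $\a(\cdot)$ in $D([0,T],\G^{(\ell)})$.

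For the martingale characterization, fix $g\in C^1_c(\bbR^d\times\G^{(\ell)})$ and set $\bar g(x,\s):=g(x,\a(\s))$. Since $\bar g(x,\cdot)$ is constant on each $\G_i$, the $\lambda$-scaled intra-metastate transitions in \eqref{generarexxx} contribute nothing to $L^\lambda_s\bar g$, so
\begin{equation*}
L^\lambda_s\bar g(x,\s)=F_\s(x,s)\cdot\nabla_x g(x,\a(\s))+\sum_{j\neq\a(\s)}\sum_{\s'\in\G_j}r(\s,\s'|x,s)\bigl[g(x,j)-g(x,\a(\s))\bigr].
\end{equation*}
Hence $M^\lambda_g(t):=\bar g(x(t),\s(t))-\bar g(x_0,\s_0)-\int_0^t L^\lambda_s\bar g(x(u),\s(u))\,du$ is a $P^\lambda_{x_0,\s_0}$-martingale. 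Subtracting the candidate martingale $M_g(t)$ defined by $L_s g$ yields $M^\lambda_g(t)-M_g(t)=\int_0^t\Phi_g(x(u),\s(u),u)\,du$, where $\Phi_g$ encodes the fluctuations $F_{\s}-F_{\a(\s)}$ and $\sum_{\s'\in\G_j}r(\s,\s'|\cdot,\cdot)-r(\a(\s),j|\cdot,\cdot)$, each of zero mean against $\mu_{\a(\s)}(\cdot|x,u)$ by \eqref{babbol} and \eqref{teramane}.

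The central step is the averaging claim that, for every bounded continuous $h:\bbR^d\times[0,T]\times\G^{(\ell)}\to\bbR$ and every $\phi:\G\to\bbR$,
\begin{equation*}
\lim_{\lambda\to\infty}E^\lambda_{x_0,\s_0}\Bigl|\int_0^T h(x(u),u,\a(u))\bigl[\phi(\s(u))-\bar\phi_{\a(u)}(x(u),u)\bigr]\,du\Bigr|=0,
\end{equation*}
with $\bar\phi_i(x,u):=\sum_{\s\in\G_i}\mu_i(\s|x,u)\phi(\s)$. I would prove this by conditioning on the inter-metastate jump times $0=\tau_0<\tau_1<\cdots<\tau_N\leq T$: on each excursion $[\tau_k,\tau_{k+1})$ with $\a\equiv i_k$, the conditional law of $(x(\cdot),\s(\cdot))$ is that of a PDMP on $\bbR^d\times\G_{i_k}$ driven by $F$ and by the fast generator $\lambda\widetilde L_c^{i_k}(x,u)$, with an additional killing at the uniformly bounded rate $\sum_{j\neq i_k}\sum_{\s'\in\G_j}r(\cdot,\s'|x,u)$, which is a $\lambda$-subdominant perturbation of the fast generator. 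Theorem \ref{LLN}, applied to excursions of length at least $\epsilon$, gives the averaging; excursions of length below $\epsilon$ contribute at most $\|h\|_\infty\|\phi\|_\infty\epsilon(N+1)$, which vanishes on taking $\lambda\to\infty$ and then $\epsilon\downarrow 0$ because $N$ is stochastically bounded uniformly in $\lambda$. Thus $M^\lambda_g(t)-M_g(t)\to 0$ in $P^\lambda$-probability, and $M_g$ is an $R$-martingale under any subsequential limit $R$.

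Finally, by Appendix \ref{misciotto} each $\mu_i(\cdot|x,s)$ is $C^1$ in $(x,s)$, so the effective characteristics $F_i$ and $r(i,j|\cdot,\cdot)$ are continuous, locally Lipschitz in $x$ and uniformly bounded on compacts; assumptions (A1)--(A4) therefore hold for $L_s$, and the PDMP martingale problem results of \cite{D1,D2} furnish uniqueness, identifying $R$ as the law of the PDMP with generator $L_s$. The hardest step is the averaging on each excursion: the entry law of $\s$ at $\tau_k$ is random and history-dependent and excursions can be arbitrarily short, so one must truncate below to macroscopic length and verify that the $\lambda$-uniform averaging bounds of Theorem \ref{LLN} survive this random conditioning — the killing of the fast chain at rate $O(1)$ being negligible compared to the fast generator of order $\lambda$.
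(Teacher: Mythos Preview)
Your approach is sound and takes a genuinely different route from the paper. After tightness (argued essentially as you do), the paper identifies the limit not via a martingale problem but by directly computing, for each sequence $0<t_1<\cdots<t_n$, metastates $i_1,\dots,i_n$ and tolerances $\e,\d$, the limiting probability of the event $\cC$ that the successive inter-metastate jump times $T_k$ fall in $(t_k-\e,t_k+\e)$ with $\a(\s(T_k))=i_k$ and that $x(\cdot)$ stays $\d$-close to the piecewise-deterministic path driven by the effective fields $F_{i_k}$. The key step rewrites $P^\l_{x_0,\s_0}(\cC)$ as a Feynman--Kac expectation under an auxiliary fast PDMP $\widehat P^\l$ restricted to a single metastate (the inter-metastate rates becoming an exponential weight $e^{-\int b_{\s(s)}\,ds}$), to which Theorem~\ref{LLN} applies directly; iterating via the strong Markov property yields an explicit integral formula for $\lim_\l P^\l(\cC)$ that pins down the limit uniquely. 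Your martingale-problem route is more systematic and avoids this explicit path-density computation, at the price of the averaging lemma. There your phrase ``conditioning on the inter-metastate jump times'' is slightly loose --- conditioning on the exact $\t_k$ produces a bridge, not merely a killed chain --- and what actually works is to couple with the un-killed fast PDMP on $\G_{i_k}$ (the two agree up to the first inter-metastate jump) and invoke the uniform-in-starting-point estimates of Lemma~\ref{funghetto} on a fixed $\e$-grid rather than on the random excursions; this is, in disguise, exactly the paper's Feynman--Kac reformulation \eqref{socrate}--\eqref{acquachiara}. Both arguments therefore rest on the same averaging input, packaged differently: the paper into explicit path probabilities, you into the corrector of a martingale problem.
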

Note that, due to assumptions (A1) and (A4), both the chemical and
mechanical evolutions for the original PDMP and the limiting one are
well defined.

\bigskip

\subsection{Outline of the paper}
 The remaining part of the paper is organized as follows: in
 Section \ref{motori} we discuss some biological applications,
in Section \ref{var_problema} we consider  the variational problem
(\ref{pranzo}): we  show that the r.h.s. of (\ref{variazione1})
equals the r.h.s. of (\ref{variazione2}) if $\mu(\cdot|x(s),s)$ is
reversible w.r.t. $L_c \bigl(x(s),s\bigr)$ and we show other results
useful for the proof of the LDP, in Section \ref{dim_LLN} we prove
the LLN stated in Theorem \ref{LLN}, in Section \ref{dim_LD} we
prove the LDP stated in Theorem \ref{LDP} and in Section
\ref{paccolone} we prove the asymptotic behavior of the
coarse--grained process described in Theorem \ref{LLNws}. Finally,
in the Appendix we prove some technical results used in the paper.

\section{Some biological applications}\label{motori}

In this section we discuss an application of the above averaging
principles for PDMPs to molecular motors. Further results will be
presented in a companion paper \cite{FGR}.

\smallskip

   As  already said, molecular motors (MMs)
     are proteins working as engines on the nanometer scale, they generate forces of  piconewton order
 and are usually powered by the chemical energy derived from ATP hydrolysis. Most of them
are linear motors, i.e. they  proceed in a given direction along some filament, which has a function similar to a railways track. We refer the interested reader
  to \cite{H}.
 %    .
% The power input to these engines is usually provided by ATP hydrolysis.

 In the power stroke picture,  force is generated by the swinging motion of some part of the protein (the  {\sl lever--arm}).
   In order to study the working of the MM, its  state can  be described  simply by the pair $(x,\sigma)$,
   where $x$ is a continuum variable specifying  the configuration of the lever--arm, while
 $\sigma$ is a discrete variable describing  the chemical state of the MM (i.e.   bound to or detached from the track filament,
  bound to ATP or to the hydrolysis products).  Given the chemical state $\s$, the evolution of the mechanical state is
  determined by Newtonian laws as
  \begin{equation}\label{meccanica}
   m \ddot{x}= -\g \dot{x} + F_\s (x,t) + \xi (t) \,,
   \end{equation}
where $\g \dot{x}$ denotes the friction force, $F_\s (x,t)$ is the
force field defined as $F_\s (x,t)= - \nabla_x U_\s (x,t)$, $U_\s
(x,t)$ being the free energy, and $\xi(t)$ denotes the thermal force
due to the environment. Since the inertia effects are negligible and
the dynamics  is overdamped \cite{H}, one can disregard the term $m
\ddot{x}$. In what follows, we will neglect also the thermal force
$\xi (t)$ as first approximation. Indeed, contrary to the  modeling
of MMs as Brownian ratchets, in the power stroke picture the thermal
force  $\xi(t)$ is not relevant for many qualitatively aspects.
Taking $\g=1$ without loss of generality, (\ref{meccanica}) reduced
to equation (\ref{sole}).
 Despite our approximation, the thermal fluctuations remain  essential in the chemical kinetics, since the chemical
jumps are stochastic and must satisfied the detailed balance equation:
\begin{equation}\label{dbe} \frac{ r(\s,\s'|x,t)}{ r(\s',\s |x,t)} = \exp \left\{ - \beta
\bigl[U_{\s'} (x,t)- U_{\s} (x,t)\bigr] \right\}\,,\qquad \forall
\s, \s' \in\G\,,
\end{equation}
where $\b$ is the inverse temperature and, as before,
 $r(\s,\s'|x,t)$ denotes
  the probability rate for a jump from $\s$ to another state $\s'$. Due to the above considerations, the mechanochemical evolution of the MM
  can be described by a PDMP satisfying the detailed balance equation \eqref{dbe}.
  Since  there is experimental evidence that some jump rates must depend on $x$,
  the PDMP is fully--coupled (see \cite{D} and references therein).

In this section we show how
 instabilities in the response to external solicitations  of MMs follow from the multiscale character of the system by means of the averaging principle
 stated  in Theorem \ref{LLNws}. Let us first explain what one means by instabilities (see \cite{J},\cite{VD} for some examples).
In ordinary conditions a MM moves typically in a given direction along the filament, at a given averaged
 speed $v$ depending on environmental parameters as temperature and ATP concentration. If some external force opposes to the motion,
  the MM slows down and eventually stops at the stall force $f_s$.
The observed instability is of the following kind: in some range of the values of the environmental parameters the MM does not stall,
 rather for force values close to $f_s$  it can proceed in both directions along the filament.

\smallskip

% In the language of PDMPs this instability can be understood as an ergodicity breaking phenomenon.
%The aim of this section is twofold: we first show how the averaging principle in the context of PDMP can lead from simple linear force fields to less trivial non linear force fields, we then show how non linear non monotone force fields can lead to ergodicity breaking.
%The specific example comes from the theory of Molecular Motors, in particular form the Myosin Motor \cite{DV03} where one has that the equilibration between the different chemical states in which the motor is attached to the filament is much faster than the attachment/detachment dynamics.
%In our contest this means that transitions in between a subgroup of the chemical states are much faster than transitions between this subgroup and the rest of the chemical states. We shall also suppose that equilibration in between this subgroup is fast with respect to the timescale of mechanical evolution, in order to apply the results of section \ref{}.
%The following discussion will be inspired by the paper by Duke and Vilfan \cite{DV03}, where the interested reader can find all the physical and biological details related to the present problem in its application to Myosin.

%\subsection{From monotone force fields to non monotone force fields.}\label{bolle}

Inspired by \cite{VD}  we study this phenomenon by means of a  PDMP
with three chemical states and  one dimensional
 mechanical variable so that the full state is  described by $(x,\s)\in \bbR\times\Gamma$ where $\Gamma=\{0,1,2\}$.
For the interpretation of the different chemical states we refer to \cite{VD}, we only mention that when $\s=1,2$ the MM is attached to the filament (but
in different ways), while in state $\s=0$ the MM is detached. The force fields are defined as
%Let the force fields which govern the mechanical evolution be:
\begin{equation}
\begin{cases}\label{sante}
F_0(x)=- x\\
F_1(x)=- x-f\\
F_2(x)=-(x-1)-f\,.
\end{cases}
\end{equation}
This is the most simple choice to give the mechanical variable an
equilibrium position in each state. Here $f$ is a control parameter,
representing the external force exerted on the filament, and
therefore on the MM when bound to the filament. These one
dimensional force fields admit three convex potential functions:
\begin{equation}\label{ra}
\begin{cases}
U_0(x)=\frac{1}{2} x^2\\
U_1(x)= \frac{1}{2}x^2+fx\\
U_2(x)= \frac12(x-1)^2+fx+\epsilon
\end{cases}
\end{equation}
such that $-\partial_x U_\s(x)=F_\s(x)$. Here $\epsilon$ is a second
control parameter, which can be related to the ATP concentration.
Finally, the  transition rates $r(\s,\s'|x)$  must satisfy the
detailed balance equation \eqref{dbe} and assumptions (A1)  and (A3).
 There is experimental evidence that the jumps between states $1$ and $2$ are much faster than the other chemical jumps
 and it is reasonable to suppose that
the jumps between states $1$ and $2$ are faster
than   the mechanical evolution (see \cite{D}, \cite{VD} and
references therein). Hence, $\G$ can be partitioned in two chemical
metastates $\{0\}$ and $\{1,2\}$, while the rates $r(1,2|x)$ and
$r(2,1|x)$ can be rescaled by a factor $\l$. We assume these rates to be positive, thus implying assumption (A2') of
Section \ref{luna}.
 Note that the rescaling
does not alter the validity of the detailed balance equation
\eqref{dbe}. By Theorem \ref{LLNws}, as $\l \uparrow \infty$, the
coarse--grained
 PDMP converges weakly to a  new PDMP with state space $\bbR  \times \{0,*\}$ ($0$ refers to the metastate $\{0\}$ and $*$ refers to the
 metastate $\{1,2\}$). The  force field in the metastate $\{0\}$ coincides with $F_0$, while in the metastate $*$ is given by $F_*$ defined as
\begin{equation}
%\begin{cases}
%\widehat F_0(x)=F_0(x)\,, \\
 F_*(x)=\frac{e^{\b \Delta U(x)}}{1+e^{\b \Delta
U(x)}}F_1(x)+\frac{1}{1+e^{\b \Delta U(x)}}F_2(x)\,,
%\end{cases}
\end{equation}
where $\Delta U(x)=U_2(x)-U_1(x)$. Note that assumptions (A2') and (A4') are fulfilled.
%The transition rates of the new
%PDMP can be computed according to the  same Theorem.
It must be noted that while the force fields in \eqref{sante} have a
single equilibrium  point, $F_\s(x)=0$, the force field $ F_*(x)$
may have more than one depending on the value of the control
parameters $\epsilon,\beta$ and  $ f$. To this aim, we proceed as
follows:
% This is easily seen considering the equation:
%\begin{equation}
%\widehat F_*(x)=\frac{e^{\beta\Delta U(x)}}{1+e^{\beta\Delta U(x)}}F_1(x)+\frac{1}{1+e^{\beta\Delta U(x)}}F_2(x)=0.
%\end{equation}
%In this section we explain the derivation of (\ref{blame}).
First we observe that
%\begin{equation*}
%\begin{split}
$$ F_*(x)
%\frac{e^{\beta\Delta U(x)}}{1+e^{\beta\Delta U(x)}}F_1(x)+\frac{1}{1+e^{\beta\Delta U(x)}}F_2(x)=\\
%&=\frac{e^{\beta\Delta U(x)}}{1+e^{\beta\Delta U(x)}}F_1(x)+F_1(x)-F_1(x)+\frac{1}{1+e^{\beta\Delta U(x)}}F_2(x)=\\
=F_1(x)+\frac{1}{1+e^{\beta\Delta U(x)}}\left(F_2(x)-F_1(x)\right)
=-x-f+\frac{1}{1+e^{\beta\Delta U(x)}}\,.
$$
%\end{split}
%\end{equation*}
%Taking the derivative w.r.t. $x$ and  then using (\ref{ra}) to compute $\D U$, we obtain that
%We now derive w.r.t. $x$, and substitute \eqref{ra} for $\Delta U$:
Let us take $f=0$. Since $\D U(x)=U_2(x)-U_1 (x)= -x+1/2+\e$, we obtain
\begin{equation*}
\begin{split}
\partial_x F_*(x)= - 1+\frac{\b y}{(1+y)^2}\,, \qquad y:=
e^{\beta(\frac 12-x+\epsilon)}\,.
\end{split}
\end{equation*}
Let us analyze the
 region of positive slope $$I_+=\{ x \in \bbR\,:\, \partial _x  F_* (x) >0\}= \{ x\,: \b y(x)>(1+y(x))^2\}\,.$$
Since $y>0$, $I_+= \emptyset $ if $\b\leq 4$. If $\b>4$,  then $
I_+$ is the finite interval $(a_-,a_+)$ such that
$$ y(a_\pm )=\frac{\beta-2\mp \sqrt{\beta^2-4\beta}}{2}\,.$$
Note that the r.h.s. is always positive. Since $y(0)=e^{\b
(\frac{1}{2}+\e)}$, $0 \in I_+$ if $\b>4$ and
\begin{equation}\label{blame}
-\frac12+\frac{1}\beta \log\left( \frac{\beta-2-\sqrt{\beta^2-4\beta}}2\right)<\epsilon<-\frac12+\frac{1}\beta
 \log\left( \frac{\beta-2+\sqrt{\beta^2-4\beta}}2\right).
\end{equation}

In conclusion, if $f=0$, $\b>4$ and (\ref{blame}) is satisfied, then
 $\partial _x  F_*(x)>0$ only in a given interval $I_+$ containing the origin, while $\lim _{x \rightarrow \pm \infty} F_* (x)= \mp \infty$.
 Since adding $f$ has the only effect to translate the graph of $F_*$ along the ordinate axis, we
conclude that for a suitable value of $f$ the equation $ F_*(x)=0$
has three solutions $x_-< 0<x_+$ and that
$$  F_* (x) \begin{cases}
>0 & \text{ if } x< x_-\,,\\
<0 & \text{ if }x_- < x< 0\,,\\
>0 & \text{ if } 0<x<x_+ \,,\\
<0 & \text{ if } x>x_+\,.
\end{cases}
$$ See Figure \ref{rino} below.
 Now it is simple to check that if the new PDMP  starts in the
mechanical state $x_0<0$  [$x_0>0$],
 then $x(t)$ eventually enters the absorbing
interval $(x_-,0)$ [$(0,x_+)$]. Due to the description of the power
stroke mechanism (see \cite{FGR}), this implies that typically the
motor moves towards left in the first case and towards  right in the
latter. Hence, for  suitable parameters $\e$ and $ \b$ associated to
ATP concentration and temperature and for suitable external forces
$f$ the MM can move  in both directions of the filament, depending
on the initial configuration of the lever--arm. This behavior is a
form of instability in the response of the MM.  We point out that
for the original PDMP with three  states,
 there is only one absorbing interval given by $(0,1)$  which $x(t)$ eventually enters a.s.
 Hence,
 the instability is related to the ergodicity breaking  of the system.

% GNUPLOT: LaTeX picture with Postscript
\begingroup
  \makeatletter
  \providecommand\color[2][]{%
    \GenericError{(gnuplot) \space\space\space\@spaces}{%
      Package color not loaded in conjunction with
      terminal option `colourtext'%
    }{See the gnuplot documentation for explanation.%
    }{Either use 'blacktext' in gnuplot or load the package
      color.sty in LaTeX.}%
    \renewcommand\color[2][]{}%
  }%
  \providecommand\includegraphics[2][]{%
    \GenericError{(gnuplot) \space\space\space\@spaces}{%
      Package graphicx or graphics not loaded%
    }{See the gnuplot documentation for explanation.%
    }{The gnuplot epslatex terminal needs graphicx.sty or graphics.sty.}%
    \renewcommand\includegraphics[2][]{}%
  }%
  \providecommand\rotatebox[2]{#2}%
  \@ifundefined{ifGPcolor}{%
    \newif\ifGPcolor
    \GPcolorfalse
  }{}%
  \@ifundefined{ifGPblacktext}{%
    \newif\ifGPblacktext
    \GPblacktexttrue
  }{}%
  % define a \g@addto@macro without @ in the name:
  \let\gplgaddtomacro\g@addto@macro
  % define empty templates for all commands taking text:
  \gdef\gplbacktext{}%
  \gdef\gplfronttext{}%
  \makeatother
  \ifGPblacktext
    % no textcolor at all
    \def\colorrgb#1{}%
    \def\colorgray#1{}%
  \else
    % gray or color?
    \ifGPcolor
      \def\colorrgb#1{\color[rgb]{#1}}%
      \def\colorgray#1{\color[gray]{#1}}%
      \expandafter\def\csname LTw\endcsname{\color{white}}%
      \expandafter\def\csname LTb\endcsname{\color{black}}%
      \expandafter\def\csname LTa\endcsname{\color{black}}%
      \expandafter\def\csname LT0\endcsname{\color[rgb]{1,0,0}}%
      \expandafter\def\csname LT1\endcsname{\color[rgb]{0,1,0}}%
      \expandafter\def\csname LT2\endcsname{\color[rgb]{0,0,1}}%
      \expandafter\def\csname LT3\endcsname{\color[rgb]{1,0,1}}%
      \expandafter\def\csname LT4\endcsname{\color[rgb]{0,1,1}}%
      \expandafter\def\csname LT5\endcsname{\color[rgb]{1,1,0}}%
      \expandafter\def\csname LT6\endcsname{\color[rgb]{0,0,0}}%
      \expandafter\def\csname LT7\endcsname{\color[rgb]{1,0.3,0}}%
      \expandafter\def\csname LT8\endcsname{\color[rgb]{0.5,0.5,0.5}}%
    \else
      % gray
      \def\colorrgb#1{\color{black}}%
      \def\colorgray#1{\color[gray]{#1}}%
      \expandafter\def\csname LTw\endcsname{\color{white}}%
      \expandafter\def\csname LTb\endcsname{\color{black}}%
      \expandafter\def\csname LTa\endcsname{\color{black}}%
      \expandafter\def\csname LT0\endcsname{\color{black}}%
      \expandafter\def\csname LT1\endcsname{\color{black}}%
      \expandafter\def\csname LT2\endcsname{\color{black}}%
      \expandafter\def\csname LT3\endcsname{\color{black}}%
      \expandafter\def\csname LT4\endcsname{\color{black}}%
      \expandafter\def\csname LT5\endcsname{\color{black}}%
      \expandafter\def\csname LT6\endcsname{\color{black}}%
      \expandafter\def\csname LT7\endcsname{\color{black}}%
      \expandafter\def\csname LT8\endcsname{\color{black}}%
    \fi
  \fi
  \setlength{\unitlength}{0.0500bp}%
  \begin{picture}(7200.00,5040.00)%
    \gplgaddtomacro\gplbacktext{%
      \csname LTb\endcsname%
      \put(990,1072){\makebox(0,0)[r]{\strut{}-0.4}}%
      \put(990,1895){\makebox(0,0)[r]{\strut{}-0.2}}%
      \put(990,2718){\makebox(0,0)[r]{\strut{} 0}}%
      \put(990,3541){\makebox(0,0)[r]{\strut{} 0.2}}%
      \put(990,4364){\makebox(0,0)[r]{\strut{} 0.4}}%
      \put(1122,440){\makebox(0,0){\strut{}-1}}%
      \put(2548,440){\makebox(0,0){\strut{}-0.5}}%
      \put(3974,440){\makebox(0,0){\strut{} 0}}%
      \put(5400,440){\makebox(0,0){\strut{} 0.5}}%
      \put(6826,440){\makebox(0,0){\strut{} 1}}%
      \csname LTb\endcsname%
      \put(220,2718){\rotatebox{90}{\makebox(0,0){\strut{}$F_*(x)$}}}%
      \put(3974,110){\makebox(0,0){\strut{}$x$}}%
    }%
    \gplgaddtomacro\gplfronttext{%
    }%
    \gplbacktext
    \put(0,0){\includegraphics{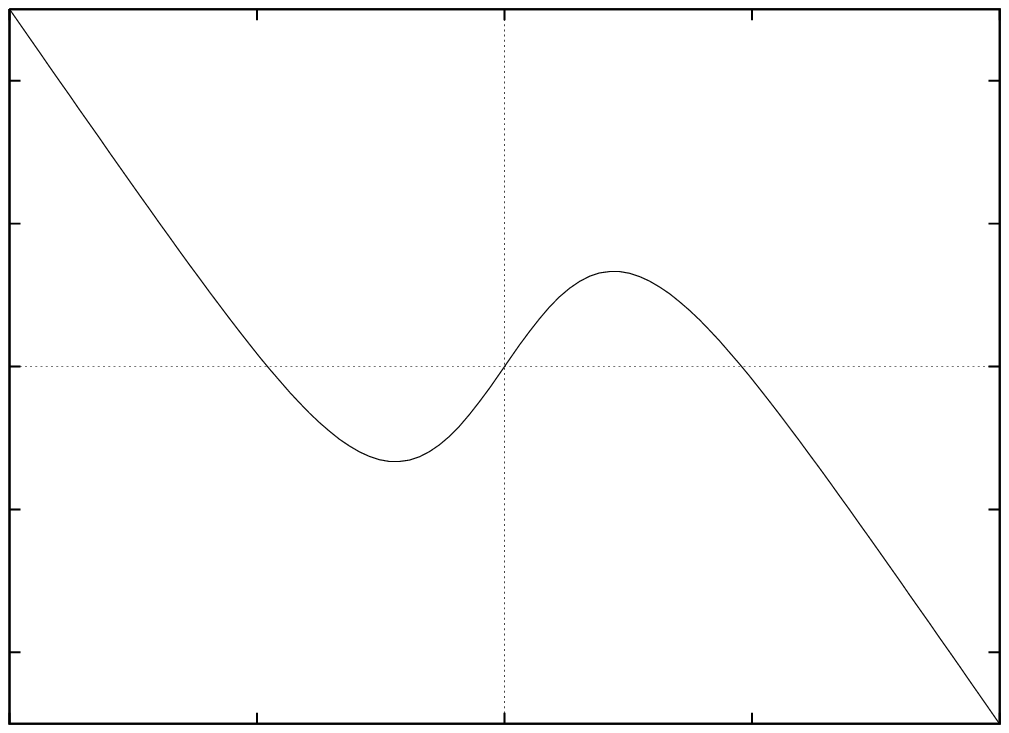}} \label{rino}%
    \gplfronttext
  \end{picture}%
\endgroup

\section{The variational problem (\ref{pranzo})}\label{var_problema}

In this section, we briefly analyze the variational problem
(\ref{pranzo}). Part of the  content of Lemma \ref{pierbau} below is
well known, nevertheless we recall its  derivation since we need
some additional developments in order to prove the LDP.

We call $\mathcal S$ the subset of $[0,\infty)^W$ given by the
elements $c\in [0, \infty)^W$ satisfying the following
irreducibility condition: given $\s \not = \s'$ in $\G$ there exists
a finite sequence $\s_1,\s_2, \dots, \s_n$ such that $\s_1=\s$,
$\s_n=\s'$ and $c(\s_i , \s_{i+1}) >0$ for all $i=1,\dots, n-1$. We
define $\mathcal{J}:[0,\infty)^W\rightarrow \bbR$ as
\begin{equation}\label{emma28} \mathcal{J} (c) = \sup _{z \in (0,\infty)^\G}
\widehat{\mathcal{J}}(c,z)\,, \qquad
\widehat{\mathcal{J}}(c,z):=\sum _{(\s,\s')\in W} c(\s,\s') \bigl( 1
- \frac{ z_{\s'}}{z_\s} \bigr)\,.
\end{equation}
\begin{Le}\label{pierbau}
The function $\mathcal{J}$  is convex and  continuous, and takes
values in $[0,\infty)$. Moreover, for each  $c\in \mathcal S$,
 the supremum  on $(0,\infty)^\G$
of the function $\widehat{\cJ}(c, \cdot)$ is a maximum and the set
of maximum points is given by  the ray $\{t\widetilde z\,:\, t
>0\}$, where $\widetilde z \in (0,\infty)^\G$ is the unique solution of the system
\begin{equation}\label{frignotta}
\sum_{\s'\in \G} c(\s,\s') \frac{z_{\s'}}{z_\s} = \sum _{\s'\in \G}
c(\s',\s) \frac{z_\s}{z_{\s'}}\,, \qquad \s \in \G\,,
\end{equation}
such that $\sum _{\s \in \G} \widetilde z_\s^2=1$.
\end{Le}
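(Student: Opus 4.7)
The proof splits into three parts: (i) convexity, nonnegativity, finiteness, and continuity of $\mathcal{J}$; (ii) existence of a maximizer of $\widehat{\mathcal{J}}(c,\cdot)$ for $c\in\mathcal{S}$; and (iii) uniqueness up to scaling together with identification of the maximizer via \eqref{frignotta}.

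For (i), note that for each fixed $z$ the map $c\mapsto\widehat{\mathcal{J}}(c,z)$ is affine, so $\mathcal{J}$ is a supremum of continuous affine functions of $c$, hence convex and lower semicontinuous on $[0,\infty)^W$. Choosing $z=(1,\dots,1)$ gives $\widehat{\mathcal{J}}(c,\mathbf{1})=0$, so $\mathcal{J}(c)\geq 0$, while $z_{\s'}/z_\s>0$ yields the uniform bound $\widehat{\mathcal{J}}(c,z)\leq\sum_{(\s,\s')\in W}c(\s,\s')$, so $\mathcal{J}$ is real-valued. Continuity on the polyhedral cone $[0,\infty)^W$ then follows from the standard fact that a real-valued convex function on a polyhedral convex set is continuous (Rockafellar, \emph{Convex Analysis}, Thm.~10.2).

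For (ii), fix $c\in\mathcal{S}$ and pass to logarithmic coordinates $w_\s:=\log z_\s$, setting
\begin{equation*}
\widehat{\mathcal{J}}^*(c,w):=\widehat{\mathcal{J}}(c,e^w)=\sum_{(\s,\s')\in W}c(\s,\s')\bigl(1-e^{w_{\s'}-w_\s}\bigr).
\end{equation*}
This is a concave function of $w\in\bbR^\G$ (the map $x\mapsto 1-e^x$ is concave, pre-composed with linear functionals and summed with nonnegative weights) and is invariant under $w\mapsto w+t\mathbf{1}$, so it descends to the quotient $\bbR^\G/\bbR\mathbf{1}$. The key coerciveness claim is: for every $v$ which is nonzero in the quotient, $\widehat{\mathcal{J}}^*(c,w+tv)\to-\infty$ as $t\to+\infty$. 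Choose $\s^-,\s^+$ attaining $\min v$ and $\max v$ (necessarily $v_{\s^+}>v_{\s^-}$); by the irreducibility of $c$ there is a finite chain $\s^-=\tau_0,\tau_1,\dots,\tau_n=\s^+$ with $c(\tau_{i-1},\tau_i)>0$, and since $v_{\tau_n}>v_{\tau_0}$ at least one index $i$ satisfies $v_{\tau_i}>v_{\tau_{i-1}}$; the corresponding term $-c(\tau_{i-1},\tau_i)e^{(w+tv)_{\tau_i}-(w+tv)_{\tau_{i-1}}}$ alone drives the sum to $-\infty$, while every remaining term $c(\s,\s')(1-e^{\cdot})$ is bounded above by $c(\s,\s')$. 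Coerciveness together with the continuity of the concave $\widehat{\mathcal{J}}^*$ in the quotient forces the supremum to be attained, yielding a ray $\{t\widetilde{z}\}_{t>0}$ of maximizers of $\widehat{\mathcal{J}}(c,\cdot)$.

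For (iii), uniqueness of the ray and the characterization via \eqref{frignotta} follow from a Hessian computation. A direct differentiation yields
\begin{equation*}
v^\top\nabla_w^2\widehat{\mathcal{J}}^*(c,w)\,v=-\sum_{(\s,\s')\in W}c(\s,\s')\,e^{w_{\s'}-w_\s}\,(v_\s-v_{\s'})^2\leq 0,
\end{equation*}
with equality only when $v_\s=v_{\s'}$ on every edge with $c(\s,\s')>0$; by the irreducibility of $c\in\mathcal{S}$ this forces $v$ to be constant, so $\widehat{\mathcal{J}}^*$ is strictly concave on the quotient and the maximizer there is unique. Setting $\partial_{w_\s}\widehat{\mathcal{J}}^*=0$ gives $\sum_{\s'}c(\s,\s')\,z_{\s'}/z_\s=\sum_{\s'}c(\s',\s)\,z_\s/z_{\s'}$, which is \eqref{frignotta}, and the normalization $\sum_\s\widetilde z_\s^2=1$ selects a unique representative on the ray. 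The main technical point in the argument is the coerciveness in the quotient: it requires one to invoke irreducibility in the exact form of an increasing edge along a minimum-to-maximum chain, without which $\widehat{\mathcal{J}}^*$ could be bounded but fail to attain its supremum.
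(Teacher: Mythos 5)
Your proof is correct, and part (i) follows the paper almost verbatim (supremum of affine functions gives convexity and lower semicontinuity, the bound $0=\widehat{\mathcal{J}}(c,\mathbf 1)\leq\mathcal{J}(c)\leq\sum c(\s,\s')$, and Rockafellar Thm.~10.2 for continuity). For (ii)--(iii) you take a genuinely different route. The paper works directly with the variable $z$: it writes $\mathcal{J}(c)=\sum c(\s,\s')-\inf\Phi$, restricts $\Phi(z)=\sum c(\s,\s')z_{\s'}/z_\s$ to the sphere slice $\cA=\{\sum z_\s^2=1\}$, shows via a minimizing-sequence argument and irreducibility that no coordinate can vanish in the limit (so the infimum is attained inside $\cA$), and then proves uniqueness by computing the Hessian of $\Phi_{|\cA}$ \emph{only at critical points} and showing it is strictly positive there, so all extremal points are strict local minima. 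You instead pass to logarithmic coordinates $w=\log z$, observe that $\widehat{\mathcal{J}}^*(c,w)=\sum c(\s,\s')(1-e^{w_{\s'}-w_\s})$ is concave with Hessian $-\sum c(\s,\s')e^{w_{\s'}-w_\s}(v_{\s'}-v_\s)^2$ that is negative semidefinite \emph{everywhere}, with kernel exactly the constants by irreducibility; this gives global strict concavity on the quotient $\bbR^\G/\bbR\mathbf 1$, so uniqueness is immediate rather than via the paper's ``every critical point is a strict local minimum, hence at most one'' step, and existence follows from coerciveness along rays (an increasing edge on an irreducible chain from $\arg\min v$ to $\arg\max v$ drives $\widehat{\mathcal{J}}^*$ to $-\infty$). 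The logarithmic substitution buys you a cleaner structure: a vector-space quotient in place of the sphere, a Hessian formula valid everywhere instead of only at critical points (the paper's analogue of your Hessian identity is \eqref{treviso}, which is derived from the stationarity condition and hence holds only at critical points), and uniqueness as a direct consequence of strict concavity. The paper's version avoids the change of variables and stays elementary, at the cost of the slightly more delicate mountain-pass-type uniqueness argument.
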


\begin{proof}
Since  $\cJ $  is the supremum of a family of linear functions in
$c\in [0,\infty)^W$ parametrized by $z$, $\mathcal{J}$ is convex and
lower semicontinuous  on  the set $[0,\infty)^W$, which is    a
locally simplicial set (see \cite{R}[Chapter 10]). Hence we can apply Theorem 10.2 in \cite{R}
implying that $\mathcal{J}$ is upper semicontinuous. This concludes
the proof of the continuity of $\mathcal{J}$. Since $ 0=  \widehat \cJ (c,
\underline{1} )\leq \cJ (c) \leq \sum_{(\s,\s')\in W} c(\s,\s')
<\infty,$ we get that $\cJ(c)\in [0,\infty)$.

\smallskip

Let us now assume that $c\in \mathcal S$ and prove the conclusion of the Lemma.
We can write $\cJ(c)= \bigl[ \sum_{(\s,\s')\in W } c(\s,\s') \bigr]- \inf_{z\in
(0, \infty)^\G} \Phi (z) $, where
\begin{equation}\label{lorella08}
\Phi(z):= \sum _{(\s,\s')\in W} c(\s,\s') \frac{z_{\s'}}{z_{\s}}\,,
\qquad z\in (0,\infty)^\G\,.
\end{equation}
Hence it is enough to prove the analogous statements for the infimum
of $\Phi$.  Since $\Phi(z)= \Phi (\g z)$, for all $\g>0$, in order
to study its infimum we can restrict $\Phi$ to the set $\cA:=
\{ z \in (0,\infty)^\G\,:\, \sum _{\s \in \G} z_\s ^2 = 1\}$.
% that is open and relative compact.
 Trivially $\Phi$ is a positive function.
Let $z^{(n)}\in \mathcal A$ be a minimizing sequence for $\Phi$, i.e. a
sequence such that
$$
\lim_{n\to \infty}\Phi(z^{(n)})=\inf_{z\in
(0, \infty)^\G} \Phi (z)\,.
$$
Due to the fact that $\mathcal A$ is relatively compact in $\bbR^\G$, at cost to take a  subsequence, we
can assume that $z^{(n)}$ is convergent. Let us first  suppose that
$\widetilde{z}=\lim_{n\to \infty}z^{(n)}$ is such that there exists some $\sigma\in \G$
with $\widetilde{z}_\s=0$. Then necessarily also $\widetilde{z}_{\s'}=0$ for any $\s'$
such that $c(\s,\s')>0$. In fact, otherwise, we would have:
$$
\Phi(z^{(n)})\geq c(\s,\s')\frac{z^{(n)}_{\s'}}{z^{(n)}_{\s}}\stackrel{n\to \infty}{\longrightarrow}+\infty\, .
$$
Iterating this argument and  using the fact that $c\in \mathcal S$,
we deduce that necessarily $\widetilde{z}=0$ but this is
incompatible with  the fact that $\widetilde{z}$ belongs to the
closure of $\mathcal A$. Therefore, it must be $\widetilde{z}\in
\mathcal A$. Since the function  $\Phi$ is continuous on $\mathcal
A$, we conclude that
$$
\inf_{z\in
(0, \infty)^\G} \Phi (z) =\lim_{n\to \infty}\Phi(z^{(n)})=\Phi(\widetilde{z})\, ,
$$
and the infimum is a minimum.
Since
$\widetilde z$ is a minimum point for the $C^\infty$ function $\Phi$ on
$(0,\infty)^\G$, it must be $\nabla \Phi (\widetilde z)=0$ . As follows from the  computations below, this
identity coincides with the system of equations (\ref{frignotta}).
%Finally, we prove that if $z^{(1)} ,z^{(2)} \in (0,\infty)^\G$
%satisfy $\nabla \Phi \bigl (z^{(1)}\bigr) = \nabla \Phi \bigl(
%z^{(2)}\bigr)=0$ then $z^{(1)}= t z^{(2)}$ for some $t>0$. Indeed,
%
Finally, we prove that such a system has a unique solution on $\cA$.
%Indeed, it is enough to prove that each extremal point of
%$\Phi_{|\cA}$ (namely, the function $\Phi $ restricted to $\cA$) is
%a strict local minimum of $\Phi _|A$.  Hence, $\Phi_{|\cA}$ can have
%at most one extremal point (if there would be two local minimum,
%thus implying that $\nabla \Phi =0$ has at most one zero on $\cA$
%if two points $z^{(1)}$ and $z^{(2)}$ in $\cA$ satisfy $\nabla \Phi
%\bigl (z^{(1)}\bigr) = \nabla \Phi \bigl( z^{(2)}\bigr)=0$  then
%they must be st
% To this aim, it is enough to prove
%that each point $z$ s.t.
%$\nabla \Phi (z)=0 $    of $\Phi$ is a strict local minimum.
Indeed, straightforward computations give that
\begin{align*}
&  \frac{
\partial \Phi }{
\partial z_{\widetilde \s}}(z) =
\sum_{\s:\s\not =\widetilde \s} \left(c(\s,\widetilde \s) \frac{1}{z_\s}-
c(\widetilde \s, \s)
\frac{z_\s}{z_{\widetilde \s}^2}\right)\,,\\
& \frac{ \partial ^2 \Phi}{\partial z_{\widetilde \s} ^2}(z) = \sum
_{\s:\s\not = \widetilde \s} 2 c( \widetilde \s, \s)
\frac{ z_\s }{ z_{\widetilde \s} ^3}\,,\\
& \frac{ \partial ^2 \Phi}{ \partial z _{\widehat \s} \partial z _{\widetilde
\s}} (z)= - c(\widehat \s, \widetilde \s) \frac{ 1}{z_{\widehat \s}^2}- c(\widetilde \s, \widehat \s)
\frac{1}{z_{\widetilde \s} ^2} \,, \qquad \widetilde \s \not =\widehat \s\,.
\end{align*}
Setting
$$ c (\s,  \widetilde \s|z)=
\begin{cases}
c(\s, \widetilde  \s) \frac{ z_{ \widetilde \s } }{ z_\s } \,, &
\text{ if }  \s \not =  \widetilde \s\,,\\
- \sum _{ \s'\,:\,\s'\not = \s } c(\s,  \s') \frac{ z_{\s'}}{z_\s}
\,,  & \text{ if } \s= \widetilde \s\,,
\end{cases}
$$
the above computations imply for each $z \in (0,\infty)^\G$ that
\begin{equation}\label{padova}
\sum _{\s\in \G }\sum_{\widetilde  \s \in \G} a_ \s a _{\widetilde \s
}\frac{
\partial ^2 \Phi}{\partial z_{ \s} \partial z_{ \widetilde \s} }(z) =-2
\sum _{\s\in \G } \sum_{\widetilde \s \in \G}  \frac{ a_ \s}{z_{\s} } c(
\s,\widetilde   \s|z)\frac{ a_{\widetilde\s}}{z_{\widetilde \s}}\,,\qquad a
\in\bbR^\G\,.
\end{equation}
%We only need to prove that if $\nabla \Phi (z)=0$ then the r.h.s. of
%(\ref{padova}) is positive, i.e. the Hessian of $\Phi$ in $z$ is
%positive defined. Indeed, i
On the other hand, from the system of identities (\ref{frignotta}), it is
 simple to derive that, whenever $\nabla \Phi (z)=0$, it holds
\begin{equation}\label{treviso}
\sum _{\s}\sum_{\widetilde \s} c(\s,\widetilde \s|z) \bigl(b_{\widetilde
\s}-b_{\s} \bigr) ^2 =-2 \sum_{\s}\sum_{\widetilde\s } b_{\s} c(\s,
\widetilde \s|z) b_{\widetilde \s}\,, \qquad b \in \bbR^\G \,.
\end{equation}
Setting $b_\s=a_\s/z_\s$ in \eqref{treviso} and comparing the resulting identity with \eqref{padova}, we obtain that
\begin{equation}\label{venezia}
\sum _{\s }\sum_{\widetilde  \s } a_ \s a _{\widetilde \s
}\frac{
\partial ^2 \Phi}{\partial z_{ \s} \partial z_{ \widetilde \s} }(z) =
\sum _{\s}\sum_{\widetilde \s} c(\s,\widetilde \s|z) \bigl(
\frac{a_{\widetilde \s}}{z_{\widetilde \s} } -\frac{a_\s}{z_\s} \bigr) ^2
\end{equation}
if $\nabla \Phi (z)=0$.
 Hence, this last condition implies that  the r.h.s. of (\ref{venezia}) is
zero
% $\sum _{\s\in \G}\sum_{\widetilde \s \in \G} a_ \s a _{\widetilde \s }\frac{
%\partial ^2 \Phi}{\partial z_{ \s} \partial z_{ \widetilde \s} }(z)=0$
if and only is $a_\s /z_\s= a_{\widetilde \s}/z_{\widetilde \s} $
whenever $c(\s,\widetilde \s)>0$. Due to the fact that $c\in
\mathcal S$, this implies that  the vector $a$ is proportional to
$z$. We observe that the tangent space in $z$ to $\cA$, i.e. $T_z
\cA$, is given by the $\bbR^\G$--vectors orthogonal to $z$.  Hence,
if $z\in \cA$ satisfies $\nabla \Phi (z)=0$ then the map $\Phi_{|A}$
($\Phi$ restricted to $\cA$) has strictly positive defined Hessian
in $z$, thus implying that $z$ is a strict local minimum. On the
other side, if $z \in \cA$ is an extremal point of $\Phi_{|\cA}$
then $\nabla \Phi (z)$ is orthogonal to the tangent space $T_z \cA$,
which is given by all vectors orthogonal to $z$. But, since $\Phi
(tz )=\Phi(z)$ for all $t>0$, by differentiating this equality in
$t=1$ we obtain that $\nabla \Phi (z)\cdot z =0$. In conclusion: the
set of extremal points of $\Phi _{|A}$ coincides with the set  $\{z
\in \cA:\nabla \Phi (z)=0\}$ and we know that all these points are
strict local minima of $\Phi_{|A}$. Hence, there can be at most one
local minimum point.
% Since all addenda in the definition of $\phi$ are non negative,
%for each $z\in (0,\infty)^\G$ it holds
%$$ \Phi (z) \geq \sum _{\cC_i:|\cC_i|\geq 2} \sum _{\s,\s'\in \cC_i} \p
%(\s) r(\s,\s') \frac{ z_{\s'} }{z_\s} \,. $$
\end{proof}

%We point out that $\mathcal S$ is an open subset of
%$[0,\infty)^W$, but it is  not open in $\bbR^W$.
%it has to be though
%of as a differential manifold with boundary.
 We note that, given $c \in \cS$, there exists a
small ball $B $ in $\bbR ^W$ centered in $c$ such that $B\cap
[0,\infty)^W\subset \cS$. Hence,
  we think of $\cS$ as a manifold of dimension $|W|$ with boundary,
embedded in $\bbR^W$. Given $c \in \cS$, we write $\widetilde z (c)$
for the unique point of maximum of $\widehat{\mathcal J} (c, \cdot)$
in $\cA$ described in the above lemma. Then, the map $\widetilde z
(c)$ is regular:
\begin{Le}\label{pierbaubau}
The function $\widetilde z: \cS \rightarrow \cA$  is $C^1$, i.e.
there exists a $C^1$ function $g:\cU\rightarrow \cA$ from an open
subset $\cU\subset \bbR^W$ containing $\cS$, whose restriction to
$\cS$ coincides with $\widetilde z$.
\end{Le}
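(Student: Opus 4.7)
The plan is to apply the implicit function theorem, using the non-degeneracy of the Hessian at the minimum point established in the proof of Lemma \ref{pierbau}. Fix $c_0 \in \cS$ and set $z_0 := \widetilde z(c_0)$. Recall from \eqref{lorella08} that $\Phi_c(z) = \sum _{(\s,\s')\in W} c(\s,\s') z_{\s'}/z_\s$ is $C^\infty$ on $(0,\infty)^\G$ and linear in $c \in \bbR^W$, so $(c,z) \mapsto \Phi_c(z)$ is jointly smooth on $\bbR^W \times (0,\infty)^\G$. By Lemma \ref{pierbau}, $z_0$ is a strict local minimum of $\Phi_{c_0}$ restricted to $\cA$, and the proof there (see \eqref{venezia}) shows that the Hessian of $\Phi_{c_0 |\cA}$ at $z_0$ is strictly positive definite on the tangent space $T_{z_0}\cA$.

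Now work in local coordinates on $\cA$. Since $\cA = \{z \in (0,\infty)^\G : \sum_\s z_\s^2 = 1\}$ is a smooth $(|\G|-1)$-dimensional submanifold of $(0,\infty)^\G$, pick a smooth chart $\psi : U \to V$ with $U \subset \bbR^{|\G|-1}$ open, $V \subset \cA$ open, and $\psi(u_0) = z_0$. Define
\[
H(c,u) := \nabla_u\bigl(\Phi_c \circ \psi\bigr)(u), \qquad (c,u) \in \bbR^W \times U.
\]
Then $H$ is $C^\infty$, $H(c_0,u_0)=0$ (since $z_0$ is critical for $\Phi_{c_0|\cA}$), and $\partial_u H(c_0,u_0)$ is the Hessian of $\Phi_{c_0|\cA}$ at $z_0$ read in the chart $\psi$, which is positive definite and thus invertible. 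The implicit function theorem therefore yields an open neighborhood $V_{c_0} \subset \bbR^W$ of $c_0$, an open neighborhood $U_{c_0} \subset U$ of $u_0$, and a unique $C^1$ map $h_{c_0} : V_{c_0} \to U_{c_0}$ with $h_{c_0}(c_0)=u_0$ and $H(c,h_{c_0}(c))=0$ for all $c \in V_{c_0}$. Set $g_{c_0} := \psi \circ h_{c_0} : V_{c_0} \to \cA$; this is a $C^1$ map with $g_{c_0}(c_0) = \widetilde z(c_0)$.

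Next, for $c \in V_{c_0} \cap \cS$ we claim $g_{c_0}(c) = \widetilde z(c)$. Indeed, shrinking $V_{c_0}$ if necessary, the Hessian of $\Phi_{c |\cA}$ at $g_{c_0}(c)$ remains positive definite by continuity, so $g_{c_0}(c)$ is a strict local minimum of $\Phi_{c|\cA}$; on the other hand $\widetilde z(c)$ is the unique global minimum by Lemma \ref{pierbau}, and continuity of the implicit solution forces $g_{c_0}(c)$ to lie in a small neighborhood of $z_0$ where no other critical point of $\Phi_{c|\cA}$ exists (by IFT uniqueness). Hence the two points coincide. Finally, set $\cU := \bigcup_{c_0 \in \cS} V_{c_0}$, an open set in $\bbR^W$ containing $\cS$. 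On an overlap $V_{c_0} \cap V_{c_1}$, after further shrinking so that both $g_{c_0}$ and $g_{c_1}$ take values in a common chart neighborhood where the IFT solution is unique, we have $g_{c_0} = g_{c_1}$. The locally defined maps therefore patch into a single $C^1$ map $g : \cU \to \cA$ extending $\widetilde z$.

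The main obstacle is the coherent gluing of the local IFT extensions into a single globally defined $C^1$ map. The delicate point is that IFT only guarantees uniqueness of the implicit solution within a small $(c,z)$-box, and a priori two such boxes coming from different base points $c_0, c_1 \in \cS$ might produce distinct local solutions over $V_{c_0}\cap V_{c_1}$. This is handled by exploiting the global uniqueness of $\widetilde z$ on $\cS$ (Lemma \ref{pierbau}) together with continuity of the local extensions, ensuring that on overlaps the extensions agree in a common chart and hence coincide as $\cA$-valued maps.
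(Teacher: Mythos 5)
Your proposal and the paper's proof are the same in essence: both invoke the Implicit Function Theorem at a critical point of $\Phi_{c|\cA}$, using the strict positivity of the Hessian established in the proof of Lemma~\ref{pierbau}. The only cosmetic difference is that you work in a local chart $\psi$ of $\cA$ and differentiate $\Phi_c\circ\psi$, whereas the paper works with the map $G(c,z)=\nabla_z\Phi$ directly on $\cA$ and reduces dimension by composing with a well-chosen coordinate projection $\pi_{\widetilde\s}$. These are interchangeable ways of applying IFT on a submanifold.

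Two points deserve a sharper treatment. First, your argument that $g_{c_0}(c)=\widetilde z(c)$ for $c\in V_{c_0}\cap\cS$ is muddled: being a strict local minimum does not by itself identify $g_{c_0}(c)$ with the global minimum $\widetilde z(c)$, and ``a small neighborhood of $z_0$ where no other critical point exists'' is not given by IFT (IFT excludes other solutions \emph{in a $(c,z)$-box}, not other critical points of a fixed $\Phi_{c|\cA}$). The clean route, which is what the paper implicitly uses, is the last assertion of Lemma~\ref{pierbau}'s proof: for $c\in\cS$ the critical points of $\Phi_{c|\cA}$ are all strict local minima, so there is at most one, and $\widetilde z(c)$ is one; hence any solution of $\nabla(\Phi_{c}|_{\cA})=0$ equals $\widetilde z(c)$. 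Once $V_{c_0}$ is shrunk so that $g_{c_0}(c)$ is genuinely a critical point on $\cA$ (not merely of a chart representative on a subdomain), this identification follows immediately.

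Second, the gluing step you add is more than the paper does — the paper stops after constructing a local $C^1$ extension around each $c\in\cS$ — but your patching argument on overlaps $V_{c_0}\cap V_{c_1}$ is not airtight: a component of such an overlap need not meet $\cS$, and for $c\notin\cS$ the uniqueness-of-critical-points argument of Lemma~\ref{pierbau} is unavailable, so two branches of IFT solutions from different base points could in principle diverge off $\cS$. The statement of the lemma does not require the extension to be canonical off $\cS$, so the cleanest fix is simply to glue the local extensions $g_{c_0}$ with a smooth partition of unity subordinate to $\{V_{c_0}\}$ (and, if one wants the range in $\cA$, to follow with the nearest-point projection onto $\cA$ after shrinking the cover). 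This produces a $C^1$ map on an open $\cU\supset\cS$ agreeing with $\widetilde z$ on $\cS$, which is all that is claimed.
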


\begin{proof}
%It is enough to apply the Implicit Function Theorem to the function
%We use the same notation of the proof of Lemma \ref{pierbau}.
Let us first consider the smooth function $ G: \bbR^W  \times \cA
\rightarrow \bbR^\G$ defined as
$$ G( c, z)_\s =
%\sum_{\s'\in \G} c(\s,\s') \frac{z_{\s'}}{z_\s}- \sum _{\s'\in \G}
%c(\s',\s) \frac{z_\s}{z_{\s'}}\,,
\sum _{\s'\not = \s} \bigl( c(\s',\s)/z_{\s'}-c(\s,\s')
z_{\s'}/z_\s^2\bigr)\,.
$$
Let us fix $c \in \cS$ and write $\widetilde z $ for $\widetilde z
(c)$. Due to the computations in the proof of Lemma \ref{pierbau},
$G(c,z)= \nabla _z \Phi (z)$ for each $z \in (0,\infty)^\G$,  the
function $\Phi(z)$ being defined in (\ref{lorella08}). In
particular, $G(c, \widetilde z)=0$. Moreover, we know that the
tangent map
%Due to the
%computations in the proof of Lemma \ref{pierbau}, we know that the
%function $\Phi_{|\cA}  $ has strictly positive--defined Hessian on
%extremal points. This means that, if $z \in \cA$ and $G(c,z)=0$,
%then the differental map
$T_z G(c, \cdot)$ from $T_z\cA$ to $\bbR^\G$  is a linear
monomorphism for $z=\widetilde z$, since the Hessian of
$\Phi_{|\cA}$ in $\widetilde z$ is strictly positive. We call
$V_{\widetilde z}$ the image of $T_{\widetilde z } \cA$ by the
tangent map $T_{\widetilde z } G(c,\cdot)$. Then $V_{\widetilde z}$
has dimension $\k-1=|\G|-1$ as  $T_{\widetilde z}\cA$. In
particular,
 there exists $ \widetilde \s \in \G$ such that $\p_{\widetilde \s}( W_{\widetilde z})$ has
 dimension $\k-1$, where $\p_{\widetilde \s}$ is the canonical orthogonal projection
$ \p_{\widetilde \s} : \bbR ^\G \rightarrow \bbR^{\G\setminus
\{\widetilde \s\}}$.
 This implies that the smooth composed map $H: \bbR^W \times
\cA \rightarrow \bbR^{\G\setminus \{ \widetilde \s\} }$ defined as
$H= \p_{\widetilde \s} \circ G$ has isomorphic tangent map
$T_{\widetilde z} H( c,\cdot) $ from $T_{\widetilde z} \cA$ to
$\bbR^{\G\setminus \{\widetilde \s\}}$.  Hence we can apply  the
Implicit Function Theorem for differential manifolds and conclude
the following: there exist a neighborhood $U$ of $c$ in $\bbR ^W$, a
neighborhood $V$ of $ \widetilde z$ in $\cA$ and a $C^1$ map
$f:U\rightarrow V$ such that i) $ H(c', f(c'))=0$ for all $c' \in U$
and ii) if $H(c',z)=0$ for some $c'\in U$ and $z \in V$ then
$z=f(c')$. At this point, we only need to  prove that
$G(c',f(c'))=0$ for all $c'\in U$ in order to conclude that
$\widetilde z (c')= f(c')$. If $H(c',z)=0$ then $G(c',z)_\s=0$ for
all $\s \in \G \setminus \{\widetilde \s\} $. On the other hand,
for all $(c', z) \in \bbR^W\times (0,\infty)^\G$ it holds $ \sum
_{\s } z_\s G(c', z)_\s =0$. Hence, if  $H(c',z)=0$, it must be
$$ G(c',z)_{\widetilde \s}= z_{\widetilde \s}^{-1} \sum _{\s:\s\not =\widetilde \s}
z_\s G(c',z)_\s=0\,,$$ thus concluding the proof.
\end{proof}

We give now another technical result which will be useful for the
proof in Appendix \ref{molofacciosec}:

\begin{Le} \label{vivairem}
Fix $c\in [0,\infty)^W$. Then,  $\cJ (c)=0$ if and only if
% We observe that when $c\in \mathcal S$ is such that
\begin{equation}
\sum_{\s'\in \G}c(\s,\s')=\sum_{\s'\in \G}c(\s',\s)\,.
\label{accolone}
\end{equation}
\end{Le}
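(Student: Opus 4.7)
The plan is to handle the two implications separately; both are short because $\widehat{\mathcal{J}}(c,\underline{1})=0$ always, so whenever $\mathcal{J}(c)=0$ the all-ones vector $\underline{1}$ automatically realizes the supremum in \eqref{emma28}.

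For the implication \eqref{accolone}$\Rightarrow \mathcal{J}(c)=0$, I would start from the elementary inequality $1-y\leq -\log y$ valid for $y>0$, applied with $y=z_{\s'}/z_\s$. This gives, for every $z\in(0,\infty)^\G$,
\[
\widehat{\mathcal{J}}(c,z)\leq \sum_{(\s,\s')\in W} c(\s,\s')\bigl(\log z_\s -\log z_{\s'}\bigr)=\sum_{\s\in\G}(\log z_\s)\Bigl[\sum_{\s'\neq\s}c(\s,\s')-\sum_{\s'\neq\s}c(\s',\s)\Bigr].
\]
Under \eqref{accolone} each bracket vanishes, so $\widehat{\mathcal{J}}(c,z)\leq 0$. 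Combined with the identity $\widehat{\mathcal{J}}(c,\underline{1})=0$ this yields $\mathcal{J}(c)=0$.

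For the converse, assume $\mathcal{J}(c)=0$. Since $\widehat{\mathcal{J}}(c,\underline{1})=0$ equals this supremum and $z\mapsto\widehat{\mathcal{J}}(c,z)$ is $C^\infty$ on the open set $(0,\infty)^\G$, the point $z=\underline{1}$ is an interior maximizer, so the gradient there vanishes. I would then compute directly
\[
\frac{\partial\widehat{\mathcal{J}}(c,z)}{\partial z_{\widetilde\s}}=\sum_{\s'\neq\widetilde\s}c(\widetilde\s,\s')\frac{z_{\s'}}{z_{\widetilde\s}^2}-\sum_{\s\neq\widetilde\s}c(\s,\widetilde\s)\frac{1}{z_\s},
\]
and evaluation at $z=\underline{1}$ recovers \eqref{accolone} at each $\widetilde\s\in\G$.

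I do not anticipate any serious obstacle: the only things to check are the one-line derivative computation and the validity of $1-y\leq -\log y$ for $y>0$. In particular the argument appeals neither to the existence/uniqueness analysis of Lemma \ref{pierbau} nor to the irreducibility condition $c\in\mathcal S$, which is consistent with the statement only requiring $c\in[0,\infty)^W$.
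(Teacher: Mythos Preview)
Your proof is correct and takes a genuinely different, more elementary route than the paper's.

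The paper first treats the irreducible case $c\in\mathcal S$ by invoking Lemma \ref{pierbau}: if \eqref{accolone} holds, the constant vector $z_\s\equiv 1/\sqrt{|\G|}$ solves the system \eqref{frignotta} and therefore realizes $\cJ(c)=\widehat\cJ(c,z)=0$; if \eqref{accolone} fails, this $z$ is not the (unique) maximizer, so $\cJ(c)>\widehat\cJ(c,z)=0$. The extension to arbitrary $c\in[0,\infty)^W$ is then obtained by a perturbation argument: one interpolates with an irreducible $c_*\in\cS$ satisfying \eqref{accolone} and uses the continuity and convexity of $\cJ$ established in Lemma \ref{pierbau}.

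Your argument sidesteps Lemma \ref{pierbau} entirely. The inequality $1-y\leq -\log y$ gives $\widehat\cJ(c,z)\leq 0$ under \eqref{accolone} in one line, and the first-order condition at the interior point $z=\underline{1}$ yields the converse directly. This works uniformly for all $c\in[0,\infty)^W$, with no irreducibility assumption and no limiting procedure. The paper's route is natural in context because the structural analysis of Lemma \ref{pierbau} is needed elsewhere in the LDP proof, but for this lemma alone your approach is both shorter and more self-contained.
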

\begin{proof}
Let us first prove the claim for $c \in \cS$. If \eqref{accolone} is
verified, then it is trivial to check that $z\in (0,\infty)^\G$ such
that $z_\s\equiv 1/\sqrt{|\G|}$ satisfies   \eqref{frignotta}, and
therefore it is its unique normalized solution. Then, by direct
computation, $\cJ(c)=\widehat \cJ( c,z)=0$.  If \eqref{accolone} is
not verified, then  the above $z$
 is not a solution of \eqref{frignotta}. This implies that
$\cJ(c)>\widehat \cJ(c,z)$. But trivially $\widehat \cJ(c,z)=0$,
thus implying that $\cJ(c)>0$.

We now  extend the result to general $c \in [0,\infty)^W$. To this
aim define $c_*\in [0,\infty)^W$ setting $c_*(\s,\s')\equiv 1$.
Trivially, $c_*$ belongs  to $\cS$ and satisfies \eqref{accolone}.
Suppose first that $c$ satisfies \eqref{accolone}. Then each vector
$\l c +(1-\l) c_*$ with $\l \in [0,1)$ satisfies \eqref{accolone},
and moreover belongs to $\cS$ having only positive entries. By the
first part, we conclude that $\cJ( \l c+ (1-\l) c_*)=0$ for all $\l
\in [0,1)$. Due to the continuity of $\cJ$ (see Lemma \ref{pierbau})
we conclude that $\cJ(c)=0$. Let us now suppose that $c$ does not
fulfill  \eqref{accolone} and prove that $\cJ(c)>0$.
 By convexity of $\cJ$ (see Lemma
\ref{pierbau}), for each $\l \in [0,1]$ it holds
\begin{equation}\label{saldi}
\cJ( \l c + (1-\l) c_* ) \leq \l \cJ(c) + (1-\l) \cJ(c_*)= \l \cJ
(c) \,. \end{equation} On the other hand, all vectors $\l c +(1-\l)
c_*$ with $\l \in [0,1)$ belong to $\cS$ and do not satisfy
\eqref{accolone}. Due to the first part, taking $\l=1/2$ we conclude
that $\cJ(1/2(c+c^*))>0$. This together with \eqref{saldi} implies
that $\cJ(c)>0$.
\end{proof}

\bigskip

Let us now come back to the variational problem (\ref{pranzo}) and derive some results about it from the
previous observations.

\smallskip

 Given a nonnegative measure $\p$ on $\G$ and nonnegative numbers  $ r(\s,\s')$, $(\s,\s')\in
 W$,
  we define  $c[\p,r] \in [0,\infty)^W$ as
 \begin{equation}\label{laviniagiulia}
  c[\p, r ] (\s,\s'):= \p(\s) r (\s,\s')\,, \qquad \forall
 (\s,\s')\in W\,.
 \end{equation}
Then (recall (\ref{pranzo}) and \eqref{emma28})
$$ j(\p, r)=\cJ\bigl(c[\p,r]\bigr)
$$
and the previous results on the variational problem associated to
$\cJ$ give information on the variational problem associated to $j$.
In particular, we stress that due to (\ref{laviniagiulia}) the
system of identities (\ref{frignotta}) coincides with the
stationarity of the measure $\p$ w.r.t. the  Markov generator
$R(\cdot , \cdot | z)$ on $\G$  defined as
$$ R (\s,  \s'|z)=
\begin{cases}
r(\s, \s') \frac{ z_{ \s' } }{ z_\s } \,, &
\text{ if }  \s \not =  \s'\,,\\
- \sum _{ \widetilde \s\,:\,\widetilde \s\not = \s } r(\s,\widetilde
\s) \frac{ z_{\widetilde \s}}{z_\s} \,,  & \text{ if } \s=  \s'\,.
\end{cases}
$$
 Recall the definition of the function $\widetilde z: \cS
\rightarrow \cA$. Given a nonnegative  measure $\p$  on $\G$ and $(x,s) \in
\bbR^d\times [0,T]$, we define
\begin{equation}\label{radiodue}
\widetilde z (\p, x,s) := \widetilde z \bigl( c\bigl[\p,r(\cdot,\cdot|x,s)
\bigr] \bigr)
\end{equation}
if $c\bigl[\p,r(\cdot,\cdot|x,s) \bigr] $ belongs to $\cS$.

\smallskip

Let us now take a strictly positive measure  $\p$ on $\G$, i.e.
$\p(\s)>0$ for all $\s \in \G$. Then, by Assumption (A2)
$c\bigl[\p,r(\cdot,\cdot|x,s) \bigr] $ belongs to $\cS$. If the
quasistationary measure $\mu(\cdot|x,s)$ is reversible for the
chemical generator $L_c(x,s)$ with jump rates $r(\s,\s'|x,s)$, then
the solution $\widetilde z\in\cA$ of the system (\ref{frignotta})
with $c= c\bigl[\p,r(\cdot,\cdot|x,s) \bigr] $ can be computed
explicitly. Indeed, setting
\begin{equation}
\widetilde z _\s =Z \sqrt{  \frac{\p(\s)}{\mu(\s|x,s)} } \,,\qquad
\s \in \G
\end{equation}
(where $Z$ is the normalizing constant assuring that $\widetilde z \in
\cA$), one has the detailed balance equation
\begin{equation}\label{signorina}
\p(\s) r (\s,\s'|x,s) \frac{\widetilde z_{\s'}}{\widetilde z_\s }=\p(\s')
r(\s',\s|x,s) \frac{\widetilde z_\s}{\widetilde z_{\s'}}\,,
\end{equation}
and consequently the validity of \eqref{frignotta}. Due to Lemma
\ref{pierbau} we conclude that
\begin{multline}\label{cammina}
j(\p , r(\cdot,\cdot|x,s)) = \widehat\cJ
\bigl(c\bigl[\p,r(\cdot,\cdot|x,s)
\bigr], \widetilde z \bigr)=\\
% In particular, if $\mu(\cdot|x,s)$
%is reversible for $L_c(x,s)$ for all $(x,s)\in \O\times [0,T]$, then
%\begin{equation}\label{baubau}
\sum _{(\s,\s')\in W}   \p (\s) r(\s,\s'|x,s) \left(1- \sqrt{ \
\p(\s') \mu(\s|x,s) \over \p(\s) \mu(\s'|x,s) }\right)\,.
\end{multline}
If $\p$ is not strictly positive, we can take a sequence  $\p_n$ of
strictly positive measure on $\G$ such that $\p_n(\s) \rightarrow
\p(\s)$ for each $\s \in \G$. This implies that
$$ c\bigl[ \p_n , r(\cdot,\cdot|x,s) \bigr] \rightarrow c\bigl[ \p, r (\cdot,\cdot
|x,s)\bigr] \,.$$ Since, as proved in Lemma \ref{pierbau}, $\cJ$ is
continuous on $[0,\infty)^W$, we obtain that
$$
j(\p_n , r(\cdot,\cdot|x,s)) = \cJ
\Bigl(c\bigl[\p_n,r(\cdot,\cdot|x,s) \bigr] \Bigr)\rightarrow   \cJ
\Bigl(c\bigl[\p ,r(\cdot,\cdot|x,s) \bigr] \Bigr)=j(\p ,
r(\cdot,\cdot|x,s))\,.$$ The above limit allows to extend
(\ref{cammina}) also to the case of general $\p\in [0,\infty)^\G$
with the convention to set $\p (\s) r(\s,\s'|x,s) \Bigl(1- \sqrt{ \
\p(\s') \mu(\s|x,s) \over \p(\s) \mu(\s'|x,s) }\Bigr)$ equal to zero
if $\p(\s)=0$.

This facts imply at once (\ref{variazione2}) assuming
\eqref{variazione1}

\section{Proof of Theorem \ref{LLN} }\label{dim_LLN}

Our proof of the law of large numbers in the time interval $[0,T]$
is based on a two scales argument. We give some  comments on  our
strategy for what concerns  the mechanical  evolution, similar
arguments hold for the chemical one.  We first divide the interval
$[0,T]$ in $M$ subintervals $I_k= [k\d, (k+1) \d]$, $\d:=T/M$, $k
\in \{0,1, \dots, M-1\}$. We  denote by $P^\l _{x,\s,t}$ the law of
the PDMP starting at $(x,\s)$ at time $t$ with $\l$--accelerated
chemical jumps, and by $x_*(\cdot|x,t)$ the solution  of the Cauchy
system
\begin{equation}\label{triestebis}
\begin{cases}
\dot z (s) = \bar F (z(s),s) \,, \qquad s \geq t \,, \\
z(t) =x\,.
\end{cases}
\end{equation}
We recall that $P^\l _{x_0, \s_0, 0}= P^\l _{x_0, \s_0}$. Then we
prove that  given  $\b\in (0,1)$ there exists $M=M(\b)$ such that
for each $k \in \{0,1, \dots , M-1\}$ the following holds: starting
at time $k\d$ in an arbitrary state $(x',\s')$ the random mechanical
trajectory  $\bigl(x(t)\,:\,t \in I_k\bigr)$
%\,:\,k\d\leq t \leq (k+1) \d \bigr)$
 deviates from
$\bigl( x_*(t|x' ,k \d)\,:\, t\in I_k)$ typically less than $\beta
\delta$. This is the content of Lemma \ref{funghetto} below. Having
this result we can derive (\ref{lim1}) of Theorem \ref{LLN} as
follows: in order to compare the mechanical trajectory
$\bigl(x(t)\,:\, t\in [0,T]\bigr)$  with $\bigl(x_*(t|x_0,0)\,:\,
t\in [0,T]\bigr)$ ($x(t)$ being now the random mechanical trajectory
when starting at state $(x_0,\s_0)$ at time zero) we fix $\b>0$,
take $M=M(\b)$ as above and by means of Lemma \ref{funghetto} for
each $k\in \{0,1,\dots, M-1\}$ we compare $x(\cdot)$ restricted to
the time interval $I_k$ with the path $\bigl(x_*(t|x(k\d), k\d)\,:\,
t\in I_k\bigr)$. As second step, we compare this last path with
 $\bigl(
x_*(t|x_0, 0)\;:\, t\in I_k\bigr)$. Due to the Lipschitz property of
the force fields, we will  show  below that
\begin{multline}\label{capitano}
\sup _{t\in I_k} |x_*(t|x(k\d),
k\d)-x_*(t|x_0,0)|\leq\\
C |x_*(k\d|x(k\d), k\d)-x_*(k\d|x_0,0)|=C | x(k\d)
-x_*(k\d|x_0,0)|
%\leq C\sup _{t\in [(k-1)\d, k \d ]} |x_*(t|x(k\d),
%k\d)-x_*(t|x_0,0)|
\,. \end{multline} This bound allows to implement by a recursive
procedure all the above estimates going from one $\d$--subinterval
to the next one.
%, the uniform distance can be
%estimated by a recursive
%This procedure  is explained in details after Lemma \ref{funghetto}.
%At each step the estimate become worst and worst, but for each
%$\d$--subinterval the uniform distance remains smaller than
%$O(\beta)$. The above arguments allow to conclude that tipically the
%mechanical trajectory $\bigl(x(t)\,:\, 0\leq t \leq T\bigr)$ remains
%at uniform distance at most $O(\b)$ from $\bigl( x_*(t)\,:\, 0 \leq
%t \leq T\bigr)$. Due to the arbitrary of $\beta$ this implies
%(\ref{lim1}) of Theorem \ref{LLN}.

In addition to $\delta$, another scale plays a crucial role. Indeed,
in order to prove  Lemma \ref{funghetto} we first divide each time
interval $I_k=[k\d,(k+1)\d]$ in $N$ subintervals $\{I_{k,n}\}_{0\leq
n<N}$,  where $I_{k,n}= [k\d+n\e,k\d+(n+1)\e]$ and $\e:=\d/N$. Then
we prove that the PDMP on $I_{k,n}$  obtained from the original one
by freezing the chemical jump rates at time $k\d+ n \e$ has a not
too large entropy w.r.t. to the original PDPM on $I_{k,n}$. This
entropy estimate allows to bound the probability for a deviation of
order at least $\b \e$ of the mechanical trajectory on $I_{k,n}$
from the expected asymptotic one.

\bigskip

Let us now enter into the technical details of the proof:

\bigskip

\begin{Le}\label{funghetto}
Fix  a  constant  $\b\in (0,1)$, a continuous function
$f:[0,T]\rightarrow \bbR$ and a compact set $\cK \subset \bbR^d$.
Then there exists a positive integer $M$ such that
%Let $\d:= T/m$, $m $ being a positive integer. Then,
%for each fixed $\b>0$, there exists an integer $m_0=m_0(\b)$ such
%that the following holds for all  There exists a positive constant
%$\d_0$ such that, for any $0\leq \d \leq \d_0$, $k \in \bbN$,
%$K\subset \O$ compact subset,
for all  $ \s_0,\s \in \G$, for all $k \in \{0,1, \dots, M-1\}$,
setting $\d=T/M$ it holds
\begin{align}\label{pimpa1}
& \lim _{\l \uparrow \infty}\sup _{x_0 \in \cK} P^\l_{x_0,\s_0,k\d}
\left( \sup _{k\d \leq t \leq k\d + \d} \bigl|
x(t)-x_*(t|x_0,k\d)\bigr|\geq \b \d
\right) =0 \,,\\
&  \lim_{\l \uparrow \infty}\sup _{x_0 \in \cK}
P^\l_{x_0,\s_0,k\d}\left( \left|\int_{k\d}^{k\d+\d}  f(t)
\,\left[\,\chi(\s(t)=\s) - \mu \left(\s\,|\,
x_*(t|x_0,k\d)\,,\,t\right) \,\right]\,dt \right|>\b \d
\right)=0\,.\label{pimpa2}
\end{align}
\end{Le}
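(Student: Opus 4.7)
The plan is to exploit the two--scale subdivision already sketched above, comparing on each sub--subinterval the true PDMP with a ``frozen'' PDMP via a path--space entropy estimate. Combining (\ref{muccamuu}) with Gronwall, I first produce a compact set $\widehat\cK\supset\cK$ (depending only on $\cK$, $\k_1$, $\k_2$ and $T$) in which the mechanical trajectory $x(t)$ is forced to remain throughout $[0,T]$; all Lipschitz constants below will be computed on $\widehat\cK$. For each $\l$ I partition $I_k=[k\d,k\d+\d]$ into $N=N(\l)$ subintervals $I_{k,n}=[k\d+n\e,k\d+(n+1)\e]$ of length $\e=\d/N$, and on each $I_{k,n}$ I introduce the frozen law $\widetilde Q^{\l,n}$ obtained from $P^{\l}_{x_0,\s_0,k\d}$ by replacing the chemical rates $\l r(\s,\s'|x(s),s)$ with the random constants $\l r(\s,\s'|x(k\d+n\e),k\d+n\e)$, while keeping the mechanical ODE and the initial condition at time $k\d+n\e$ unchanged. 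Conditionally on the state at $k\d+n\e$, the chemical component of $\widetilde Q^{\l,n}$ on $I_{k,n}$ is then a time--homogeneous continuous--time Markov chain on $\G$ accelerated by $\l$.

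The key technical step is a Girsanov/entropy bound for pure jump processes. Since $F_\s$ is bounded on $\widehat\cK$ by (\ref{muccamuu}) and $r(\s,\s'|\cdot,\cdot)\in C^1$ by (A3), the true and frozen rate families on $I_{k,n}$ differ pointwise by $O(\e)$; the standard formula for the relative entropy of two jump processes then yields
\begin{equation*}
\ent\!\left(P^{\l}_{x_0,\s_0,k\d}\big|_{I_{k,n}}\;\Big|\;\widetilde Q^{\l,n}\right)\;\leq\; C\,\l\,\e^{3},
\end{equation*}
with $C$ depending only on $\widehat\cK$, so that the accumulated entropy on $I_k$ is $O(\l\d\e^{2})$. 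On the other hand, under $\widetilde Q^{\l,n}$ the classical Chernoff LLN for the empirical occupation of an ergodic time--homogeneous Markov chain observed for a time of order $\l\e$ yields, for every $\h>0$,
\begin{equation*}
\widetilde Q^{\l,n}\!\left(\,\left|\,\tfrac{1}{\e}\!\int_{I_{k,n}}\!\chi(\s(t)=\s)\,dt\;-\;\mu\bigl(\s\,\big|\,x(k\d+n\e),k\d+n\e\bigr)\,\right|>\h\,\right)\;\leq\; e^{-c(\h)\,\l\,\e},
\end{equation*}
uniformly in the starting state in $\widehat\cK\times\G$. Choosing $\e=\e(\l)$ with $\e\to0$ and $\l\e\to\infty$ (e.g.\ $\e=\l^{-1/2}$), the entropy inequality $P(A)\leq(\log 2+\ent)/\log(1/\widetilde Q(A))$ transfers this chemical LLN from $\widetilde Q^{\l,n}$ back to $P^{\l}_{x_0,\s_0,k\d}$, since the accumulated entropy $O(\l\d\e^{2})$ is negligible with respect to the Chernoff exponent $c(\h)\l\e$.

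To obtain (\ref{pimpa1}) I start from
\begin{equation*}
x(t)-x_*(t|x_0,k\d)=\sum_{\s\in\G}\int_{k\d}^{t}\!\bigl[F_\s(x(s),s)\chi(\s(s)=\s)-F_\s(x_*(s),s)\mu(\s|x_*(s),s)\bigr]\,ds,
\end{equation*}
split the integral as $\sum_{n}\!\int_{I_{k,n}}$, and replace $x(s)$, $F_\s(\cdot,s)$ and $\mu(\s|\cdot,s)$ inside each piece by their values at $s=k\d+n\e$ up to an $O(\e)$--error controlled by (\ref{lipF})--(\ref{lipmu}). The remaining discrepancy is exactly the quantity controlled by the chemical LLN of the previous step and contributes $O(\h\d)$ with overwhelming probability; adding and subtracting $\bar F(x(s),s)$ and applying (\ref{lipF}) to $\bar F$ produces in addition a Gronwall term $K\!\int_{k\d}^{t}|x(s)-x_*(s|x_0,k\d)|\,ds$. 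Gronwall's inequality then yields $\sup_{t\in I_k}|x(t)-x_*(t|x_0,k\d)|\leq C(\h+\e)\,\d\,e^{K\d}$, which is below $\b\d$ once $M$ is fixed (so that $e^{K\d}$ is of constant order) and $\h,\e$ are chosen small. Statement (\ref{pimpa2}) then follows from the same chemical LLN together with uniform continuity of $f$ on $[0,T]$ and (\ref{lipmu}) applied to the now--controlled path $x(\cdot)$. The main technical obstacle is the joint calibration of the three scales $M,\,N=N(\l)$ and $\l$: one needs $\l\e\to\infty$ for the frozen Chernoff LLN to bite, $\e\to0$ so that the accumulated entropy $O(\l\d\e^{2})$ stays below $c(\h)\l\e$, and $M$ chosen large first so that $e^{K\d}$ does not swallow the $O(\h+\e)$ smallness; everything else is Gronwall and Lipschitz bookkeeping.
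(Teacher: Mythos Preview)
Your approach is valid and takes a different route from the paper. The paper freezes the chemical rates at the \emph{deterministic} limit trajectory $x_*(j\e)$ rather than at the random position $x(k\d+n\e)$; consequently it must introduce the exit time $\tau$ from the $\b\d$--tube around $x_*$ in order to keep the rate discrepancy under control, and the argument closes by showing $\tau=\d$ with high probability. Your freezing choice makes the rate discrepancy $O(\e)$ unconditionally (since $|x(s)-x(k\d+n\e)|\leq \e\sup|F|$), which is precisely what allows you to run a direct Gronwall and skip the stopping time --- a genuine simplification. For the transfer step, the paper uses Cauchy--Schwarz on the Radon--Nikodym derivative, obtaining $E_{Q^\l}\bigl[(dP^\l/dQ^\l)^2\bigr]\leq e^{c\l\e\d}$ and hence $P^\l(\text{bad event})\leq e^{-c'\l\e}$ with $\e$ and $\d$ \emph{fixed} independently of $\l$; this exponential bound is later reused in the proof of the LDP. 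Your entropy route yields only polynomial decay in $\l$, which is enough for the LLN but would not feed directly into the LDP.

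One calibration issue deserves care. With your example $\e=\l^{-1/2}$, applying the entropy inequality on each $I_{k,n}$ separately and then union--bounding does not work: the additive $\log 2$ in the entropy inequality, summed over $N=\d/\e$ pieces, contributes $N\log 2/(c(\h)\l\e)=O\bigl(\d/(\l\e^2)\bigr)=O(\d)$, which does not vanish as $\l\uparrow\infty$. The fix is easy: either take $\e=\l^{-\a}$ with $\a<1/2$ so that $\l\e^2\to\infty$, or apply the entropy inequality once on all of $I_k$ against the concatenated frozen law; then the single $\log 2$ is harmless and the accumulated entropy $O(\l\d\e^2)=O(\d)$ is indeed negligible compared with the denominator $c(\h)\l\e-\log N\sim c(\h)\l^{1/2}$.
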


\smallskip

 Before proving the above lemma, let us explain how to derive from it
Theorem \ref{LLN}:

\smallskip

\noindent {\sl Proof of Theorem \ref{LLN}}.
% In order to prove(\ref{lim1})
% We fix a positive integer $n$ such that $\d:=T/n\leq \d_0$ and divide the
% time interval $[0,T)$ in $n$ subintervals $\{I_j\}_{0\leq  j<n}$,
% $I_j=[j\d, (j+1)\d)$.

% Fix $\b\in (0,1)$ and take $M$ as in  Lemma \ref{funghetto}. Recall that

Let us prove (\ref{lim1}). We start with some general consideration.
  As proved in Lemma \ref{phelps}, there exists a compact $\cK'$ such that
$x(t) \in \cK'$  for all $t \in [0,T]$, $P^\l_{x_0,\s_0}$--a.s. and  for all $\l>0$. By the same lemma,
there exists a compact $\cK$ containing $\cK'$ such that $x_*(t|x',s)\in \cK$ for all
 $x' \in \cK'$ and $s<t $ in $[0,T]$.
  We take the positive constant $K$ as in \eqref{lipF} for $\cK$ as described above,
   and by taking $K$ large enough we assume that \eqref{lipF} is satisfied also by the averaged field $\bar F$.
Then, we take $M$ as in Lemma \ref{funghetto} (since we
are now only interested in proving (\ref{lim1}), we can fix the function $f$ in Lemma \ref{funghetto}
arbitrarily). We divide the interval $[0,T]$ in $M$ subintervals $I_k=[k\d,
(k+1)\d]$,  where $k=0,1,\dots, M-1$ and $\d=T/M$.
For each $k$, we define
\begin{align*}
& \Delta_k:=\sup _{t \in I_k }\bigl|x(t)-x_*(t|x_0,0)
\bigr|\,,\\
& \cA_k:=  \{ |x(t)- x_*(t|x(k\d), k\d )|<\b \d \, \; \forall t \in
I_k  \}\,.
\end{align*}
Due to Lemma \ref{funghetto} and the Markov property of PDMPs,
 we have that
\begin{equation}
 P^\l _{x_0,\s_0}(\cA^c_k)= E^\l _{x_0,\s_0}
 \bigl[ P^\l _{x_0,\s_0} \bigl( \cA_k^c | x(k\d), \s(k\d)\bigr )\bigr]
\leq \sup_{x \in \cK, \s \in \G } P^\l _{x,\s, k \d} (\cA_k^c) \rightarrow 0 \,.
\end{equation}
This implies that
 \begin{equation}\label{ring}
 \lim _{\l \uparrow \infty} P^\l _{x_0,
\s_0}(\cA^c)=0\,, \qquad \cA:=\cap _{k=0}^{M-1}
\cA_k\,.
\end{equation}

 Assuming the event $\cap_{j=0}^k \cA_j$ to be verified,
since $x_*(t|x_0,0)= x_*(t| x_*(k\d|x_0,0) , k\d)$ and applying Gronwall
inequality as in Lemma \ref{phelps}, we obtain that
\begin{multline}
\D_k
%\sup _{t \in [k\d, (k+1) \d]}\bigl|x(t)-x_*(t|x_0,0) \bigr|
\leq \sup _{t \in I_k}\bigl|x(t)-x_*(t|x(k\d),k\d) \bigr|+\sup _{t
\in I_k}\bigl|x_*(t|x(k\d),k\d)-x_*(t|x_0,0) \bigr|
\\< \b\d+ e^{K\d} | x(k\d)- x_*(k\d|x_0,0)|\leq \b\d +e^{K\d}
\D_{k-1}\,.
\end{multline}
Due to the event  $\cap_{j=0}^k \cA_j$, we can iterate the above
procedure and conclude that $\D_{k-1} <\b\d +e^{K\d}
\D_{k-2}$ and so on. At the end we obtain that the event $\cap_{j=0}^k \cA_j$ implies the event
$$ \cB_k:=\bigl\{ \D_0 <\b\d\text{ and } \D_j< \b\d
+e^{K\d}\D_{j-1} \; \forall j=1,2,\dots , k\bigr\} \,.
$$
%Now we observe that the limit $P^\l _{x_0,\s_0} (\cA_0^c )
%\rightarrow 0$ (proved above) is equivalent to the limit  $P^\l
%_{x_0,\s_0 } ( \D_0 \geq \b \d)\rightarrow 0$. Since moreover we
%know that $P^\l_{x_0,\s_0}(\cA)\rightarrow 1$, we can conclude that
%$P^\l ( \cB) \rightarrow 1 $ where the event $\cB$ is defined as
%$$ \cB:=\bigl\{ \D_0 <\b\d\text{ and } \D_{k}< \b\d
%+e^{K\d}\D_{k-1} \; \forall k=1,2,\dots , M-1\bigr\} \,.
%$$
Setting $z= e^{K\d}$, it is simple to check by induction  that the
event $\cB_k$ implies that  for each $j=0,1,\dots ,k$ it holds
\begin{equation}\label{stralunato}
 \D_j \leq \b \d \bigl( 1+z+z^2+\cdots + z^j \bigr)\leq \b \d \, \frac{z^M-1}{z-1}=\b
\d\,\frac{ e^{KT}-1}{e^{K\d}-1} \leq
 \b (e^{KT}-1)/K
 \end{equation}
(in the last inequality we have used that $e^x-1 \geq x$ for any $x
\geq 0$).
Hence,  $\cA$ implies \eqref{stralunato} for all $j=0,1, \dots, M-1$  and therefore it implies that
\begin{equation}\label{ronfare}
\sup_{0\leq t \leq T} \bigl | x(t)- x_*(t|x_0,0)\bigr|\leq \b \bigl(
e^{KT}-1 \bigr)/K\,.
\end{equation}
Due to arbitrariness of $\b$, this implies (\ref{lim1}).

\bigskip

Let us now prove (\ref{lim2}). We take $M$ as in Lemma \ref{funghetto}, where $f$ is the same function appearing
 in Theorem \ref{LLN} and $\cK$ is defined  as above.
Using the same arguments as above, it is simple
to derive from  Lemma
\ref{funghetto} that $P^\l_{x_0,\s_0}
(\cC)= 1-o(1)$, where $\cC$ denotes the event \begin{multline*}
 \cC =
\Bigl\{ \Bigl|\int_{k\d}^{k\d+\d}  f(t) \,\Bigl[\,\chi(\s(t)=\s) -
\mu \bigl(\s| x_*(t|x(k\d) ,k\d),t\bigr) \,\Bigr]\,dt
\Bigr|\leq \b \d \,, \\
\forall k \in \{0,1,\dots, M-1\} \, \Bigr\} \,.\end{multline*}
By assumption (A3)  we know that there exists a constant $\k >0$ such that
\eqref{lipmu} holds for all $x,y \in \cK$ and for all $t \in [0,T]$. Hence,  we can estimate
\begin{multline}\label{calzini}
\left| \int_{k\d}^{k\d+\d}  f(t) \,\Bigl[ \mu \bigl(\s\,|\,
x_*(t|x(k\d) ,k\d)\,,\,t\bigr) -\mu \bigl(\s\,|\, x_*(t|x_0
,0)\,,\,t\bigr)\,\Bigr]\,dt\right| \leq\\
 \k \|f\|_\infty \d\, \sup_{t
\in I_k}\bigl| x_*(t|x(k\d) ,k\d) -  x_*(t|x_0 ,0)\bigr|\,.
\end{multline}
Trivially,
$$
\bigl| x_*(t|x(k\d) ,k\d) -  x_*(t|x_0 ,0)\bigr|\leq  \bigl|
x_*(t|x(k\d) ,k\d) - x(t)\bigr|+ \bigl|x(t) - x_*(t|x_0
,0)\bigr|\,.$$ If  $\cA$ is verified, the first addendum in the
r.h.s. is bounded by $\b \d$ (recall that $\cA$ implies $\cA_k$),
while the second addendum is bounded by $\D_k\leq \b (e^{KT}-1)/K$.
Therefore, we can conclude that whenever the event $\cA$ is verified
the r.h.s. of \eqref{calzini} is bounded from above by
$$
\k \|f\|_\infty \d\, ( \b \d+ \b (e^{KT}-1)/K)\,.
$$
Using the triangular inequality, we conclude that the event $\cA\cap
\cC$  implies that
\begin{multline}
\Bigl|\int_0^T f(t) \,\Bigl[\,\chi(\s(t)=\s) - \mu \Bigl(\s\,|\,
x_*(t|x_0 ,0)\,,\,t\Bigr) \,\Bigr]\,dt \Bigr|\leq\\
 M\b\d+M \k
\|f\|_\infty \d\, ( \b \d+ \b (e^{KT}-1)/K)= \\T\b \bigl[1+ \k
\|f\|_\infty
 (  \d+  (e^{KT}-1)/K)\bigr]= C(T,K,\k,f) \b\,.
\end{multline}
Due to the arbitrariness of $\beta$ and since $P^\l_{x_0, \s_0}(\cA
\cap \cC)=1-o(1)$, the above estimate implies (\ref{lim2}). This concludes the proof of the averaging principle stated in Theorem
\ref{LLN}.
 \qed

\bigskip

We can now concentrate on the core of the law of large numbers, given
by Lemma \ref{funghetto}:
\bigskip

\noindent {\sl Proof of Lemma \ref{funghetto}}. We stress that $\b $
has to be considered as a fixed constant. We will play with two
length scales: $\d$ and $\e$, defined below. $C,c',\widetilde c,
c_i, \dots$ will denote non random positive constants independent
from $\d$ and $\e$,  that can change from line to line and that can
depend on $\b$. For simplicity of notation we take $k=0$ (the
arguments remain valid in the general case).
% and write $x_*(t)$ for $x_*(t|x_0,0)$.
As the reader can check, in order to prove Lemma \ref{funghetto} we
will consider the process only when the  mechanical trajectory
$x(t)$ lies  inside a given compact set (uniformly in the starting
point $x_0 \in \cK$).  Hence, at cost to take a larger Lipschitz
constant $K$ in \eqref{lipF},  we can assume
 \eqref{lipF}, (\ref{lipr}),\eqref{lipmu} and \eqref{lipgamma} to hold for all $x,y
\in \bbR^d$ with $\k=K$. This allows to much simplify the notation.

\bigskip

We first prove (\ref{pimpa1}) and explain how to choose $M$.  To
this aim we   observe that due to Lemma \ref{phelps}  there exists a
compact subset $\widehat \cK\subset \bbR^d$ such that $
x_*(t|x_0,0)\in \widehat \cK$ for each $t \in [0,T]$ and $x_0\in
\cK$.  We define the compact set  $\cK'$ as
\begin{equation}
 \cK':=\cup _{x \in \widehat \cK} B(x,T) \,,
\end{equation} where $B(x,T)$ denotes the closed ball centered at
$x$ with radius $T$.
%$$ P^\l _{x_0, \s_0 }
%\left(
%(x(t), t) \in \cK'\times [0,T]\;
% \forall t
%\in [0,T]\right)=1 \,, \qquad \forall x_0 \in \cK\,,\;\s_0 \in \G\,.
%$$
Given a  pair $(x,t) \in \cK' \times [0,T]$ we consider the
continuous--time homogeneous Markov chain on $\G$ with transition
rates $ r(\s,\s'):= r(\s,\s'|x,t)$, $\s\not=\s'$.  Note that this
Markov chain is ergodic and has  $\mu(\cdot|x,t)$ as stationary
probability. We call $Q_{\widetilde \s,x,t}$ its law when starting
in the state $\widetilde \s$. We will use the following uniform
large deviation estimate
\begin{multline}\label{fabri}
\limsup  _{u\rightarrow \infty} \sup_{(x,t) \in \cK'\times [0,T],\;
\s ,\s' \in \G} \\\frac{1}{u} \ln Q_{\s',x,t } \left[ \left| \frac{1}{u
} \int_0 ^u \bigl[ \chi ( \s( s ) =\s) - \mu (\s|x,t) \bigr]ds
 \right|\geq  \frac{\b}{20 (a_* +1)|\G|} \right]=:  - W_1<0\,,
\end{multline}
where
\begin{equation}\label{aia}
a_*:= \max _{\s' \in \G} \max _{s \in [0,T]} \max _{x'\in \cK'}  |
F_{\s'} ( x',s) |\,,
\end{equation}
and $W_1$ is a strictly positive constant. The result \eqref{fabri}
follows from lemma \ref{molofaccio} in Appendix \ref{molofacciosec}.
%The above  function $I_{x,t,\s} (\cdot)$  depends on $\s$ and the
%parameters $(x,t) \in \cK' \times [0,T]$. Moreover, it is zero at
%the origin and is positive elsewhere.
% Moreover, it depends on
%$\s$ and the parameters $(x,t) \in \cK' \times [0,T]$. In order to
%stress this dependence we denote it also as $I_{\s,x,t}(\cdot)$.
%We
%define the constant $W_1$ as
%\begin{equation}\label{gatti}
%W_1:=  \inf _{(x, t )\in \cK' \times [0,T]\,,\;\s\in \G} I _{\s,x,t}
%\bigl[\b /20 ( a_*+1) |\G|\bigr]\,.
%\end{equation}
%Since $I_{\s, x,t} (u)$  is continuous in $x,t$, $W_1$ is a
%positive number.

We now introduce a second positive constant $W_2$ defined as
\begin{equation}\label{codroipo000}
W_2=K+2+a_*\,.
\end{equation}
Finally we define the constant $\d:=T/M$, where $M$ is the smallest
positive integer such that
\begin{equation}\label{merlino}
\d\leq  \min\{W_1/(4c_3), 2 \beta /(5W_2)\}\,,
\end{equation}
where the positive constant $c_3$ depends only on the compact $\cK$ and will
be defined in (\ref{nicotina}).
\bigskip

Let us now prove (\ref{pimpa1}) for $k=0$. In order to shorten the
notation we write $x_*(t)$ instead $x_*(t|x_0,0)$. We stress that
$x_*(t)$ depends on $x_0$, although $x_0$ has been omitted in the
notation. We define
%the tube $\cA_\d$ as $$ \cA_\d := \{ (x,t)
the random time $\t$ in terms of  the exit time from the
$\b\d$--tube $\cA_{\b\d}$ around $x_*(t)$:
$$
 \t:=\inf \{ t\geq 0  \,:\, (x(t),t) \not \in \cA _{\b\d}
\}\,,
$$
where
\begin{equation}\label{finire} \cA_{\b \d} := \{ (x,t)\in \bbR
^d \times [0,\d]\,:\, |x-x_*(t)|\leq \b\d \}\,.
\end{equation}
Note that, since $\beta \delta \leq T$, for each $x_0\in \cK$ the
above tube $\cA_{\b \d }$ is included in $\hat \cK$.  We want to
prove that $\t\geq\d$ with probability $1-o(1)$ as $\l \uparrow
\infty$, which is equivalent to (\ref{pimpa1}).  Up to the Markov
time $\t$ the PDMP is determined only by its characteristics
restricted to $\cA_{\b\d}$. Since we will follow the process only up
to time $\t$, due to (\ref{lipF}) and  (\ref{lipr}), at cost of
changing the characteristics outside $\cA_{\b\d}$ without loss of
generality
%when referring to the process up to time $\t$
 we can assume that
\begin{align}
& \left| F_\s (x,t)- F_\s( x_*(t),t) \right|\leq K \d \,,\label{vento1}\\
& \left| r(\s,\s'|x,t)-r(\s,\s'|x_*(t),t) \right|\leq K \d
\,, \label{vento2}\\
& \left |\g_\s (x,t)-\g _\s (x_*(t),t) \right|\leq K
\d\,,\label{vento3}
\end{align}
for all $x \in \bbR^d$ and $t \in [0,\d]$.
%Moreover, since we will analyse the process only inside the tube $\cA_\b$ and since
%$\union _{x_0 \in \cK} \cA _\b (x_0)$ is compact,  at cost to take a larger $K$,
%we can assume that (\ref{limpp}),.., (\ref{limmu} holds everywhere with $\k=K$.
% (here and in what follows
%$K$ denotes a generic Lipschitz constant, that can change from line
%to line).
 Using (\ref{vento1}) and the fact that $\t\leq \d$, one
easily obtains that, given   $m=1,2,\dots d$,
$P^\l_{x_0,\s_0}$--a.s. it holds
\begin{multline}\label{acqua}
x(t)_m =x(0)_m  + \int _0 ^t F_{\s(s)} \bigl( x(s),s\bigr) _m
ds  \\
=x(0)_m + \int _0 ^t F_{\s(s)}\bigl( x_*(s),s \bigr)_m ds +\cE_1\,,
\qquad \forall t \leq \t\,,
\end{multline}
where the error term $\cE_1$ can be  bounded as $|\cE_1|\leq K\d^2$.
%($O(\d^2)$ denotes an error bounded in modulus by $C\d^2$,
%uniformly in $x_0 \in \cK$).

 Given an
integer $N$, we divide the interval $[0,\d]$ in $N$ subintervals of
length $\e:=\d/N$.
%Due to the Lipschitz property of $F$,
% Given $\d$, if  $\e$ is small enough (i.e. $N$
%large enough), for each chemical state $\s$ and each coordinate  $m$
% the piecewise function
% $$g_{\s,m} (s)= \sum _{j=0}^{N-2} F_\s( x_* ( \chi
% _{[j\e,j\e+\e)}(s) + a_{N-1} (\s,m) \chi_{ [(N-1)\e, N ]}(s)$$ such that
%\begin{equation}\label{lorenzo}\bigl|F_{\s} \bigl( x_*(s),s\bigr)_m -g _{\s,m}  (s)\bigr|\leq
%\d\,,\qquad 0 \leq s \leq  \d.
%\end{equation}
We now explain how to fix the constant $\e$.  The first requirement
is that
 $\e\leq \d^2$. Moreover, consider the functions $F_\s(\cdot, \cdot)$ on
$\cK'\times [0,T]$. Since they are uniformly continuous, there
exists $\e_1>0$ such that $ |F_\s(x_1,s_1)-F_\s(x_2,s_2)|\leq \d$
for any $x_1, x_2 \in \cK'$, $s_1,s_2 \in [0,T]$ such that
$|x_1-x_2|\leq \e_1$ and $|s_1-s_2|\leq \e_1$. We require that
$$ \e\leq \min \{ \e_1, \e_1/ C_0 \}\,, $$
where
$$C_0= \sup _{(x,s) \in \cK' \times [0,T] }|\bar F (x,s) |\,, \qquad
\bar F (x,s)= \sum _{\s\in \G} F_\s (x,s) \mu(\s|x,s) \,.
$$
 We note that this condition implies that
\begin{equation}\label{condimento1}
\bigl|F_\s(x_*(s),s)_m- F_\s (x_*(t),t)_m\bigr |\leq \d
\end{equation}
for all $m =1,\dots d$, all $\s \in \G$ and  all $ s,t, \in [0,T]$
such that $|s-t|\leq \e$. Indeed, for such $s,t$ it holds
$$ |x_*(s)-x_*(t)|= \bigl| \int_s^t \bar F ( x_*(u),u) du \bigr| \leq C_0 |s-t| \leq \e_1\,.$$
This allows to derive (\ref{condimento1}) from our choice of $\e_1$.
% (such a choice is possible since the
%function $[0,\d]\ni s \rightarrow F_\s (x_*(s),s)$ is uniformly
%continuous).
%Lipschitz, with the same Lipschitz constant for all
%$x_0\in \cK$. Indeed, given $0\leq s \leq t \leq \d$, it holds
%$$ |F_\s(x_*(s),s) -F_\s(x_*(t),t)| \leq K |x_*(s)-x_*(t)\leq K (t-s) \sup
%_{\cK'\times [0,T] } |F_\s (\cdot,\cdot)|\,.
% $$
%Hence,
By  similar arguments we derive that for a suitable positive
constant $\e_2$ it holds
\begin{equation}\label{vladimir}
|r(\s,\s'|x_*(s),s)-r(\s,\s'|x_*(t),t)| \leq \d \end{equation} for
any   $s,t, \in [0,T]$ such that $|s-t|\leq \e_2$ and for any
$\s\not = \s'$ in $\G$.  We fix $\e$ s.t. $ \e \leq \e_2$.
Similarly, there exists a positive constant $\e_3$ such that
\begin{equation}\label{vissani}
|\mu(\s|x_*(s),s)-\mu(\s|x_*(t),t)| \leq \frac{\b}{20(a_*+1)|\G|}
\end{equation} for any $s,t, \in [0,T]$ such that $|s-t|\leq \e_3$
and for any $\s$ in $\G$.  We fix $\e$ s.t. $ \e \leq \e_3$.

 Writing
 $$
\int _0 ^t F_{\s(s)}\bigl( x_*(s),s \bigr)_m ds= \sum_{\s\in \G }
\int _0 ^t F_{\s}\bigl( x_*(s),s \bigr)_m \chi( \s (s) = \s) ds \,,
$$
% and using the Lipschitz property of $F_\s (x_*(s),s) $   $ and $x_* due to (\ref{ù}) and (\ref{lorenzo}), choosing $\e\leq \d^2 $, it holds
due to  \eqref{acqua}, (\ref{condimento1}) and the condition   $\e
\leq \d^2$, it holds
\begin{equation}\label{acqua1}
x(t)_m  =x(0)_m + \sum _{\s \in \G} \sum _{j=0}^{\lfloor
t/\e\rfloor-1} F_\s (x_*(j\e),j\e)_m   \int _{j\e} ^{j\e+\e}
\chi(\s(s)=\s) ds +\cE_2 \,, \qquad  \forall t \leq \t\,,
\end{equation}
where the error term $\cE_2$ can be  bounded as $|\cE_2|\leq
|\cE_1|+\d^2+ \d^2 a_* $. Above, $ \lfloor t/\e \rfloor $ denotes
the integer part of $t/\e$ and  the sum over $j$ is set equal to
zero if $\lfloor t/\e \rfloor =0$. The condition $\e\leq \d^2$ is
necessary in order to bound the error term $ \int_{\lfloor
t/\e\rfloor \e} ^{t} F_{\s(s)}( x_*(s),s) _m ds$.
% by $C \d^2$ ($C$ being the same constant for all   $x_0 \in\cK$).

\smallskip

We claim that there exist  positive constants $c,c'$ independent of
$\e$ and $\d$ such that the $\l$--accelerated PDMP with unrescaled
characteristics satisfying (\ref{vento1}), (\ref{vento2}) and
(\ref{vento3}) fulfills the bound
\begin{equation}\label{stellina}
%\sup _{j=1,2,\dots,N-1} \sup_{\s'\in \G} \sup _{x_0\in \cK} \sup_{x'
%\in \bbR^d:|x'-x_*(j\e |x_0,0) \leq \b \d}
P^\l _{x',\s',j \e} \left( \left|\int _{j\e}^{j\e +\e}\bigl [\,\chi(
\s(s)=\s) - \mu(\s| x_*(s|x_0,0),s) \,\bigr]ds\right|>\frac{\b\e}{10
(a_*+1) |\G|}\right)\leq e^{-c \l \e}\,,
\end{equation}
for any $ j =1,2,\dots,N-1$, any  $x_0 \in \cK$, any  $ x' \in
\bbR^d$ s.t. $|x'-x_*(j\e|x_0,0)|\leq \b \d$, for any $ \s,\s' \in
\G$ and for any $\l \geq c'/\e$. We recall that $a_*$ has been
defined in (\ref{aia}). Above we  used again the notation
$x_*(\cdot|x_0,0)$ in order to stress the dependence on $x_0$.

\smallskip

 Before proving (\ref{stellina}) let us explain how it
allows to conclude the proof of Lemma \ref{funghetto}. First we show
that, due to (\ref{acqua1}),  (\ref{stellina}) and   the Markov
property,
 with probability at least $1-(\d/\e) e ^{-c\l \e}|\G|$ it holds
\begin{equation}\label{acqua2}
x(t)_m  =x(0)_m +
 \sum _{\s \in \G} \sum _{j=0}^{\lfloor t/\e \rfloor-1 }
 F_\s (x_*(j\e),j\e) _m\int _{j\e} ^{j\e+\e}
 \mu ( \s|x_*(s),s) ds +\cE_3\,, \qquad  \forall
t \leq \t \,,
\end{equation}
where the modulus of the  error $\cE_3$ can be bounded  as
$|\cE_3|\leq |\cE_2|+\b \d /10$.  To this aim, we define the event
$\cB_j(\s)$ for $j=0,1,\dots, N-1$ as $ \cB_j (\s) := \cC _j (\s)
\cap \{j\e+\e\leq \t\}$ where
\begin{equation}\label{elezioni}
 \cC_j (\s) :=\left
\{\left|\int _{j\e}^{j\e +\e}\bigl [\,\chi( \s(s)=\s) - \mu(\s|
x_*(s),s) \,\bigr]ds\right|>\frac{\b\e}{10 (a_*+1) |\G|}\right\}\,.
\end{equation}
Conditioning on time $j\e$ and using the Markov property we can
estimate
$$P^\l_{x_0,\s_0} ( \cB_j(\s) ) = P^\l_{x_0, \s_0} ( \cC_j(\s)
\,;\,j\e+\e\leq \t )= E^\l_{x_0, \s_0} \left[ P_{x(j\e), \s(j\e), j
\e} ( \cC_j (\s) )\chi(j\e +\e \leq \t)\right]\,.$$ At this point,
we observe that the condition $j\e\leq \t$ implies that
$|x(j\e)-x_*(j\e)|\leq \b \d$, thus allowing to estimate the
probability $ P_{x(j\e), \s(j\e),j\e} ( \cC_j (\s) )$ from above by
$ e^{-c\l \e}$ due to (\ref{stellina}). In particular, we obtain
that
$$ P^\l _{x_0, \s_0} ( \cB ) \leq N |\G| e^{-c\l \e} \,, \qquad \cB:
= \cup _{\s\in \G} \cup _{j=0}^{N-1} \cB _j (\s)\,.
$$
Finally, we note that the event $\cB^c$ implies  for any $\s$ that
$C_j(\s)$ is not fulfilled whenever the interval $[j\e, j\e +\e]$ is
included in $[0, \t]$. Hence, in this case in (\ref{acqua1}) one can
substitute $\chi ( \s(s) = \s )$ by  $\mu(\s|x_*(s),s)$   with an
error bounded by $\b \e N /10= \b \d/10  $. This leads to
(\ref{acqua2}).

\smallskip

By applying again (\ref{condimento1}), (\ref{acqua2}) implies that
\begin{equation}\label{acqua3}
x(t)_m  =x(0)_m + \sum_{\s\in \G}\int_0 ^t F_{\s} (x_*(s),s)_m \mu (
\s|x_*(s),s)ds +\cE _4= x_*(t)_m+  \cE _4\,, \qquad \forall t \leq
\t\,,
\end{equation}
where the error $\cE_4$ can be bounded as $|\cE_4|\leq
|\cE_3|+\d^2$. Collecting all the previous estimates, we conclude
that for all $x_0\in \cK$ it holds $ |\cE_4| \leq \b\d/10+ W _2
\d^2$, the constant $W_2$ being defined in (\ref{codroipo000}).
Hence, due to our choice (\ref{merlino}), we can conclude that
$|\cE_4| \leq \b \d /2$.
 Hence, taking $t=\t$ in (\ref{acqua3}), we conclude that with
probability $1-(\d/\e) e ^{-c \l \e}|\G|$ it must be $ |x(\t) -
x_*(\t)|\leq \b \d /2$. Due to the continuity of the mechanical
trajectories and the definition of $\t$, this implies that $\t=\d$.
Coming back to (\ref{acqua3}) with this additional information we
get (\ref{pimpa1}) with $k=0$. As already remarked, the same
arguments allow to prove (\ref{pimpa1}) for a generic $k $.

\smallskip We point out that by the above method we have  approximated the random
 integral
 $$
\int _{k\d} ^{k\d+\d} F_\s (x (s), s) _m \chi (\s(s)=\s) ds$$
 by the new integral
$$\int _{k\d} ^{k\d+\d} F_\s (x_*(s),s)_m \mu(\s|x_*(s),s)ds\,.
$$ Hence, the proof of (\ref{pimpa2}) is completely analogous, since
it is enough to replace the force field with the test function
$f(s)$.

% In order to prove (\ref{pimpa2}) it is enought
%approximate the continuous function $f(t)$ by piecewise constant
%functions as in (\ref{lorenzo}) and then applying (\ref{stellina})
%again.

\bigskip

It remains now to  prove  (\ref{stellina}). In order to simplify the
notation, we write $P^\l$ for the law $P^\l _{x',\s',j \e}$ of the
$\l$--accelerated PDMP having unrescaled characteristics that
 satisfy (\ref{vento1}), (\ref{vento2}) and (\ref{vento3}), starting
in the state $(x',\s')$ at time $j\e$ and evolving up to time
$j\e+\e$. In addition, we write  $Q^\l$ for the law of the
$\l$--accelerated  PDMP restricted to the time interval $[j\e,
j\e+\e]$, starting in the state $(x',\s')$ at time $j\e$ and with
characteristics $\bigl(F ,\l \widetilde r\bigr)$, where the new
unrescaled transition rates $\widetilde r$ are constant and are
defined as
\begin{align*}
& \widetilde r (\s_1, \s_2):=\widetilde p(\s_1,\s_2) \widetilde \g_{\s_1}\,,\\
& \widetilde p(\s_1,\s_2):= p(\s_1,\s_2\,|\, x_*(j\e) , j \e)\,,\\
&  \widetilde \g_{\s} := \g_{\s}( x_*(j\e), j \e)\,.
\end{align*}
Namely, the above rates correspond to the original rates read along
the asymptotic trajectory $x_*$ and  frozen at time $j\e$.
% In other words, $Q^\l$ differs from $P^\l$ in the fact the the
%chemical transition rates have been frozen at time $j\e$.
Note that this new PDMP is not coupled: the chemical evolution is a
continuous--time homegeneous  Markov chain, while the mechanical
evolution is a function of the chemical one.  One can compute and
bound the Radon--Nikodym derivative $dP^\l/dQ^\l$. Indeed, if
$\bigl(x(t),\s(t)\bigr)$ is an element in the Skohorod space
$D\bigl( [j\e, j\e+\e], \bbR^d \times \G\bigr)$ with $n$ jumps at
times $\t_1<\t_2<\cdots < \t_n$, setting $\t_0=j \e $, $\s_0:=\s'$,
$x_0 := x'$ and
$$ \s _i :=\s(\t_i)\,, \; x_i := x( \t_i)\,, \; i=1,2, \dots,
n\,,$$ one has
\begin{multline}\label{plasmon}
\frac{dP^\l}{dQ^\l} \left(x(\cdot),\s (\cdot)\right)=\\
\left[ \prod _{i=1} ^n \frac{ r(\s_{i-1}, \s_i |x(\t_i),\t_i )
}{\widetilde r ( \s _{i-1} ,\s_i) }\right]
 \exp
 \left\{ - \l \int_{j\e} ^{j\e+\e}
\left( \,\g_{\s(s) } ( x(s),s    )- \widetilde \g _{\s(s)} \right)
ds\right\}\,.
\end{multline}
The term in the square bracket can be rewritten as
\begin{equation}\label{plasmonbis}
\prod _{i=1} ^n \frac{ r (\s_{i-1} , \s_i |x(\t_i),\t_i )
}{\widetilde r (\s _{i-1} ,\s_i) }= \exp \left\{\sum_{i=1}^n
 \left[    \ln  r (\s_{i-1} ,\s_i |x(\t_i),\t_i )  - \ln \widetilde r (\s _{i-1} ,\s_i)\right]  \right\}\,.
\end{equation}
%Since $|x(\t_i-)-x(i\e)\leq C |\t_i -i\e|\leq C\e$,
Due to \eqref{lipr}, (\ref{vladimir})  and the assumption $\e\leq
\e_2$ we conclude that
\begin{eqnarray*}
& &  \left|\, r (\s_{i-1} ,\s_i |x(\t_i),\t_i )  - \widetilde r (\s
_{i-1} ,\s_i)\,\right| = \left|\, r (\s_{i-1} ,\s_i |x(\t_i),\t_i )
-  r (\s _{i-1} ,\s_i| x_*(j\e),j\e)\,\right| \\
 & & \leq\left|\, r (\s_{i-1} ,\s_i |x(\t_i),\t_i )
-  r (\s _{i-1} ,\s_i| x_*(\t_i),\t_i)\,\right|\\
& & +\left|\, r (\s_{i-1} ,\s_i |x_*(\t_i),\t_i ) -  r (\s _{i-1}
,\s_i| x_*(j\e),j\e)\,\right|\leq (K+1)\d\,.
\end{eqnarray*}

By applying Taylor expansion to the $\log$ function and using
(\ref{plasmonbis}), we get that the term in the square
 bracket in (\ref{plasmon}) is bounded from above by $e^{c_0 n  \d}$
 where $n$ denotes the number of chemical jumps in the interval
 $[j\e,j\e+\e]$. The constant $c_0$ does not depend on $\b, \e, \d,j$ and
is the same for all $x_0\in \cK$ and all pairs $(x',\s')$ as in
\eqref{stellina}.
 Similarly one gets that the exponential in
(\ref{plasmon}) is bounded from above by $e^{ c_1\l \e\d} $, where
the constant $c_1$   does not depend on $\b, \e, \d,j$ and is the
same for all $x_0\in \cK$ and all pairs $(x',\s')$ as in
\eqref{stellina}.
 Moreover, setting
$$ C:= \max _{\s \in \G} \max_{(x,s) \in \cK' \times [0,T]} \g_\s (x,s)\,,$$
  %Since the unrescaled
%rates $\{r(\s,\s'|x,s)\,:\, \s,\s'\in \G, x\in \cK', s \in [0,T]\}$
%are bounded,
 the random variable
 $n$ is stochastically dominated by a Poisson random variable  $Z$ with
 mean  $C\l \e$. Recalling that
 $E(e^{aZ})=e^{ C\l \e (e^a-1)}$  and taking $a=2c_0 \d$,  we conclude that
 \begin{equation}\label{nicotina}
 E_{Q^\l} \left( \left| \frac{d P^\l}{d Q^\l}\right|^2 \right) \leq
 e^{2 c_1\l\e \d+ C \l \e ( e^{2c_0\d}-1)}\leq e^{c_3 \l \e \d}\,, \quad c_3:=
2c_1 + 2c_0 C \frac{e^T-1}{T}\,.
\end{equation}
Above we have used the inequality $e^x-1\leq \frac{e^T-1}{T} x $
valid for all $x\in [0,T]$, which follows from the convexity of $x\rightarrow e^x$.

Due to (\ref{vissani}) and our assumption $\e \leq \e_3$ we can
bound
$$
\int _{j\e} ^{j\e+\e} \left| \mu (\s|x_*(s) ,s )- \mu (\s |x_*(j\e),
j\e) \right|ds \leq \frac{\b\e}{20(a_*+1)|\G|}\,.
$$
 Hence, calling $\cD$ the event
$$ \cD:=\left\{  \left|\int _{j\e}^{j\e +\e}\bigl
[\,\chi(\s(s)=\s)  - \mu(\s| x_*(j\e),j\e)
\,\bigr]ds\right|>\frac{\b\e}{20 (a_*+1) |\G|}\right\}
$$
in order to conclude the proof of (\ref{stellina}) we  need to
 bound $P^\l (\cD)$.
To this aim, we  write $Q_{\s'}$ for the law of the continuous--time
Markov chain on $\G$ starting at $\s'$, jumping from $\s_1$ to
$\s_2$  with transition rate $\widetilde r ( \s_1, \s_2)$.
 Note that
$\mu(\cdot |x_*(j\e), j\e)$ is the invariant measure for this Markov
chain. Then
\begin{equation}\label{fabrixxx}
Q^\l \left[\cD\right]=
 Q_{\s'}\left[ \left|  \frac{1}{\l\e} \int_0 ^{\l\e} \bigl[
\chi ( \s( s ) =\s) - \mu (\s|x_*(j\e), j\e) \bigr]ds
 \right|> \frac{\b}{20 (a_* +1)|\G|} \right]
%\\
% \leq  e^{-\l \e I\bigl[\b /10
%( a_*+1) |\G|\bigr]}\,,
\end{equation}
We note that $x'$ as in \eqref{stellina} must belong to the compact
set $\cK'$. Hence, due to (\ref{fabri}), if $\l \e\geq c_4$ ($c_4$
being independent on $\e,\d$ and being the same for all $(x',\s')
\in \cK' \times \G$), then
\begin{equation}\label{brunbrun}
 Q^\l \left[ \cD\right] \leq
e^{- \e \l W_1 /2}\,,
\end{equation}
where $W_1$ has been defined in (\ref{fabri}).

 Finally, we can apply Schwarz inequality together with (\ref{nicotina}) and (\ref{brunbrun})
  in order to
conclude that
\begin{multline}
P^\l [\,\cD\,]= E_{Q^\l}\Bigl[ \, \frac{dP^\l }{d Q^\l} \cdot\chi
_{\cD}\Bigr]\leq E_{Q^\l}\Bigl[ \,\Bigl( \frac{dP^\l }{d Q^\l}
\Bigr)^2\Bigr]^{1/2} Q^\l (\cD)^{1/2} \leq
 \exp\Bigl\{ -\l \e \bigl[W_1/4- c_3  \d/2  \bigr] \Bigr\}\,.
\end{multline}
Since by  definition (\ref{merlino})  $c_3 \d /2 \leq W_1/8$, we
obtain that $P^\l [\cD]\leq e^{-\l \e W_1/8}$. This implies
(\ref{stellina}) with $c=W_1/8$ and $c'=c_4$.

\qed

%%%%%%%%%%%%%%%%%%%%%%%%%%%%%%%%%%%%%%%%%%%%%%%%%%%%%%%%%%%%%%%%%%
%%%%%%%%%%%%%%%%%%%%%%%%%%%%%%%%%%%%%%%%%%%%%%%%%%%%%%%%%%%%%%%%%%
%
%   PROOF OF THE ldp
%%%%%%%%%%%%%%%%%%%%%%%%%%%%%%%%%%%%%%%%%%%%%%%%%%%%%%%%%%%%%%%%
%%%%%%%%%%%%%%%%%%%%%%%%%%%%%%%%%%%%%%%%%%%%%%%%%%%%%%%%%%%%%%%

\section{Proof of  Theorem \ref{LDP}}\label{dim_LD}

We have now all the tools in order to prove the LDP.  We recall that
in Section \ref{var_problema} we proved (\ref{variazione2}) assuming
\eqref{variazione1}. Here we start by analyzing the Radon--Nikodym
derivative of the PDMP w.r.t. a perturbed version. To this aim let
$V=\bigl\{V_\s\bigr\}_{\s\in \G} $ be a family of $C^1$ functions
$V_\s:[0,T]\rightarrow \bbR$ parameterized by $\s \in \G$ and call
$V'_\s(s):=\frac{d V_\s (s)}{d s}$ the corresponding derivatives. We
introduce some perturbed rates according to the following
definitions
\begin{align} & \widetilde r (\s,\s'|x,s):= r
(\s, \s'|x,s) e ^{V_{\s'}(s)-V_\s(s)}\,,\label{alig}
\\
& \widetilde \g_\s(x,s):= \sum _{\s'\in \G} \widetilde
r(\s,\s'|x,s)\,. \nonumber
\end{align}
Writing $P^{\l,V}_{x_0,\s_0}$ for the law of the  PDMP with
characteristics $\bigl(F, \l \widetilde r\bigr)$ and denoting by
$\t_1< \t_2<\dots <\t_n$
 the jump times of the path $\s(\cdot)$ in the time interval $[0,T]$, it holds
\begin{multline}\label{RN}
\frac{dP^\l_{x_0,\s_0}}{d P^{\l,V}_{x_0,\s_0}}\bigl(x, \s\bigr)=\exp
\left\{ \sum _{i=1}^n [V_{\s(\t_i-)}(\t_i)-V_{\s(\t_i)}
(\t_i)]\right\}
\cdot\\
\exp\left\{ -\l \sum _{\s'\in \G} \int _0^T  r(\s(s) ,\s'|x(s),s)
 \bigl( 1- e
^{ V_{\s'}(s)-V_{\s(s)}(s)} \bigr) ds\right\}
 \,.
\end{multline}
In  order to estimate the first exponential in (\ref{RN}) we observe
that
$$
\sum _{i=0}^n \int_{\t_i}^{\t_{i+1}} V'_{\s(s)}(s) ds =
\sum _{i=0}^n \left[ V_{\s (\t_{i+1}-)}(\t_{i+1} ) -V_{\s(\t_i)}(
\t_i )\right]\,,
$$
where we set $\t_0:=0$ and $\t_{n+1}:=T$.
This implies that
$$
\exp \left\{ \sum _{i=1}^n [V_{\s(\t_i-)}(\t_i)-V_{\s(\t_i)}
(\t_i)]\right\}=\exp \left\{ -\left[ V_{\s(T-)} (T)-V_{\s(0) }(0)-
\int_0 ^T V'_{\s(s)}(s) ds \right]\right\}\,.
$$
Writing $Q^{\l,V}_{x_0,\s_0}$ for the law of the perturbed process
on $\Upsilon$ the above computations give
\begin{equation}\label{speriamoinbene}
\frac{ d Q^\l_{x_0,\s_0}}{d Q^{\l,V}_{x_0,\s_0}} \bigl(x,
\rho\bigr)= \exp \left\{ -\l \left[ J_V(x,\rho) +O(1/\l)
\right]\right\}\,,
\end{equation}
where
\begin{equation}
J_V(x,\rho)=  \sum_{\s \in \G}\sum_{\s'\in\G} \int_0^T \rho(s)_\s
 r(\s,\s'|x(s),s)
 \bigl( 1- e
^{ V_{\s'}(s)-V_\s(s)} \bigr)ds
\end{equation}
and $O(1/\l)$ denotes a quantity bounded in modulus by $c/\l$, $c$
being a   positive constant depending only on $\{V_\s\}_{\s\in \G}$ and $T$.

We point out that the function $J_V:\Upsilon \rightarrow \bbR$  is
continuous. Indeed, if $(x^{(n)}, \rho ^{(n)})$ converges to $(x,
\rho)$ then \begin{multline}
 \left|J_V\bigl(x^{(n)}
,\rho^{(n)}\bigr)- J_V(x, \rho) \right|\leq \\
\sum_\s \sum_{\s'} \left|\int_0 ^T \bigl( \rho_\s ^{(n)
}(s)-\rho_\s(s)\bigr) r(\s,\s'|x(s),s)
 \bigl( 1- e
^{ V_{\s'}(s)-V_\s(s)} \bigr)ds\right|+ \\c(V) \sum_\s \sum_{\s'}
 \int_0^T \rho^{(n)}_\s(s)
 \left| r(\s,\s'|x(s),s)- r(\s,\s'|x^{(n)}(s),s) \right|ds\,.
\end{multline}
Since $\rho^{(n)  }_\s \rightarrow \rho_\s$ in $L[0,T]$ and since
the transition rates are continuous, the first expression in the
r.h.s. goes to zero as $n\rightarrow \infty$. Since
$\|x^{(n)}-x\|_\infty\rightarrow 0$, by the continuity of the
transition rates and the Dominated Convergence Theorem the second
expression in the r.h.s. goes to zero, thus concluding the proof of
the continuity of $J_V$.

\smallskip

Consider the function $J(x,\rho)$ defined in (\ref{variazione1}).
This can be rewritten as
\begin{equation}\label{variazione1bis}
J  ( x, \rho )= \sup _z \sum_\s \sum _{\s' } \int_0 ^T \rho_\s (s) r
(\s,\s'|x(s),s )\left [ 1-\frac{z_{\s'}(s)}{z_\s(s)} \right] ds \,,
\end{equation}
where the supremum is taken over the family of  measurable functions
$\{z_\s\}_{\s\in \G}$,  $ z_\s:[0,T]\rightarrow (0,\infty)$.
Approximating measurable functions by bounded $C^1$ functions,
% Moreover, since
%only the ratio  $z_{\s'}(s)/z_\s(s)$ is relevant, by homogeneity we
%can assume that the above functions $z_\s $ are bounded. Since
%bounded positive $C^1$ functions are $L^1$--dense in the space of
%bounded positive measurable functions,
 we obtain that the functional
$J(x,\rho)$ defined in (\ref{variazione1}) can be expressed as
\begin{equation}\label{avent}
J(x,\rho)= \sup_V J_V(x,\rho)\,, \qquad (x,\rho) \in \Upsilon\,.
\end{equation}

\noindent $\bullet$ {\em Regularity of $J$}. Trivially
$J(x,\rho)<\infty$ for each $(x,\rho)\in \Upsilon$.
Moreover, $J$ is lower semi--continuous since due to the previous
observations it is the supremum of the family of continuous
functions $J_V$.

\bigskip

\noindent $\bullet$ {\em Proof of the upper bound
(\ref{spiegelupper})}. Let us start with a generic subset  $U\subset
\Upsilon$. Given a family $V$ of $C^1$ functions $\{V_\s(s) \}_{\s\in \G}$,
we can bound
\begin{multline*}
Q^{\lambda}_{x_0,\s_0}(U) =  Q^{\lambda,V }_{x_0,\s_0}\left(\frac{d
Q^{\lambda}_{x_0,\s_0}}{d Q^{\lambda,V}_{x_0,\s_0}} \chi_U\right)
\leq e^{-\lambda\inf_{(x,\rho)\in
U}\left[J_V(x,\rho)+O\left(\lambda^{-1}\right)\right]} Q^{\lambda,V}_{x_0,\s_0}(U) \leq \\
  e^{-\lambda\inf_{(x,\rho)\in
U}\left[J_V(x,\rho)+O\left(\lambda^{-1}\right)\right]}\,.
\end{multline*}
This implies that
$$
\limsup_{\lambda \to \infty}{1 \over \l}\log
Q^{\lambda}_{x_0,\s_0}(U)\leq -\inf_{(x,\rho)\in U}J_V(x,\rho)\,.
$$
This holds for any choice of the functions $V_\s$. Optimizing over
the $V_\s$ we get
\begin{equation}\label{fleurs}
\limsup_{\lambda \to \infty}{1\over \l}\log
Q^{\lambda}_{x_0,\s_0}(U)\leq -\sup_{V}\inf_{(x,\rho)\in
U}J_V(x,\rho)\,.
\end{equation}
At this point, we would like to invert the supremum and the infimum
in the r.h.s. if $U$ is given by some closed subset $C\subset
\Upsilon$.
  To this aim we observe that (i) $C$ is compact since it is a
closed subset of the compact space $\Upsilon$ (see Lemma
\ref{annozero}), (ii) estimate (\ref{fleurs}) holds for each
$U\subset \Upsilon$ and in particular for each open subset $U\subset
\Upsilon$, (iii) $J_V(x,\rho)$ is continuous on $\Upsilon$ for each
$V\in C^1[0,T]^\G$. Hence, we can apply Lemma 3.3 in \cite{KL} (note
that $J_\b(\mu)$ there coincides with our $-J_V(x,\rho)$), which
together with (\ref{avent}) implies the upper bound
$$
\limsup_{\lambda \to \infty}{1 \over \l}\log
Q^{\lambda}_{x_0,\s_0}(C)\leq -\inf_{(x,\rho)\in C}\sup _V
J_V(x,\rho)=-\inf_{(x,\rho)\in C}J(x,\rho)=-J(C)\,.
$$

\bigskip

\bigskip

\noindent $\bullet$ {\sl Proof of the lower bound
(\ref{spiegeldown}).}

We first introduce a special subset $\cB$ of $\Upsilon$ as $$\cB =
\bigl \{ (x,\rho)  \in \Upsilon \,:\, \rho _\s \in C^1[0,T] \text{
and }  \rho_\s(t)>0  \;\forall \s \in \G,\; t \in [0,T]\bigr\}\,.
$$
As shown in Lemma \ref{ucraina} in the Appendix, $\cB$ is a dense
subset of $\Upsilon$.

Let $O$ be an open subset of $\Upsilon$ and fix $(x^*,\rho^*)\in
O\cap \cB $ (note that $(x^*,\rho^*)$ exists since $\cB$ is dense in
$\Upsilon$).
 Define
\begin{equation}
\widetilde{V}_\s(s) = \ln \widetilde z _\s\bigl( \rho^*_\s (s), x^*(s),s)\,,
\end{equation}
where $\widetilde z _\s$ has been defined in (\ref{radiodue}). Since
rates are assumed to be $C^1$ (see assumption (A3)), since $\{
\rho^*_\s (s) r(\s,\s' |x^*(s), s)\}_{\s,\s'}$ belongs to the set
$\cS$ defined at the beginning of  Section \ref{var_problema} (see
assumption (A2))
 and due to
Lemma (\ref{pierbaubau}) , we get that $\widetilde{V}_\s \in
C^1[0,T]$. We take as perturbation
$\widetilde{V}=\{\widetilde{V}_\s\}_{\s \in \G}$.

Due to the fact that $O$ is open we have for any $\delta$ small enough
\begin{equation*}
Q^{\lambda}_{x_0,\s_0}(O)\geq
Q^{\lambda}_{x_0,\s_0}(B_{\delta}(x^*,\rho^*))\,, \qquad \forall \l
>0\,,
\end{equation*}
where $B_{\delta}(x^*,\rho^*)$ is the ball in $\Upsilon$ of radius
$\delta$ and center $(x^*,\rho^*)$. We now use  the following
estimate
\begin{multline}\label{alice}
Q^{\lambda}_{x_0,\s_0}\left(B_{\delta}(x^*,\rho^*)\right) =
Q^{\lambda, \widetilde{V} }_{x_0,\s_0}\left(\frac{d Q^{\lambda}_{x_0,\s_0}}{d
Q^{\lambda,\widetilde{V}}_{x_0,\s_0}}\chi_{B_{\delta}(x^*,\rho^*)}\right)
 \geq\\  Q^{\lambda,\widetilde{V}}_{x_0,\s_0}\left(B_{\delta}(x^*,\rho^*)\right)\inf_{(x,\rho)\in
B_{\delta}(x^*,\rho^*)}\frac{d Q^{\lambda}_{x_0,\s_0}}{d
Q^{\lambda,\widetilde{V}}_{x_0,\s_0}}(x,\rho)\,.
\end{multline}
Due to the particular choice of $\widetilde{V}$ (see Section
\ref{var_problema}), the law of large numbers stated in Proposition
\ref{LLN} implies that
$$
\lim_{\lambda \to
\infty}Q^{\lambda,\widetilde{V}}_{x_0,\s_0}
\left(B_{\delta}(x^*,\rho^*)\right)=1\,.
$$
Hence,  we can derive from  (\ref{alice}) and (\ref{speriamoinbene})
that
$$
\liminf_{\lambda \to \infty}\frac{1}{\l} \log
Q^{\lambda}_{x_0,\s_0}\left(O\right)\geq -\sup_{(x,\rho)\in
B_{\delta}(x^*,\rho^*)}J_{\widetilde V}(x,\rho)\,.
$$
Due to the fact that this bound holds for any $\delta$ small enough
we also have that
$$
\liminf_{\lambda \to \infty}\frac{1}{\l} \log Q^{\lambda}_{x_0,\s_0}
\left(O\right)\geq -\lim_{\delta \to 0}\sup_{(x,\rho)\in
B_{\delta}(x^*,\rho^*)}J_{\widetilde V}(x,\rho)\,.
$$
From the continuity in $(x,\rho)$ of $J_{\widetilde V}(x,\rho)$ we
deduce that
$$
\lim_{\delta \to 0}\sup_{(x,\rho)\in
B_{\delta}(x^*,\rho^*)}J_{\widetilde V}(x,\rho) =J_{\widetilde V}(x^*,\rho^*)=J(x^*,\rho^*)
$$
(the last identity follows from the definition of $\widetilde V$  and the
results of Section \ref{var_problema}).

 Optimizing over all possible $(x^*,\rho^*)\in O \cap \cB $
we finally get
$$
\liminf_{\lambda \to \infty}\frac{1}{\l} \log
Q^{\lambda}_{x_0,\s_0}(O)\geq -\inf_{(x,\rho)\in O\cap \cB
}J(x,\rho)\,.
$$
In order to conclude the proof of the lower bound we only need to
show that
\begin{equation}\label{arrivederci}
\inf_{(x,\rho)\in O\cap \cB }J(x,\rho)=\inf_{(x,\rho)\in
O}J(x,\rho)\,.
\end{equation} Trivially, the l.h.s. is not
smaller than the r.h.s.  In order to prove the opposite inequality,
fix $(x,\rho)\in \Upsilon$ and fix a sequence $( x^{(n)}, \rho
^{(n)})\in \cB$ converging to $(x,\rho)$ in $\Upsilon$. By the
construction of this approximating sequence given
 in the proof of Lemma \ref{ucraina}, we can assume that
 there exists a set $U \subset [0,T]$ whose
complement has zero Lebesgue measure such that $\rho ^{(n) }_\s (s)
\rightarrow \rho _\s (s)$ and $ x^{(n) }(s) \rightarrow x(s)$ for
all $s \in U$ and $\s \in \G$. Due to the continuity of the
transition rates, this implies  that
\begin{equation}\label{convergo} c^{(n) } (s)[\s,\s']:=
\rho^{(n)}_\s (s) r(\s,\s'|x^{(n)}(s),s) \rightarrow \rho_\s (s)
r(\s,\s'|x(s),s)=:c(s)[\s,\s']\,, \end{equation} for all $s \in U$
and all  $\s,\s'\in \G$. Recall the function $\cJ$ defined in
\eqref{emma28},  Section \ref{var_problema}. Then,
 due
to (\ref{convergo}) and the continuity of $\mathcal{J}$ (see Lemma
\ref{pierbau}), we obtain that
\begin{equation}\label{dentino}
j\bigl( \rho^{(n)} (s), r(\cdot,\cdot|x^{(n)}(s),s) \bigr)=
\mathcal{J} \bigl( c^{(n)}(s) \bigr) \rightarrow
\mathcal{J}\bigl(c(s) \bigr) =j\bigl( \rho(s), r(\cdot,\cdot|x(s),s)
\bigr)
\end{equation}
for each $s\in U$. Since $\|x^{(n)} - x \|_\infty $ goes to zero as
$n\uparrow \infty$ and due to the continuity of the transition
rates, given $\e>0$ we can find $n _0$ such that
$$ \sup _{n\geq n_0}\sup_{s\in [0,T] }\sup _{\s,\s'} r(\s,\s'|x^{(n)}(s),s) \leq
\sup _{s \in [0,T]}\sup _{\s,\s'} r(\s,\s'|x(s),s) +\e=:C\,.$$ Due
to the definition \eqref{pranzo} of $j$, this implies that
\begin{equation}\label{canino}
\begin{cases}
j\bigl( \rho^{(n)} (s), r(\cdot,\cdot|x^{(n)}(s),s) \bigr)\leq C |\G|^2 \,,\\
j\bigl( \rho (s), r(\cdot,\cdot|x (s),s) \bigr)\leq C|\G|^2\,,
\end{cases}
\end{equation}
for all $ s$ in  $[0,T]$ and $ n \geq n_0$. Now, due to
(\ref{dentino}), (\ref{canino}) and the dominated convergence
theorem we can conclude that
$$ J\bigl( x^{(n)}, \rho^{(n)}\bigr) = \int_0^T j\bigl( \rho^{(n)} (s), r(\cdot,\cdot|x^{(n)}(s),s)
\bigr)ds \rightarrow \int_0^T j\bigl( \rho (s), r(\cdot,\cdot|x
(s),s) \bigr)ds = J(x,\rho)\,,$$ thus implying (\ref{arrivederci}).

\section{Proof of Theorem \ref{LLNws}}\label{paccolone}

We first prove that  the family $R^\l _{x_0, \s_0}$ is relatively
compact  and then characterize its limit points.

\smallskip

\noindent $\bullet$ {\sl  Relative compactness}. We use Prohorov
theorem and Aldous compactness criterion (see for example
\cite{KL}[Section 4.1]). Since $\G^{(\ell)}$ is compact (endowed with
the discrete topology), we only have to check that
\begin{enumerate}

\item

For each $t \in [0,T]$ and $\e>0$, there exists a compact $\cK
\subset \bbR^d$ such that
\begin{equation}\label{liberi}
 R^\l
_{x_0, \s_0} ( x(t) \in \cK) = P^\l _{x_0,\s_0} (x(t) \in \cK ) \leq
\e \,, \qquad \forall \l> 0\,,
\end{equation}

\item

and that
\begin{equation}\label{tutti}
\begin{split}
\lim _{\theta \downarrow 0}\, \limsup_{\l \uparrow \infty}\, \sup
_\t
  R^\l _{x_0,\s_0} \Bigl( |x(\t)-x\bigl((\t+\theta)\wedge & T
\bigr)|>\epsilon\\
&\text{ or } \a(\t)\not = \a \bigl( (\t+\theta)\wedge T\bigr) \Bigr)
=0\,,
\end{split}
\end{equation}
where $\tau$ varies among all stopping times bounded by $T$.
\end{enumerate}

   As observed in  Lemma \ref{phelps}, there exists a
compact $\cK \subset \bbR^d$ such that $x(t) \in \cK$ for all $t \in
[0,T]$ $P^\l_{x_0,\s_0}$--a.s. and   for all $\l>0$. Since $|x(t)-x(s)|=\bigl| \int_s ^t
F_{\s(u) } (x(u),u)du\bigr|$, we conclude that there exists a
positive constant $c>0$ such that $|x(t)-x(s)|\leq c (t-s)$ for all
$s<t$ in $[0,T]$, $P^\l_{x_0, \s_0}$--a.s and for all $\l>0$. Hence,
we only need to prove that
\begin{equation}\label{risata}
\lim _{\theta \downarrow 0}\, \limsup_{\l \uparrow \infty}\, \sup
_\t
   R^\l _{x_0,\s_0} \Bigl(  \a(\t)\not = \a \bigl( (\t+\theta)\wedge T\bigr) \Bigr)
=0\,.
\end{equation}
Below we restrict to $\theta \in [0,1]$, moreover we use the shorter
notation  $\t_\theta := (\t+\theta)\wedge T$.
 We can write
%\begin{multline}\label{foto1}
% R^\l _{x_0,\s_0} \left( \a(\t)\not = \a \bigl( (\t+\theta)\wedge
% T\bigr) \right)=
% \\
%P^\l _{x_0,\s_0} \left( \a(\t)\not = \a \bigl(
%(\t+\theta)\wedge T\bigr) \right)=
% \sum _{i=1}^\ell \sum _{\s \in \G_i}\int_\O dx P^\l _{x_0, \s_0}
%\left(\sigma \bigl (\t+\theta)\wedge T\bigr)\not\in \G_i | x(\t)=x
%,\s(\t)=\s\right) P^\l _{x_0, \s_0} ( x(\t) =dx, \s(\t)=\s)\,.
%\end{multline}
\begin{equation}\label{foto1}
R^\l _{x_0,\s_0} \left( \a(\t)\not = \a \bigl( (\t+\theta)\wedge
T\bigr) \right)=
% P^\l _{x_0,\s_0} \left( \a(\t)\not = \a \bigl(
%(\t+\theta)\wedge T\bigr) \right)
E^\l_{x_0,\s_0} \left[
  P^\l _{x_0, \s_0} \Bigl( \alpha  (\t)  \not = \alpha(\t_\theta)  \,|\, x(\t) ,\s(\t), \t\Bigr) \right]
% P^\l _{x_0, \s_0} ( x(\t) =dx, \s(\t)=\s)
 \,.
\end{equation}
Due to the strong Markov property of   PDMPs,   we can bound
\begin{multline}\label{foto2}
P^\l _{x_0, \s_0} \left( \a(\t) \not = \a (\t_\theta )
\, |\, x(\t)=x ,\s(\t)=\s, \t=t \right)\leq\\
 P^\l_{x_0,\s_0} \left(
\exists s \in (\t, \t_\theta ] \text{ s.t. } \a (s) \not =
 \a(\t) \,|\,x(\t)=x, \s(\t) = \s, \t =t\right)\leq \\
 P^\l_{x,\s, t} \left(\exists s
\in (t,(t+ \theta)\wedge T] \text{ s.t. } \s(s)\not \in
\G_{\a(\s)}\right)\,,
\end{multline}
where $P^\l_{x,\s, t}$ denotes the law of the $\l$--rescaled PDMP
starting in $(x,\s)$ at time $t$.

Due to the discussion at the beginning, we know that $x(\t)$ belongs
to  $ \cK$, thus implying that in the above expression we can
restrict to  points $x $ belonging to $\cK$. Similarly, starting in
$x \in \cK$ the $\l$--rescaled process  with law $P^\l_{x,\s,t}$
cannot leave a fixed compact $\cK'$ (independent of $\l,x, \s,t$)
in time $\theta \leq 1$. Hence, defining
$$c:= \sup _{x'\in \cK'}\sup_{u \in [0,T]} \max _{\s_1 \in
\G}\sum _{\s_2\in \G: \s_2\not=\s_1} r(\s_1,\s_2|x', u) \,,
$$
  the number $N$ of chemical jumps for the process $P^\l_{x,\s, t}$ in the interval $(t,(t+\theta)\wedge T ]$ is
stochastically dominated by a Poisson variable $\widehat N$ of mean
$c\l\, \theta $, uniformly in  $(x,\s,t)\in \cK\times \G\times [0,T]
$ and $\l\geq 1$. By similar arguments, whenever  the process with
law $P^\l _{x,\s,t}$ makes a chemical jump  in the time interval
$(t,(t+\theta)\wedge T ]$,   the probability that the jump is
between different chemical
 metastates is bounded from above by $C /\l$, $C$ not depending on
$(x,\s,t)\in \cK \times \G\times [0,T]$, $\l\geq 1$.
 Therefore, conditioned to make $n$ chemical  jumps in the time
interval $(t, (t+\theta)\wedge T]$, the probability that at least
one jump is between different chemical metastates is bounded from
above by $Cn/\l$. Hence, we can estimate
\begin{multline}
P^\l _{x,\s,t}\left(\exists s \in [t,(t+ \theta)\wedge T] \text{
s.t. } \s (s) \not \in \G_{\a(\s)}\right)\leq\\ \sum _{n=1}^\infty P
^\l_{x,\s,t}( N = n ) C n /\l \leq \sum _{n=1}^\infty P (\widehat N
= n ) C n /\l = (C/\l) E(\widehat N) = c \, C\, \theta\,.
\end{multline}
This allows to bound the r.h.s. of (\ref{foto2}) by $cC\theta$
uniformly in $\lambda\geq 1$, thus implying the same bound for the
first expression in (\ref{foto1}). This concludes the proof of
(\ref{risata}) and therefore the proof of the relative compactness
of $\{R^\l_{x_0,\s_0}\}_{\l>0}$.

\bigskip

\noindent $\bullet$ {\sl Characterization of the limit points}.
Given a path $\s(t)$, define the times $T_1, T_2, \dots $ as the
consecutive times in $[0,T]$ at which the system jumps between
different metastates, i.e.
\begin{align*}
 T_1 & = \inf  \left\{ t \in [0,T] \,:\, \a (\s(t) ) \not = \a (\s(0)
)\right \}\,, \\
T_k & = \inf \left\{t \in (T_{k-1},T] \,:\, \a( \s(t)) \not= \a (\s
(T_{k-1}) )\right\}\,, \qquad k \geq 2\,,
\end{align*}
with the convention that $T_k=\infty$ if $k$ is larger than the
number of jumps in the time interval $[0,T]$ between different
metastates.
% Trivially, $T_k$ is a stopping  time.
 Fix  $(x_0, \s_0)\in \bbR^d
\times \G$, a sequence $0<t_1<t_2<\cdots < t_n<T$   and fix $\s_1,
\s_2, \dots , \s_n$ such that $ \a( \s_i ) \not = \a( \s_{i+1})$ for
each $i=0,1, \dots ,n -1$. In addition, fix $\d>0$ and $\e>0$ small
enough that $\e<T-t_n$ and $ \e<t_{i+1}-t_i$ for each $i=0,1,\dots,
n-1$, where $t_0:=0$. Then define the event
\begin{equation}\label{seriosotto} \cA=\left\{T_k \in (t_k-\e,
t_k+\e) \text{ and } \s( T_k)= \s_k \,, \forall k= 1,2, \dots, n
\right\}\cap \{ \sup_{t \leq T_n} |x(t) -x_*(t) | <\d\} \,,
\end{equation}
where the path $\bigl(x_*(t):  t \in [0,T_n]\bigr)$ is the only
continuous path in $\bbR^d$  such that
\begin{equation}\label{ribelle}
\dot{x}_* (t) = F_{ \a ( \s_k ) }  ( x_*(t), t ) \qquad \forall t
\in [T_k, T_{k+1}] ,\;\; k=0,1,\dots, n-1 \,. \end{equation} In the
above formula $T_0:=0$ and the vector field $F_i$ for $i \in
\G^{(\ell)}$ is the one defined in (\ref{babbol}).
 We claim that
\begin{multline}\label{serioso}
 \lim  _{\l \uparrow \infty} P^\l _{x_0, \s_0}(\cA)= \int_U du_1 du_2 \dots d u_n
 \prod _{k=0}^{n-1}\Bigl(e^{- \int _{u_k}
^{u_{k+1}} \g_{\a (\s_k) }(x_*(s),s)ds }\,\times\\
 \sum_{\widehat \s _k\in
\G_{\a(\s _k)}}    \mu _{\a(\s_k) } (\widehat \s _k |
x_*(u_{k+1}),u_{k+1}  ) r(\widehat \s_k ,\s_{k+1}
|x_*(u_{k+1}),u_{k+1}) \Bigr)\,,
\end{multline}
where $u_0:=0$, $ U := \prod _{i=1} ^n (t_i-\e, t_i +\e)$ and
\begin{equation}\label{solstizio} \g_i(x,s) := \sum _{j  \in
\G^{(\ell)} :j\not = i} r(i,j|x,s), \qquad i \in \G^{(\ell)}\,.
\end{equation}
We first prove the above claim for $n=1$.  For simplicity of
notation we write $t$ in place of $t_1$ and $\s'$ in places of
$\s_1$, while we call  $\s_0 , \s_1, \s_2, \dots , \s_r$  the finite
sequence of states in $\G_{\a(\s_0)}$ visited by the process before
jumping to $\s'$. Then, defining now
\begin{equation}\label{seriosobis}
\cA= \left\{T_1 \in (t-\e,t+\e) \,, \s (T_1)= \s'\,, \sup _{ s\leq
T_1 } |x(s)-x_*(s)|<\d\right\}\,,
\end{equation}
   we can write
\begin{multline}\label{socrate}
%\begin{split}
 P^\l_{x_0,\s_0} ( \cA)=
 \sum_{r=0}^\infty \sum _{\s_1, \s_2, \dots, \s_r}
\int_{t-\e}^{t+\e} du \int _0 ^u d \t_1 \int_{\t_1} ^u d \t _2
\cdots \int_{\t_{r-1} }^u
d\t_r\\
  \exp \Bigl\{ - \sum _{k=0}^{r}
\int_{\t_k} ^{\t_{k+1}}\bigl[\l \widehat \gamma _{\s_k} ( \widetilde
x(s) ,s) + b_{\s_k} ( \widetilde{x} (s),s) \bigr] ds
\Bigr\}\\\left[\prod _{k=0}^{r-1} \l r(\s_k,
\s_{k+1}|\widetilde{x}(\t_{k+1}), \t_{k+1})\right] r(\s _r , \s'
|\widetilde{x} (u), u)\chi\left( \sup _{ s\leq u }  |\widetilde
x(s)-x_*(s)|<\d \right)\,,
% \end{split}
\end{multline}
where in the above expression $\t_0:=0$, $\t_{r+1}:=u$,
$\{\widetilde x(s): s \in [0,u]\} $ is the only continuous path on
$[0,u]$ starting in $x_0$  such that
$$
\dot{\widetilde x} (s) = F_{\s_k} (\widetilde x(s), s) \,,\qquad
\forall s \in ( \t_k, \t_{k+1} ) \,, \; \forall k = 0,1, \dots, r
$$
 and, for $\s \in \G_{\a (\s_0) } $,
\begin{align*}
&  \widehat \g _\sigma (x,s) = \sum_{\widehat \s \in \G _{\a (\s_0)}
} r(\s,
\widehat \s | x,s) \,,\\
&  b  _\sigma (x,s) = \sum_{\widehat \s \in \G \setminus \G _{\a
(\s_0)} } r(\s, \widehat \s | x,s)\,.
\end{align*}
Let us call $\widehat P ^\l _{x_0, \s_0}$ the $\l$--rescaled PDMP
with chemical states in $\G_{\a(\s_0)}$, transition rates $\l r (\s
, \s' |x,s) $ and vector fields $F_\s (x,s)$, $\s,\s' \in \G_{
\a(\s_0)}$. We write $\widehat E^\l _{x_0, \s_0}$ for the associated
expectation. Then it is simple to check that the r.h.s. of
(\ref{socrate}) equals
\begin{equation}\label{acquachiara}
\widehat E^\l _{x_0, \s_0}
 \left[
\int _{t-\e} ^{t+\e} du\, \exp \left\{ -\int _0 ^u  b _{\s(s)}
(x(s),s)ds \right\} r(\s(u), \s'|x(u),u)\chi\bigl( \sup _{ s\leq u }
|x(s)-x_*(s)|<\d \bigr)\right]\,.
\end{equation}
(Above we have used that $\s(u)= \s(u-)$ $\widehat P^\l_{x_0,
\s_0}$--a.s.) One can compute the limit of (\ref{acquachiara}) as
$\l \uparrow \infty$ by means of the LLN given in Theorem \ref{LLN},
applied to $\widehat P^\l_{x_0, \s_0}$.
 Indeed, we know that for each
$\b>0$ $\widehat P ^\l _{x_0, \s_0}\bigl( \sup _{ s\leq t+\e }
|x(s)-x_*(s)|<\b \bigr) \rightarrow 1$. This allows to write
$$ P^\l_{x_0, \s_0}(\cA)=
\widehat E^\l _{x_0, \s_0}
 \left[
\int _{t-\e} ^{t+\e} du\, \exp \left\{ -\int _0 ^u b _{\s(s)}
(x_*(s),s)ds \right\} r(\s(u), \s'|x_*(u),u)\right]+ o(1)\,,
$$
where here and below we denote  $o(1)$ any quantity such that $$
\lim _{\b\downarrow 0 } \limsup _{\l \uparrow \infty} o(1) =0\,.$$
%goes to zero as $\d \downarrow 0$ uniformly in $\l$ and $\e$.
% $ \lim _{\d \downarrow 0} \sup _{\l >0} |\cE^\l(\d)|=0$.
It is simple to derive from Theorem \ref{LLN} that
$$
 \lim _{\l
\uparrow \infty}\widehat P^\l _{x_0, \s_0} \left(\sup _{t-\e\leq u
\leq t+\e} \left|\int _0 ^u b _{\s(s)} (x_*(s),s)ds- \int_0 ^u \g_{\a(
\s_0)} (x_*(s),s) ds
%\sum _{\widehat \s \in \G _{\a(\s_0) } } \int _0 ^u \mu_{\a(\s_0) }
%(\widehat \s |x_*(s),s)) | b_{\widehat \s } (x_*(s),s)
 \right|>\b
\right)=0\,,
$$
since (recall (\ref{solstizio}))
\begin{eqnarray*}
 \gamma_{\a(\s_0) } (x_*(s),s)& = &
\sum _{\widehat \s \in \G_{\a(\s_0) } }
 \sum_{\widetilde \s \in    \G \setminus \G_{\a(\s_0)} }
\mu_{\a(\s_0) } (\widehat \s |x_*(s),s) r(\widehat \s , \widetilde
\s |
x_*(s),s) \\
&=& \sum _{\widehat \s \in \G_{\a(\s_0) } } \mu_{\a(\s_0) }
(\widehat \s |x_*(s),s) b_{\widehat \s}(x_*(s),s)\,.
% \sum _{j\in \{1, \dots \ell\} \setminus \{\a(\s_0) \} }
% r(\a(\s_0), j |x_*(s),s) \,.
\end{eqnarray*}
Hence we can write
$$ P^\l_{x_0 , \s_0} (\cA)=
\widehat E^\l _{x_0, \s_0}
 \left[
\int _{t-\e} ^{t+\e} du\, \exp \left\{ -\int _0 ^u \gamma_{\a(\s_0)}
(x_*(s),s) ds  \right\} r(\s(u), \s'|x_*(u),u)\right]+ o(1)\,,
$$
%where $o(1)$ h maintains the same properties as before.
At this point, one can apply again Theorem \ref{LLN}  and conclude
that \begin{multline*}
 P^\l _{x_0, \s_0 }(\cA)=\\
\sum _{\widehat \s \in \G_{\a(\s_0)} } \int _{t-\e} ^{t+\e} du\,
\exp \left\{ -\int _0 ^u \gamma_{\a(\s_0)} (x_*(s), s) ds \right\} \mu
_{\a(\s_0) } (\widehat \s | x_*(u),u ) r(\widehat \s ,
\s'|x_*(u),u)+o(1)\,.
\end{multline*}
By taking the limit $\l \uparrow \infty$ and afterwards using the
arbitrariness of $\beta$, we then obtain
\begin{equation}\label{banana}
\begin{split}
 \lim _{\l \uparrow \infty} & P^\l _{x_0, \s_0 }(\cA)=\\
& \sum_{\widehat \s \in \G_{\a(\s_0)} } \int _{t-\e} ^{t+\e} du\,
\exp
 \left\{ -\int _0 ^u \gamma_{\a(\s_0) } (x_*(s),s) \right\} \mu
 _{\a(\s_0) } (\widehat \s | x_*(u),u ) r(\widehat \s , \s'|x_*(u),u)\,.
\end{split}
\end{equation}
This concludes the proof of (\ref{serioso}) when $n=1$. Let us now
show how to prove (\ref{serioso}) when the event $\cA$ is defined as
in (\ref{seriosotto}) with $n=2$. The general case is completely similar.
By the strong Markov property, we
can write \begin{equation}\label{nido}
\begin{split}
 P^\l _{x_0, \s_0 }(\cA)=   E^\l_{x_0, \s_0}
\Bigl[ & \chi \Bigl\{ T_1 \in (t_1-\e, t_1+\e)\,, \,   \s(T_1)=\s_1
,\, \\
&\sup_{t\leq T_1}|x(t)-x_*(t)|\leq \d \Bigr\} f^\l\bigl(x(T_1), \s_1,T_1\bigr)\Bigr]\,,
\end{split}
\end{equation}
where, for $s\leq t_2-\e$,
$$ f^\l(x',\s',s):=  P^\l_{x',
\s',s } \Bigl(T_1 \in (t_2-\e, t_2+\e)\,, \; \s (T_1)=\s_2
\,,\;\sup_{s\leq t\leq T_1}|x(t)-z_*(t)|\leq \d\Bigr) \,,
$$
 $z_*(t)$ being the path starting in $x'$ at time $s$,
 such that $\dot{z}_* (t)=F_{\a (\s')} \bigl(z_* (t), t\bigr)$.
Due to  (\ref{banana}) with modified starting state and starting time, we know that
$f^\l(x',\s',s)$ converges to $g (x',\s', s)$ defined as
\begin{multline}
g(x',\s',s):=\\
 \sum_{\widehat \s \in \G_{    \a(\s') } }
 \int _{t_2-\e} ^{t_2+\e} du\, \exp
 \left\{ -\int _s ^u \gamma_{\a(\s') } (x_*(s),s) \right\} \mu
_{\a(\s') } (\widehat \s | x_*(u),u ) r(\widehat \s ,
\s_2|x_*(u),u)\,.
\end{multline}
 By simple arguments (as the ones used in the proof of Lemma \ref{funghetto}) one can improve \eqref{banana} and
 conclude that, given a compact $\cK$,
 $$ \lim _{\l\uparrow \infty} \sup _{x' \in \cK, \s'\in \G, s \in [0,t_2-\e] } \Bigl|
 f^\l(x',\s', s)- g ( x', \s', s) \Bigr|=0\,.
 $$
This allows to replace in  (\ref{nido}),  $f^\l\bigl(x(T_1), \s_1,T_1\bigr)$ with
 $g\bigl(x(T_1), \s_1,T_1\bigr)$ plus a negligible error as $\l \uparrow \infty$.
  The conclusion of the proof of  (\ref{serioso}) for $n=2$
   follows now from the LLN of Theorem \ref{LLN} by the same arguments used in the proof of (\ref{banana}).

\bigskip
Having proved (\ref{serioso}), we derive from it the following fact.
Fix  $(x_0, \s_0)\in \bbR^d \times \G$, a sequence $0<t_1<t_2<\cdots
< t_n<T$   and fix $i_1, i_2, \dots , i_n\in \G^{(\ell)}$ such that
$ i_k \not = i_{k+1}$ for each $k=0,1, \dots ,n-1 $ ($i_0:= \a
(\s_0) $). In addition, fix $\d>0$ and $\e>0$ small enough that $\e
< T-t_n$ and $ \e<t_{i+1}-t_i$ for each $i=0,1,\dots, n-1$ where
$t_0:=0$. Then define the event $\cC=\cC(\e,\d)$ as
\begin{equation} \cC=\left\{T_k \in (t_k-\e,
t_k+\e) \text{ and } \a( \s(T_k) )= i_k \,, \forall k= 1,2, \dots, n
\right\}\cap \{ \sup_{t \leq T_n} |x(t) -x_*(t) | <\d\} \,,
\end{equation}
where the path $\bigl(x_*(t): t \in [0,T_n ]\bigr)$ is the only
continuous path in $\bbR^d$ such that \begin{equation}\dot{x}_* (t)
= F_{ i_k} ( x_*(t), t ) \qquad \forall t \in [T_k, T_{k+1}] ,\;\;
k=0,1,\dots, n-1 \,.\end{equation}
%In the above formula $T_0=0$,  the vector field $F_i$ for $i \in \G$
%is the one defined in (\ref{babbol}) and $U =\prod_{k=1}^n (t_k -\e,t_k+\e)$.
 Then, summing in (\ref{serioso}) over $\s_1, \s_2, \dots , \s_n$,
we obtain
\begin{multline}\label{rosita}
\lim _{\l \uparrow \infty} P^\l _{x_0, \s_0} (\cC)=
  \int_U du_1 du_2 \dots d u_n \prod _{k=0}^{n-1}\left(e^{- \int _{u_k}
^{u_{k+1}} \g_{i_k}(x_*(s),s)ds } r(i_k , i_{k+1} |x_*( u_{k+1}),
u_{k+1} ) \right)\,.
\end{multline}
Since $\cC$ is an open subset of the Skohorod space $D\bigl( [0,T],
\bbR^d \times \G^{(\ell)} \bigr)$, we derive that for any limit
point $\cR$ of the family $\{\cR^\l _{x_0, \a( \s_0 )}:\l>0\} $ the
probability $\cR(\cC)$ is not larger than the r.h.s. of
\eqref{rosita}.  By similar arguments, one obtains that the limit
$P^\l_{x_0,\s_0}(\cC')$ coincides with the r.h.s. of \eqref{rosita},
where the event $\cC'=\cC'(\e,\d)$ is defined as
\begin{equation} \cC'=\left\{T_k \in [t_k-\e,
t_k+\e] \text{ and } \a( \s(T_k) )= i_k \,, \forall k= 1,2, \dots, n
\right\}\cap \{ \sup_{t \leq T_n} |x(t) -x_*(t) | \leq \d\} \,.
\end{equation}
Since $\cC'$ is closed, we derive that $\cR(\cC')$ is not smaller
than the r.h.s. of \eqref{rosita}. On the other hand, $\cC'(\e',\d')
\subset \cC(\e,\d)$ for $\e' <\e$ and $\d'<\d$, hence
$\cR(\cC'(\e',\d')  ) \leq \cR(\cC(\e,\d) )$. By taking the limits
$\e' \rightarrow \e$ and  $\d'\rightarrow \d$,  the above
observations implies that
\begin{equation}
\cR(\cC)=
  \int_U du_1 du_2 \dots d u_n \prod _{k=0}^{n-1}\left(e^{- \int _{u_k}
^{u_{k+1}} \g_{i_k}(x_*(s),s)ds } r(i_k , i_{k+1} |x_*( u_{k+1}),
u_{k+1} ) \right)\,.
\end{equation}

The above family of  identities parameterized by $t_1, \dots, t_n$,
$\e$ and $\d$ allows to conclude that there exists a unique limit
point and it must coincide with the law of the PDMP described in
Theorem \ref{LLNws}.

%%%%%%%%%%%%%%%%%%%%%%%%%%%%%%%%%%%%%%%%%%%%%%%%%%%%%%%%%%%%%%%%%%%%%%
%%%%%%%%%%%%%%%%%%%%%%%%%%%%%%%%%%%%%%%%%%%%%%%%%%%%%%%%%%%%%%%%%%%%%%
%%                          APPENDICE
%%%%%%%%%%%%%%%%%%%%%%%%%%%%%%%%%%%%%%%%%%%%%%%%%%%%%%%%%%%%%%%%%%%%%%
%%%%%%%%%%%%%%%%%%%%%%%%%%%%%%%%%%%%%%%%%%%%%%%%%%%%%%%%%%%%%%%%%%%%%%
\appendix

\section{Some topological  properties of the space $\Upsilon$}

For the reader's convenience, in this Appendix we collect some
properties of the metric space $\Upsilon$  that are used in the
text.  We stress that the definition of $\Upsilon$ given in
(\ref{Uil}) depends on the fixed initial mechanical state $x_0$.  We
call $\cM_*[0,T]\subset \cM[0,T]$ the image of the map $ L[0,T]\ni f
\rightarrow f(t) dt \in \cM  [0,T]$ and we define
$\cM_*[0,T]^{\G,1}$ as the set of positive measures
$\left(\rho_\s(t)dt\right)_{\s\in \G}$ such that $\sum_{\s\in
\G}\rho_\s(t)=1 \ a.e.$

\begin{Le}\label{announodue}
 Given $\rho \in \cM_*[0,T]^{\G,1}$, there exists a unique
$x(t)\in C[0,T]$ such that
\begin{equation}
x(t)=x_0+\sum_{\s\in \G}\int_0^tF_\s(x(s),s)\rho_\s(s)ds \qquad \forall
t\in[0,T]\,.\label{micien3}
\end{equation}
Moreover,
the map that associates to each  $\rho \in \mathcal
\cM_*[0,T]^{\G,1}$
 the unique element $x(t)\in C([0,T])$ satisfying
\eqref{micien3} is continuous.
\end{Le}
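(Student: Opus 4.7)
The plan is to treat \eqref{micien3} as a Carath\'eodory ODE for the effective vector field $\widetilde F(x,t,\rho):=\sum_{\s\in\G}F_\s(x,t)\rho_\s(t)$, then derive continuity from an a priori compactness estimate, a Gr\"onwall bound and an integration--by--parts trick that exploits the metric \eqref{sara} on $\Upsilon$.

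First I would address existence and uniqueness. Since $\rho_\s(t)\in[0,1]$ and $\sum_\s\rho_\s(t)=1$ a.e., assumption (A4) implies that $\widetilde F(\cdot,\cdot,\rho)$ is continuous in $x$, measurable in $t$, satisfies $|\widetilde F(x,t,\rho)|\leq\k_1+\k_2|x|$ by \eqref{muccamuu}, and is Lipschitz in $x$ on every compact subset of $\bbR^d$ uniformly in $t$ and in $\rho$, with the same constant $K$ as in \eqref{lipF}. Standard Carath\'eodory theory (plus Gr\"onwall applied to the linear--growth bound to prevent blow--up on $[0,T]$) therefore yields a unique absolutely continuous $x\in C[0,T]$ satisfying \eqref{micien3}. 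The same linear--growth Gr\"onwall argument gives a uniform bound: there exists a compact $\cK^*\subset\bbR^d$, depending only on $x_0,\k_1,\k_2,T$, such that every solution associated to every $\rho\in\cM_*[0,T]^{\G,1}$ takes values in $\cK^*$. Denote by $K$ the Lipschitz constant of the $F_\s$ on $\cK^*$.

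Next I would tackle continuity. Let $\rho^{(n)}\to\rho$ in $\cM_*[0,T]^{\G,1}$, with associated solutions $x^{(n)}$ and $x$, all valued in $\cK^*$. By Lemma~\ref{annozero} (used via the metric $d$), this convergence is equivalent to saying that the primitives
\[
G^{(n)}_\s(t):=\int_0^t\bigl[\rho^{(n)}_\s(s)-\rho_\s(s)\bigr]ds
\]
converge to zero uniformly in $t\in[0,T]$, for every $\s\in\G$. Writing the difference of integral equations and splitting,
\[
x^{(n)}(t)-x(t)=\sum_{\s\in\G}\int_0^t\bigl[F_\s(x^{(n)}(s),s)-F_\s(x(s),s)\bigr]\rho^{(n)}_\s(s)ds+\sum_{\s\in\G}R^{(n)}_\s(t),
\]
with $R^{(n)}_\s(t):=\int_0^tF_\s(x(s),s)\bigl[\rho^{(n)}_\s(s)-\rho_\s(s)\bigr]ds$. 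The first sum is bounded in modulus by $K\int_0^t|x^{(n)}(s)-x(s)|ds$, using $\sum_\s\rho^{(n)}_\s=1$ a.e. and the Lipschitz bound on $\cK^*$. For the remainder I would show that $\sup_{0\leq t\leq T}|R^{(n)}_\s(t)|\to0$: since $s\mapsto F_\s(x(s),s)$ is continuous on $[0,T]$, approximate it uniformly on $[0,T]$ by $C^1$ functions $\phi_m$, write
\[
R^{(n)}_\s(t)=\int_0^t\bigl[F_\s(x(s),s)-\phi_m(s)\bigr]\bigl[\rho^{(n)}_\s(s)-\rho_\s(s)\bigr]ds+\int_0^t\phi_m(s)\bigl[\rho^{(n)}_\s(s)-\rho_\s(s)\bigr]ds,
\]
bound the first integral by $2T\|F_\s(x(\cdot),\cdot)-\phi_m\|_\infty$ (using $|\rho^{(n)}_\s|,|\rho_\s|\leq 1$), and handle the second by integration by parts against $G^{(n)}_\s$:
\[
\int_0^t\phi_m(s)dG^{(n)}_\s(s)=\phi_m(t)G^{(n)}_\s(t)-\int_0^t\phi_m'(s)G^{(n)}_\s(s)ds,
\]
whose modulus is at most $(1+T)\|\phi_m'\|_\infty\|G^{(n)}_\s\|_\infty$ (with $\phi_m$ eventually fixed). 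Sending $n\to\infty$ first and then $m\to\infty$ yields $\sup_t|R^{(n)}_\s(t)|\to 0$. Gr\"onwall's lemma then gives $\|x^{(n)}-x\|_\infty\leq e^{K|\G|T}\sum_\s\sup_t|R^{(n)}_\s(t)|\to 0$, which is the required continuity.

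The expected main obstacle is the estimate on $R^{(n)}_\s$: convergence in $\Upsilon$ only gives uniform smallness of the primitives $G^{(n)}_\s$, not pointwise control of $\rho^{(n)}_\s-\rho_\s$, so the integration--by--parts/density step is what translates this weak information into a uniform--in--$t$ estimate strong enough to close the Gr\"onwall loop. Everything else is routine Carath\'eodory ODE theory and assumption (A4).
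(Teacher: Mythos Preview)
Your proof is correct and shares the paper's overall architecture: an a priori compactness bound from linear growth plus Gr\"onwall, then the same splitting of $x^{(n)}-x$ into a Lipschitz term and the remainder $R^{(n)}_\s$, closed by Gr\"onwall once $\sup_t|R^{(n)}_\s(t)|\to 0$. The two proofs differ only in execution. For existence, you invoke Carath\'eodory theory directly, whereas the paper approximates $\rho$ by continuous densities (for which classical ODE theory applies) and shows the corresponding solutions form a Cauchy sequence in $C[0,T]$; your route is shorter. For the remainder, the paper argues that weak convergence of $\rho^{(n)}_\s(s)\,ds$ against the continuous test function $s\mapsto F_\s(x(s),s)$ already yields uniform-in-$t$ convergence of the primitives, while your $C^1$ approximation plus integration by parts makes this step fully explicit. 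Two minor slips that do not affect the argument: the integration-by-parts bound should involve $\|\phi_m\|_\infty$ as well as $\|\phi_m'\|_\infty$, and the Gr\"onwall constant is $e^{KT}$ rather than $e^{K|\G|T}$ since $\sum_\s\rho^{(n)}_\s=1$.
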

\begin{proof}

Due to \eqref{muccamuu}, if $x(t)$ solves \eqref{micien3} it must be
$|x(t)| \leq |x_0|+ c_1 t+ c_2 \int _0^t |x(s)|ds$. Then, applying Gronwall inequality, the path $x(t)$ must  lie
 inside a compact $\cK$, depending only on $x_0$. We fix the constant $K$ as in
\eqref{lipF}.

Let us define $\cQ$ as the subset
$$\cQ:=\{ \rho  \in\cM_*[0,T]^{\G,1}\,:\, \rho_\s \in C[0,T]\; \forall \s \in \G \}\,.$$
It is simple to check that $\cQ$ is dense in $\cM_*[0,T]^{\G,1}$. Indeed,
by using mollifiers, one can show that for each $\s\in \G$ there exists a
sequence $\rho^{(n)}_\s(s) \in C[0,T]$ such that $\rho^{(n)}_\s
(s)$ converges to $\rho_\s(s)$ in $L^1[0,T]$ as $n\uparrow \infty$.
At  cost to
  normalize, we can assume that $\sum _\s
\rho^{(n)} _\s (s)=1$ for each $s \in [0,T]$.
% and then,  at cost to
%take a subsequence, we can assume that the convergence is also
%pointwise.
% Since $L^1$--convergence is stronger than
%$L[0,T]$--convergence (where $L[0,T]$ is endowed with the metric
%defined by the r.h.s. of \eqref{georgia}), we obtain that
%$\rho^{(n)}_\s$ converges to $\rho_\s$ in $L[0,T]$ as $n\uparrow
%\infty$.

 If $\rho\in \cQ$,
existence and uniqueness of \eqref{micien3} can be proven by the same arguments used in
the proof of Lemma \ref{sibillino}, given in Appendix \ref{misciotto}.  In order to prove existence for \eqref{micien3} when $\rho \in\cM_*[0,T]^{\G,1}$, we take a sequence $\rho^{(n)}\in \cQ$ such that
$\rho^{(n)}_\s$ converges to $\rho_\s$ in $L^1[0,T]$ as $n\uparrow
\infty$.
  Consider the solutions
$x^{(n)}(t) \in C[0,T]$ associated to $\rho^{(n)} $.
We can bound
$$  \sum_{\s \in \G }
\left|\int_0 ^t\bigl[ \rho^{(n)} _\s (s)- \rho_\s ^{(m)} (s) \bigr] F_\s
(x^{(n)}(s),s) ds \right|\leq K_1 \sum_{\s \in \G }
\int_0 ^T\bigl| \rho^{(n)} _\s (s)- \rho_\s ^{(m)} (s) \bigr| ds =:C_{n,m} \,,$$
where $K_1:= \max _\s \max _{x \in \cK, t \in [0,T]} |F_\s (x,t)|$.
Due to \eqref{lipF}  we can estimate
\begin{equation}
\bigl| x^{(n)}(t) - x^{(m)}(t)\bigr|\leq C_{n,m}
+ K  \int _0 ^t |x^{(n)}(s) -x^{(m)}(s)|ds \,.
\end{equation}
Due to Gronwall lemma, we conclude that $ \|x^{(n)}(t) - x^{(m)}(t)\|_\infty\leq C_{n,m}e^{KT}$.
Since $C_{n,m}$ is arbitrarily small for $n,m$ large, we conclude that the sequence $x^{(n)}$ is a Cauchy sequence in
$C[0,T]$, and therefore it converges to some path $x\in C[0,T]$. Taking the limit $n\rightarrow \infty$ for equation
\eqref{micien3} with $x,\rho$ replaced respectively by  $x^{(n)}$, $\rho^{(n)}$, due to the Dominated Converge Theorem one concludes that $x(t)$ solves \eqref{micien3}.
Uniqueness follows from Gronwall inequality, since given two solutions  $x_1(t)$ and $x_2(t)$ of \eqref{micien3} it must be
$|x_1(t)-x_2(t)|\leq K \int_0 ^t |x_1(s)-x_2(s)|ds $.

Finally let us prove the continuity of the map $\cM_*[0,T]^{\G,1}\ni \rho(t) \rightarrow x(t) \in C[0,T]$.
We introduce a metric $D$ on $\cM_*[0,T]$ defined as
\begin{equation}\label{georgia}
D(f_1(t)dt ,f_2(t) dt ) = \sup _{t\in [0,T]}\left| \int _0 ^t \bigl[
f_1(s) -f_2(s) \bigr]ds \right| \,, \qquad f_1,f_2 \in L[0,T]\,.
\end{equation}
It is simple to check that $D$ is a distance on $\cM_*[0,T]$. We
claim that the topology induced by $D$ coincides with the weak
topology of $\cM_*[0,T]$. To this aim, we only need to show that $
D(f_n(t) dt, f(t)dt ) \rightarrow 0$ if and only if $ \int _0 ^T f_n
(t) g(t) dt \rightarrow \int_0 ^T f(t) g(t) dt $ for each $g \in
C[0,T]$. Given $h \in L[0,T]$ let $\widehat h (t) = \int_0 ^t h(s)
ds $ for $0\leq t \leq T$. Since $\widehat h$ is a function of
bounded variation, it is simple to check that $D (f_n(t) dt , f(t)dt
) \rightarrow 0$ if and only if $ \widehat f_n (t) \rightarrow
\widehat f (t)$ for each $t \in [0,T]$. The proof that the weak
convergence $f_n(t)dt \rightarrow f(t)dt $ coincides with the
pointwise convergence of $\widehat f _n $ to $\widehat f$ follows
the same arguments leading to the equivalence between the weak
convergence of probability measures and the pointwise convergence of
the associated distribution functions.

Take  a sequence
$\rho^{(n)}\in \mathcal M_*[0,T]^{\G,1}$ converging to some $\rho\in
\mathcal M_*[0,T]^{\G,1}$.
 Consider also the corresponding
$x^{(n)}(t),\ x(t) \in C[0,T]$ obtained from \eqref{micien3}.
Setting
$$C_n(t):= \sum_{\s \in \G }
\left|\int_0 ^t\bigl[ \rho^{(n)} _\s (s)- \rho_\s (s) \bigr] F_\s
(x(s),s) ds \right|\,,$$  due to \eqref{lipF} in assumption (A4) we
have
\begin{multline}
\bigl| x^{(n)}(t) - x(t)\bigr|\leq C_n(t)+ \sum_{\s \in \G}\int_0^t
\rho^{(n)}_\s(s) \bigl| F_\s (x(s),s)
-F_\s( x^{(n)}(s),s ) \bigr|ds\leq\\
C_n(t)+ K  \int _0 ^t |x^{(n)}(s) -x(s)|ds \,.
\end{multline}
Since $\rho ^{(n) }_\s (s) ds $ weakly converges  to $\rho _\s (s)
ds $ on $[0,T]$ and therefore on $[0,t]$ and  since $F_\s( x(s),s) $
is continuous in $s$,  from the previous results on the metric $D$
we conclude that $C_n(t) \rightarrow 0$ as
$n\rightarrow \infty$ uniformly in  $t \in [0,T]$.
% Since $|C_n(t)
%-C_n(s)|\leq c |t-s|$ for all $s,t \in [0,T]$, $c$ being a positive
%constant independent of $n$, we conclude that $\|C_n \|_\infty
%\rightarrow 0$ as $n\rightarrow \infty$.
 Due to the above
observations we conclude that  for each $\d>0$ there exists  $n_0$
such that for each $t \in [0,T]$ and $n \geq n_0$ it holds
$$
\bigl| x^{(n)}(t) - x(t)\bigr|\leq \d + K \int _0 ^t |x^{(n)}(s)
-x(s)|ds\,.
$$
By Gronwall lemma it follows that $ \| x^{(n)} - x \|_\infty \leq\d
e ^{K T} $ for all $ n \geq n_0$. This implies that $x^{(n)}(t) $
converges to $x(t)$ in the uniform norm of $C[0,T]$.

\end{proof}

\begin{Le}\label{annozero}
The space $\Upsilon$ is a compact Polish metric space.
\end{Le}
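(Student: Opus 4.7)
The plan is to identify $\Upsilon$ with $\cM_*[0,T]^{\G,1}$ via the projection $(x,\rho)\mapsto \rho$ whose inverse is furnished by Lemma \ref{announodue}, and to get compactness by a Prokhorov-type argument on the compact base $[0,T]$.

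First I will check that \eqref{sara} defines a metric on $\Upsilon$. Separation uses continuity of $x-\bar x$ for the first summand and, for the second, the classical fact that an $L^1$-function with vanishing indefinite integral is zero a.e.; symmetry and the triangle inequality are immediate. A small but crucial point is that the topology induced by $d$ on $\Upsilon$ coincides with the subspace topology inherited from $C[0,T]\times \cM[0,T]^\G$ under the identification \eqref{solarium}; this is exactly the content, already established inside the proof of Lemma \ref{announodue}, that the distance $D(f_1\,dt,f_2\,dt)=\sup_{t\in[0,T]}\bigl|\int_0^t(f_1-f_2)\,ds\bigr|$ metrizes the weak topology on $\cM_*[0,T]$.

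Next I will use Lemma \ref{announodue} to promote the projection $\pi\colon \Upsilon\to\cM_*[0,T]^{\G,1}$, $\pi(x,\rho)=\rho$, to a homeomorphism onto its image (with $\cM_*[0,T]^{\G,1}$ carrying the weak topology). Existence in the Lemma gives surjectivity, uniqueness gives injectivity, continuity of $\pi$ is immediate from the definition of $d$, and continuity of $\pi^{-1}$ is exactly the last assertion of that Lemma. It thus suffices to prove that $\cM_*[0,T]^{\G,1}$ is weakly compact. Each coordinate measure $\rho_\s(t)\,dt$ is nonnegative, has total mass at most $T$, and is carried by the compact set $[0,T]$, so any such family is automatically tight; by Prokhorov's theorem, $\cM_*[0,T]^{\G,1}$ is weakly relatively compact. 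For closedness, suppose $\rho^{(n)}_\s(t)\,dt\to \mu_\s$ weakly for each $\s\in\G$. Then $\sum_\s \mu_\s$ is the weak limit of $\sum_\s \rho^{(n)}_\s(t)\,dt$, which equals Lebesgue measure $dt$ on $[0,T]$ for every $n$; hence each nonnegative $\mu_\s$ is dominated by Lebesgue measure, so $\mu_\s=\rho_\s\,dt$ with $\rho_\s\in L[0,T]$ and $\sum_\s\rho_\s=1$ a.e. Therefore the limit lies in $\cM_*[0,T]^{\G,1}$, which is consequently weakly compact, and transporting this through the homeomorphism $\pi$ shows $\Upsilon$ is compact.

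Finally, any compact metric space is automatically Polish (complete because every Cauchy sequence has a convergent subsequence whose limit is also the Cauchy limit, and separable because it is totally bounded), so this concludes the proof. The main delicate point I anticipate is the closedness step: one must verify that weak limits of measures whose densities lie in $[0,1]$ retain a density in $[0,1]$, and this is where the constraint $\sum_\s \rho_\s=1$ a.e., forcing the sum of the limiting measures to equal Lebesgue measure, is indispensable.
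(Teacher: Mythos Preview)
Your proof is correct and follows essentially the same route as the paper: both reduce compactness of $\Upsilon$ to compactness of $\cM_*[0,T]^{\G,1}$ via the continuous graph map of Lemma \ref{announodue}, then argue relative compactness plus closedness. The one minor variation is in the closedness step, where you exploit the constraint $\sum_\s \rho_\s=1$ to dominate each limit measure by Lebesgue and obtain a density via Radon--Nikodym, whereas the paper first shows $\cM_*[0,T]$ itself is weakly closed via the $L^1$--$L^\infty$ duality and then checks separately that the sum constraint is preserved; your shortcut is slightly more direct in this constrained setting.
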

\begin{proof}
First we prove  that $\cM_*[0,T]^{\G,1}$ is compact. It is a subset
of the compact space $\{\rho \in \cM[0,T]^\G\,:\, \rho_\s[0,T]\leq
T\ \forall \s\in \G\}$, so that we just need to prove that it is
closed. We first prove that $ \cM_* [0,T]$ is closed. To this aim,
let $\mu_n $ be a sequence in $ \cM_* [0,T]$ converging to $\mu \in
\cM [0,T]$. Since given  $g\in C[0,T]$ it holds
 $ |\mu_n (g)| \leq \int_0 ^T
 |g(t)| dt$, by taking the limit we conclude that
$ |\mu (g)|  \leq \| g\|_{L^1 [0,T]}$. By density we obtain  that
the map $L^1[0,T] \ni g \rightarrow \mu (g) \in \bbR$ is a
continuous linear functional with norm bounded by $1$. Since the
dual space of $L^1[0,T]$ is given by $L^\infty [0,T]$ endowed of the
essential uniform norm, we can conclude that there exists
$h \in L^\infty [0,T]$ with $\|h\|_\infty \leq 1$ such that $\mu (g)
= \int _0 ^T g(t) h(t) dt $ for all $g \in C[0,T]$. Since
$\mu(g)=\lim_{n\uparrow \infty } \mu_n (g)  \geq 0$ for all $ g\in
C[0,T]$ with $g \geq 0$, the function $h$ must be nonnegative
 a.e. This proves that $\mu \in \cM _* [0,T]$ and $ d\mu / dt =h$.
Hence $\cM _* [0,T]$ is a closed subspace of $\cM [0,T]$. Consider
now a sequence $ \rho^{(n)} \in \cM_*[0,T]^{\G,1}$ converging to
$\rho $. Then necessarily for every $\s\in \G$ it holds $\rho_\s\in
\cM _* [0,T]$. Moreover for each $g \in C[0,T]$ it holds
$$ \int_0^T g(s) ds = \sum _{\s \in G} \int _0 ^T \rho_\s ^{(n)} (s)g(s)  ds\rightarrow
\sum_{\s \in \G} \int _0^T \rho_\s (s)g(s)  ds = \int _0 ^T \left(
\sum _{\s \in \G} \rho _\s (s) \right) g(s) ds\,.
$$
This implies that
$$\int_0^T g(s) ds=\int _0 ^T \left(
\sum _{\s \in \G} \rho _\s (s) \right) g(s) ds\,, \qquad \forall g
\in C[0,T]\,.$$ Hence, $\sum _{\s \in \G} \rho _\s (s)=1$ a.s. and
this means that $\rho\in\cM_*[0,T]^{\G,1}$.

%\smallskip
%We claim that  $\cM _* [0,T]$ is compact. Indeed,  $\cM _* [0,T]$ is
%included in  the compact subset given by the measures $\mu \in \cM
%[0,T]$ with $\mu ([0,T]) \leq T $ \cite{Bil}. Since  a closed subset
%of a compact space is compact, we get the compactness of
%$\cM_*[0,T]$.

\smallskip

Compactness of $\Upsilon\subseteq \cM[0,T]\times C[0,T]$ follows
from the fact that it is the graph of a continuous function defined
on a compact domain $\cM_*[0,T]^{\G,1}\subseteq \cM[0,T]$.
Completeness and separability of $\Upsilon$ follow  from the fact
that $\Upsilon$ can be thought as a   closed subset of the space $
C[0,T]\times \cM[0,T]^\G$, which is complete and separable.
\end{proof}

\begin{Le}\label{ucraina}
The set $\cB$  defined as
\begin{equation*}
\cB = \bigl \{ (x,\rho)  \in \Upsilon \,:\, \rho _\s \in C^1[0,T]
\text{ and }  \rho_\s(t)>0  \;\forall \s \in \G,\; t \in
[0,T]\bigr\}
 \end{equation*}
 is a dense subset of $\Upsilon$.
 \end{Le}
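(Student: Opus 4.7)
The plan is to approximate any $(x,\rho)\in\Upsilon$ by elements of $\cB$ in two successive steps: first perturb $\rho$ so that every component becomes bounded away from $0$, then smooth to produce $C^1$ densities, in both steps preserving the constraint $\sum_\s\rho_\s\equiv 1$. The corresponding $x$-components are obtained by the ODE map from $\rho$ to $x$ established in Lemma \ref{announodue}, whose continuity then delivers convergence in $\Upsilon$.

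First I would set, for each $\eps\in(0,1)$,
\[
\rho^\eps_\s(t):=(1-\eps)\rho_\s(t)+\frac{\eps}{|\G|}, \qquad \s\in\G,\ t\in[0,T].
\]
Then $\rho^\eps_\s\geq \eps/|\G|$, $\sum_\s\rho^\eps_\s\equiv 1$ a.e., and $\rho^\eps_\s\to\rho_\s$ in $L^1[0,T]$ as $\eps\downarrow 0$. Denote by $x^\eps\in C[0,T]$ the unique path associated to $\rho^\eps$ by Lemma \ref{announodue}; the same lemma gives $x^\eps\to x$ uniformly. It thus remains to approximate each $(x^\eps,\rho^\eps)$ (with uniformly positive components) by elements of $\cB$.

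For fixed $\eps>0$, extend each $\rho^\eps_\s$ to the whole real line by setting $\tilde\rho^\eps_\s(t):=1/|\G|$ for $t\notin[0,T]$, so that the identity $\sum_\s\tilde\rho^\eps_\s\equiv 1$ is preserved on all of $\bbR$ and the lower bound $\tilde\rho^\eps_\s\geq \eps/|\G|$ still holds. Choose a standard nonnegative mollifier $\phi_\delta\in C^\infty_c(\bbR)$ with $\int\phi_\delta=1$ and support in $[-\delta,\delta]$, and define $\rho^{\eps,\delta}_\s:=\tilde\rho^\eps_\s\ast\phi_\delta$, restricted to $[0,T]$. Then each $\rho^{\eps,\delta}_\s$ is $C^\infty$ on $[0,T]$, stays $\geq\eps/|\G|>0$, and still satisfies $\sum_\s\rho^{\eps,\delta}_\s\equiv 1$ since convolution commutes with finite sums. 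Moreover $\rho^{\eps,\delta}_\s\to\rho^\eps_\s$ in $L^1[0,T]$ as $\delta\downarrow 0$, which in particular implies convergence in the metric on $\cM_*[0,T]^{\G,1}$ inherited from \eqref{sara}. Letting $x^{\eps,\delta}$ denote the path attached to $\rho^{\eps,\delta}$ via Lemma \ref{announodue}, the continuity part of that lemma yields $\|x^{\eps,\delta}-x^\eps\|_\infty\to 0$ as $\delta\downarrow 0$, so that $(x^{\eps,\delta},\rho^{\eps,\delta})\in\cB$ converges in $\Upsilon$ to $(x^\eps,\rho^\eps)$.

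Combining the two approximations via a diagonal sequence $\delta=\delta(\eps)\downarrow 0$ fast enough produces $(x^{(n)},\rho^{(n)})\in\cB$ with $(x^{(n)},\rho^{(n)})\to(x,\rho)$ in $\Upsilon$, proving the density. The main (and really the only nontrivial) point to get right is that both operations preserve the constraint $\sum_\s\rho_\s=1$: the convex-combination step respects it automatically, and the mollification step respects it thanks to the extension by the constant $1/|\G|$ outside $[0,T]$. Strict positivity and $C^1$ regularity are then immediate from the mollifier construction.
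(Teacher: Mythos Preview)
Your proof is correct and follows essentially the same strategy as the paper: approximate $\rho$ by smooth strictly positive densities summing to one, then recover the $x$-component via the continuity of the map $\rho\mapsto x$ from Lemma \ref{announodue}. The only tactical difference is the order of operations: the paper mollifies first, then forces positivity via $\max\{1/n,\rho^{(n)}_\s\}$ and renormalizes, whereas you first force positivity via a convex combination with the uniform measure and then mollify (extending by $1/|\G|$ outside $[0,T]$ so the sum constraint survives convolution). Your ordering is arguably cleaner, since the paper's $\max$ step need not preserve $C^1$ regularity without further smoothing, while in your construction positivity, the constraint $\sum_\s\rho_\s=1$, and $C^1$ regularity are all manifestly preserved at each stage.
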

 \begin{proof}
Fix $(x, \rho) \in \Upsilon$. Then, by using mollifiers, one can show that for each $\s\in \G$ there exists a
sequence $\rho^{(n)}_\s(s) \in C^1[0,T]$ such that $\rho^{(n)}_\s
(s)$ converges to $\rho_\s(s)$ in $L^1[0,T]$ as $n\uparrow \infty$.
At cost to take $ \max\{1/n, \rho^{(n)}_\s (s) \}$ we can assume
that $\rho^{(n)} _\s$ is positive; at cost to
  normalize, we can assume that $\sum _\s
\rho^{(n)} _\s (s)=1$ for each $s \in [0,T]$.
% and then,  at cost to
%take a subsequence, we can assume that the convergence is also
%pointwise.
 Since $L^1$--convergence is stronger than
$L[0,T]$--convergence (where $L[0,T]$ is endowed with the metric
defined by the r.h.s. of \eqref{georgia}), we obtain that
$\rho^{(n)}_\s$ converges to $\rho_\s$ in $L[0,T]$ as $n\uparrow
\infty$. Let us call $x^{(n)}$ the solution of the Cauchy problem
\begin{equation*}
\begin{cases}
 \dot{x}^{(n)}(t)= \sum _{\s \in \G} \rho ^{(n)} _\s (t) F_\s ( x^{(n)}(t),t) \qquad t \in [0,T]\,,  \\
x^{(n)}(0)=x_0\,. \end{cases}
\end{equation*}
 Note that $x^{(n)}$ is well defined due to Lemma \ref{announodue}.
 Then $(x^{(n)},
\rho^{(n)})$ belongs to $\cB$ and from Lemma \ref{announodue} we have
that $\|x^{(n)}-x \|_\infty\rightarrow 0$.
\end{proof}

\section{A uniform large deviation estimate}
\label{molofacciosec} In this appendix we prove formula
\eqref{fabri}, keeping the same notation introduced in the proof of
Lemma \ref{funghetto}. Formula \eqref{fabri}  follows immediately
from the next lemma, valid under the assumptions stated in Section
\ref{mod}.
\begin{Le} \label{molofaccio} For any $\d>0$ it holds
\begin{multline}\label{fabriz}
\limsup  _{u\rightarrow \infty} \sup_{(x,t) \in \mathcal K'\times
[0,T],\; \s ,\s' \in \G}
\\\frac{1}{u} \ln Q_{\s',x,t } \left[ \left| \frac{1}{u } \int_0 ^u
\bigl[ \chi ( \s( s ) =\s) - \mu (\s|x,t) \bigr]ds
 \right|\geq  \delta \right] <0\,.
\end{multline}
\end{Le}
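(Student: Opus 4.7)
The approach is via exponential tilting combined with Perron--Frobenius theory, upgraded to uniform estimates by continuity on the compact parameter set $\mathcal{K}'\times [0,T]$. For fixed $\s\in\G$ and $\theta\in\bbR$, I would introduce the tilted matrix $A_\theta(x,t) := L_c(x,t) + \theta E_\s$, where $E_\s$ denotes the diagonal matrix with a $1$ at position $(\s,\s)$ and zeros elsewhere. The Feynman--Kac identity then gives
\[
E_{Q_{\s',x,t}}\!\Big[\exp\!\Big(\theta \int_0^u \chi(\s(s)=\s)\,ds\Big)\Big] = \mathbf{e}_{\s'}^{\,T}\, e^{u A_\theta(x,t)}\, \mathbf{1},
\]
where $\mathbf{e}_{\s'}$ is the canonical basis vector. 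Since the off-diagonal entries of $A_\theta(x,t)$ coincide with those of $L_c(x,t)$, which by (A2) defines an irreducible chain on $\G$, the Perron--Frobenius theorem yields a simple real eigenvalue $\Lambda_\theta(x,t)$ of maximal real part with strictly positive left and right eigenvectors and a strict spectral gap. Spectral decomposition then bounds the above expectation by $C(x,t,\theta,\s')\, e^{u\Lambda_\theta(x,t)}$ with $C$ continuous by standard perturbation theory for simple isolated eigenvalues, hence uniformly bounded when the arguments range over any compact set.

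I would next combine this with Markov's inequality: for $\theta>0$ and $a>\mu(\s|x,t)$,
\[
Q_{\s',x,t}\!\Big[\tfrac{1}{u}\!\int_0^u \chi(\s(s)=\s)\,ds \geq a\Big] \;\leq\; C\, e^{-u\,[\,\theta a - \Lambda_\theta(x,t)\,]},
\]
with an analogous bound for $\theta<0$ controlling the lower tail $a<\mu(\s|x,t)$. Optimizing over $\theta$ produces the Cram\'er rate $I(a;x,t,\s) := \sup_\theta [\theta a - \Lambda_\theta(x,t)]$. Standard differentiation of the simple Perron eigenvalue gives $\partial_\theta\Lambda_\theta(x,t)\big|_{\theta=0}=\mu(\s|x,t)$, so $I$ vanishes at $a=\mu(\s|x,t)$; strict convexity of $\Lambda_\theta$ in $\theta$ then forces $I(\mu(\s|x,t)\pm\delta;x,t,\s)>0$. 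An alternative that avoids differentiating eigenvalues is the contraction principle, which gives $I(a;x,t,\s) = \inf\{\,j(\pi,r(\cdot,\cdot|x,t))\,:\,\pi\in\mathcal P(\G),\,\pi(\s)=a\,\}$: by Lemma \ref{vivairem} the zero set of $j(\cdot,r(\cdot,\cdot|x,t))$ coincides with the set of measures invariant for $L_c(x,t)$, which by (A2) is the singleton $\{\mu(\cdot|x,t)\}$, yielding the same positivity conclusion.

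For the final uniformity, (A3) and \eqref{lipmu} imply that both $(x,t)\mapsto L_c(x,t)$ and $(x,t)\mapsto \mu(\s|x,t)$ are continuous, so $\Lambda_\theta(x,t)$ and $I(a;x,t,\s)$ are jointly continuous in their arguments. The function $(x,t,\s)\mapsto \min\{I(\mu(\s|x,t)+\delta;x,t,\s),\,I(\mu(\s|x,t)-\delta;x,t,\s)\}$ is then continuous and strictly positive on the compact set $\mathcal{K}'\times[0,T]\times\G$, hence bounded below by some $\kappa_*>0$. Combined with the uniform upper bound on the prefactor $C$, this gives
\[
\tfrac{1}{u}\log Q_{\s',x,t}\!\Big[\Big|\tfrac{1}{u}\!\int_0^u\bigl[\chi(\s(s)=\s)-\mu(\s|x,t)\bigr]ds\Big|\geq\delta\Big] \;\leq\; \tfrac{\log C_{\max}}{u}-\kappa_*
\]
uniformly in $(x,t,\s,\s')$, and the $\limsup$ is $\leq -\kappa_*<0$.

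The main obstacle is the uniform control of the Perron spectral data, in particular ensuring the spectral gap does not close as $(x,t)$ varies. This is handled by noting that the property of having irreducible nonnegative off-diagonal entries is open in the matrix entries, so by (A3) continuity of $(x,t)\mapsto L_c(x,t)$ the simplicity and isolation of the Perron eigenvalue persist, and hence all spectral data depend continuously on $(x,t)$ on the compact $\mathcal{K}'\times[0,T]$; the finiteness of $\G$ then absorbs the starting-state dependence $\s'$ into the same uniform constants.
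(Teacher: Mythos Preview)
Your proof is correct but follows a genuinely different route from the paper. The paper argues directly on the empirical-measure level: it introduces perturbed rates $r^V(\s,\s'|x,t)=r(\s,\s'|x,t)e^{V_{\s'}-V_\s}$, computes the Radon--Nikodym derivative $dQ_{\s',x,t}/dQ^V_{\s',x,t}$, and obtains for any measurable $S\subset\cK'\times[0,T]\times\cM^1(\G)$ the bound
\[
\limsup_{u\to\infty}\frac1u\ln\sup_{(x,t)}Q_{\s',x,t}\bigl((x,t,\rho^{(u)})\in S\bigr)\le -\sup_V\inf_{(x,t,\rho)\in S}\widehat{\cJ}\bigl(c[\rho,r(\cdot,\cdot|x,t)],e^V\bigr).
\]
A minimax lemma from \cite{KL} (Appendix~2, Lemma~3.2) then exchanges $\sup_V$ and $\inf$ over compact $S$, yielding the rate $\inf_{(x,t,\rho)\in S} j(\rho,r(\cdot,\cdot|x,t))$; strict positivity on the compact set $S=\{|\rho_\s-\mu(\s|x,t)|\ge\delta\}$ follows from Lemma~\ref{vivairem} and the continuity of $j$ established in Lemma~\ref{pierbau}. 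Your primary line via the tilted generator $L_c+\theta E_\s$, Perron--Frobenius, and Markov's inequality is the classical G\"artner--Ellis route: it is equally valid and arguably more widely known, at the price of importing spectral perturbation theory to secure the uniform prefactor and the joint continuity of the Cram\'er rate $I$. Your alternative via the contraction principle combined with Lemma~\ref{vivairem} is in fact essentially the paper's argument in compressed form. The paper's approach has the merit of remaining entirely inside the variational machinery (the functions $\widehat\cJ$, $\cJ$, $j$) already set up in Section~\ref{var_problema}, with no spectral theory; yours makes the Cram\'er structure of the one-dimensional projection $\rho\mapsto\rho_\s$ more transparent and avoids the minimax interchange step.
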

\begin{proof} We fix some notation.
We call $\rho^{(u)}$ the empirical measure in the time interval
$[0,u]$ defined as
$$
\rho^{(u)}_\s:=\frac 1u \int_0^u\chi ( \s( s ) =\s) ds \, \qquad
\forall \s\in \G\, .
$$
This is an element of $\mathcal M^1(\G)$, the set of probability
measures on $\G$. Moreover, we write $\cE:=K'\times[0,T]\times
\mathcal M^1(\G)$. Note that $\cE$ is a Polish space. For any fixed
$\s'\in\G$ and for any time $u$ we call $\widehat{Q}_{\s'}^u$ the
map that associates to any measurable subset $S\subseteq \cE$ the
nonnegative  number
$$
\widehat{Q}_{\s'}^u(S):=\sup_{(x,t)\in \mathcal K'\times
[0,T]}Q_{\s',x,t}\Big(\Big\{(x,t,\rho^{(u)})\in
S\Big\}\Big)=\sup_{(x,t)\in \mathcal
K'\times[0,T]}Q_{\s',x,t}\Big(\rho^{(u)}\in S_{(x,t)}\Big)\,,
$$
where $S_{(x,t)}:=\left\{\rho\in \mathcal M^1(\G)\ :\ (x,t,\rho)\in
S\right\}$.

%Given $O\subseteq \mathcal K'\times [0,T]\times \mathcal M^1(\G)$ an
%open set, we have the following upper bound.

In order to obtain upper bounds on $ \widehat{Q}_{\s'}^u(S)$, we
proceed as follows. Given an arbitrary function $V:\G\to \mathbb R$
we introduce, likewise in \eqref{alig}, the perturbed rate for a
jump from $\s$ to another chemical state $\s'$ as
\begin{equation}
r^V(\s,\s'|x,t):=r(\s,\s'|x,t)e^{V_{\s'}-V_\s}\,.
\end{equation}
The law of the Markov chain on $\G$ with the above perturbed rates
and  with initial condition $\s'$ is called $Q_{\s',x,t}^{V}$.
 Using a result analogous to
\eqref{speriamoinbene} in this simpler framework and using the
notation introduced in Section \ref{var_problema}, we obtain that
\begin{multline*}
 \limsup  _{u\rightarrow \infty} \frac{1}{u} \ln\left[
\widehat{Q}_{\s'}^u\left(S\right)\right]  =\limsup  _{u\rightarrow
\infty} \frac{1}{u}\sup_{(x,t)\in \mathcal K'\times [0,T]} \ln\Big[
Q_{\s',x,t}\Big(\rho^{(u)}\in S_{(x,t)}\Big)\Big] \\
=\limsup  _{u\rightarrow \infty} \frac{1}{u}\sup_{(x,t)\in \mathcal
K'\times [0,T]} \ln\Big[ \mathbb
E_{Q^{V}_{\s',x,t}}\Big(\frac{dQ_{\s',x,t}}{dQ^{V}_{\s',x,t}}
\chi\Big(\rho^{(u)}\in S_{(x,t)}\Big)\Big)\Big] \\
 \leq\limsup  _{u\rightarrow \infty} \frac{1}{u}\sup_{(x,t)\in
\mathcal K' \times [0,T]} \ln\Big[ \sup_{\rho^{(u)}\in
S_{(x,t)}}\frac{dQ_{\s',x,t}}{dQ^{V}_{\s',x,t}}\Big]
\\\text{ } \qquad \qquad \qquad=-\inf_{(x,t)\in \mathcal K'\times [0,T]}\inf_{\rho\in
S_{(x,t)}}\widehat{\mathcal
J}\Big(c[\rho,r(\cdot,\cdot|x,t)],e^V\Big)\\=-\inf_{(x,t,\rho)\in
S}\widehat{\mathcal J}\Big(c[\rho,r(\cdot,\cdot|x,t)],e^V\Big)
\end{multline*}
 We can now optimize over the arbitrary
functions $V$ obtaining
\begin{equation}
 \limsup  _{u\rightarrow \infty} \frac{1}{u} \ln
\widehat{Q}_{\s'}^u(S)\leq -\sup_{V}\inf_{(x,t,\rho)\in
S}\widehat{\mathcal
J}\left(c[\rho,r(\cdot,\cdot|x,t)],e^V\right)\label{carocaro}
\end{equation}
 We note that given a compact subset
$C\subseteq \cE$ a finite open cover  $O_1, \dots, O_n$ of $C$, it
trivially holds that $ \widehat{Q}_{\s'}^u\left(C\right)\leq
\sum_{i=1}^n\widehat{Q}_{\s'}^u\left(O_i\right)$.  Hence we can
estimate
\begin{equation}\label{ade}
\limsup  _{u\rightarrow \infty} \frac{1}{u} \ln
\widehat{Q}_{\s'}^u(C)\leq \inf _{O_1,\dots, O_n} \max _{1\leq j
\leq n} \inf _V \sup _{(x,t,\rho)\in O_j}-\widehat{\mathcal
J}\left(c[\rho,r(\cdot,\cdot|x,t)],e^V\right)\,,
\end{equation}
where the first infimum is carried over all finite open covers
$\{O_1,O_2, \dots , O_n\}$ of $C$. In order to bound the above
r.h.s. we can apply Lemma 3.2 in \cite{KL}[Appendix 2]. Indeed, the
assumption of this lemma are fulfilled since, as  composition of
continuous functions (see Lemma \ref{pierbau}),  the map $$\cE \ni
(x,t,\rho)\in \cE\rightarrow \widehat{\mathcal
J}\left(c[\rho,r(\cdot,\cdot|x,t)],e^V\right)\in \bbR$$ is
continuous for any $V$. As result we obtain that
\begin{multline}\label{carocaro}
\limsup  _{u\rightarrow \infty} \frac{1}{u} \ln
\widehat{Q}_{\s'}^u(C)\leq - \inf _{(x,t,\rho)\in C} \sup_V
\widehat{\mathcal J}\left(c[\rho,r(\cdot,\cdot|x,t)],e^V\right)=\\ -
\inf_{(x,t,\rho)\in
C}j\left(\rho,r(\cdot,\cdot|x,t)\right)=j\left(\rho^*,r(\cdot,\cdot|x^*,t^*)\right)\,,
\end{multline}
where $(x^*,t^*,\rho^*)$ is a minimum point of the continuous
function $(x,t,\rho)\to j\left(\rho,r(\cdot,\cdot|x,t)\right)$ on
the compact set $C$.

The statement of the lemma now follows from two simple facts. First:
since the map $(x,t)\in \mathcal K'\times [0,T]\to \mu(\cdot|x,t)\in
M^1(\G)$ is continuous we have that for any fixed $\s\in\G$ the set
$$
C:=\left\{(x,t,\rho)\in \mathcal K'\times[0,T]\times \mathcal
M^1(\G)\ :\ |\rho_\s-\mu(\s|x,t)|\geq \delta\right\}
$$
is compact. Second: from Remark \ref{vivairem} we have that the
continuous function $j\left(\rho,r(\cdot,\cdot|x,t)\right)$ is
strictly positive on $C$ and therefore also its minimum.

\end{proof}

\section{Miscellanea}\label{misciotto}   In this last appendix, we prove Lemma \ref{sibillino} and we collect some
technical results frequently used in the paper.

\smallskip

\noindent
{\bf Proof of Lemma \ref{sibillino}}. We consider here only
\eqref{saratoga3}, since the Cauchy problem \eqref{saratoga0} can be treated similarly.
 First, we observe that due to \eqref{lipmu}  the field $\bar F (x,t)$ is
  continuous  on $\bbR^d \times [0,T]$,  is  locally Lipschitz w.r.t. $x$ and satisfies
  \eqref{muccamuu} with $\bar F $ instead of $F_\s$. Then, due to Picard Theorem, the Cauchy problem \eqref{saratoga3} has locally a unique solution.
We only need to show that there exists a global solution on $[s,T]$.   Given $b>0$ we define
$M(b)= \max \{ |\bar F (x,t) |\,:\, |x-x_0|\leq b \,, \; t \in [0,T]\}$. Then by Peano Theorem, there exists a $C^1$ solution
$x(t)$ of  \eqref{saratoga3} defined for $t \in  [s, s+\a] $, where $\a:= \min\{ T-s, b/M(b)\}$. Due to \eqref{muccamuu}, we know that
$$ b/M(b) \geq  b /( \k_1+\k_2 |x_0|+ \k _2 b )\,.$$ We take $b=b(x_0)$ large enough that $b/M(b)\geq 1/(2\k_2)$. This implies that the solution
of \eqref{saratoga3} exists always on the interval $[s, (s+ 1/(2 \k_2))\wedge T]$, which does not depend on $x_0$. By patching a finite number of paths,
one obtains the global solution of \eqref{saratoga3}.

\qed

\medskip

Let us now show another consequence of Assumption (A4):

\begin{Le}\label{phelps}
Given $\s$ an element of $D([0,T], \G)$ and   given $s
\in [0,T]$, let $x(t|x_0,s)$
 %and $x_*(t|x_0,0)$
  be the unique continuous and piecewise $C^1$
solution on $[s,T]$  of the ODE  $ \dot{x}(t) = F_{\s(t) }(x(t),t) $
% and $\dot{x_*} (t) = \bar F (x(t),t) $, respectively,
 starting at $x_0$ at time $s$. Then, for each compact subset
 $\cK\subset \bbR^d$ there exists another compact subsect $\cK'\subset
 \bbR^d$, independent from the path $\s(t)$, such that
$$\{ x(t|x_0, s): s \leq t \leq T,\; x_0 \in \cK\} \subset \cK'\,.$$
The same thesis holds if one replace $x(t|x_0,s)$ with the $C^1$
solution $x_*(t|x_0,s)$ of the ODE  $\dot{x_*} (t) = \bar F
(x_*(t),t) $,  starting at $x_0$ at time $s$.
\end{Le}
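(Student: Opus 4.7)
The plan is to exploit the linear growth assumption \eqref{muccamuu} in (A4) together with the Gronwall inequality. Observe that the bound $|F_\s(x,t)| \leq \k_1 + \k_2 |x|$ holds for every $\s \in \G$ with constants independent of $\s$, and the same bound holds for the averaged field $\bar F$ since $\bar F(x,t) = \sum_\s \mu(\s|x,t) F_\s(x,t)$ is a convex combination of the $F_\s(x,t)$. Therefore, no matter what measurable path $\s(\cdot)$ drives the equation, the solution satisfies a uniform a priori linear bound on its velocity.

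First I would write the solution in integral form: for any $t \in [s,T]$,
\[
x(t|x_0,s) = x_0 + \int_s^t F_{\s(r)}\bigl(x(r|x_0,s),r\bigr)\, dr,
\]
which is legitimate since $x(\cdot|x_0,s)$ is continuous and piecewise $C^1$ and $F_{\s(r)}$ is bounded on compacts. Taking moduli and using \eqref{muccamuu},
\[
|x(t|x_0,s)| \leq |x_0| + \k_1 T + \k_2 \int_s^t |x(r|x_0,s)|\, dr.
\]
Then I would apply Gronwall's inequality to conclude
\[
|x(t|x_0,s)| \leq \bigl(|x_0| + \k_1 T\bigr) e^{\k_2 T}, \qquad \forall t \in [s,T].
\]
Setting $R := \bigl(\sup_{x_0 \in \cK}|x_0| + \k_1 T\bigr) e^{\k_2 T} < \infty$, the closed ball $\cK' := \overline{B(0,R)}$ is a compact subset of $\bbR^d$ containing $x(t|x_0,s)$ for every $s\leq t\leq T$, every $x_0 \in \cK$, and every path $\s(\cdot) \in D([0,T],\G)$. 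The crucial point is that the bound depends only on $\k_1, \k_2, T$ and $\cK$, and is entirely uniform in the driving path $\s(\cdot)$.

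The second statement is now immediate: since $\bar F$ satisfies \eqref{muccamuu} with the same constants $\k_1,\k_2$, the identical Gronwall argument applied to $\dot{x}_* = \bar F(x_*,t)$ yields the same compact $\cK'$. I do not anticipate any serious obstacle: the whole argument is essentially a one-line Gronwall estimate, the only subtlety being the observation that the linear growth constants do not depend on $\s$, which is guaranteed by our hypothesis \eqref{muccamuu}, and that the averaged field $\bar F$ inherits the same growth bound.
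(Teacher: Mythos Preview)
Your proof is correct and follows essentially the same approach as the paper: write the solution in integral form, apply the linear growth bound \eqref{muccamuu}, and use Gronwall to obtain the uniform estimate $(|x_0|+\k_1 T)e^{\k_2 T}$. Your version is in fact slightly more explicit, spelling out the compact $\cK'$ and the reason $\bar F$ inherits the same growth bound.
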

\begin{proof}
 We give the proof only for $x(t|x_0,s)$ since  the other case can be
treated similarly. Due to \eqref{muccamuu}, we can bound
$$ |x(t|x_0,s)|\leq |x_0|+ \int _s ^t |F_{\s(u)}(x(u),u) |du \leq |x_0|+ \k_1
(t-s)+ \k_2 \int _s ^t |x(u|x_0,s) |du\,.
$$
Due to  the Gronwall inequality,  the l.h.s. is therefore bounded by
$(|x_0|+\k_1 T) e^{\k_2 T}$, thus implying the thesis.
\end{proof}

\begin{Le}
Assumptions (A2) and  (A3) imply   that $\mu(\s|x,t)$ is a $C^1$ function in $x$ and $t$, for each $\s \in \G$.
\end{Le}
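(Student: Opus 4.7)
The plan is to realize $\mu(\cdot|x,t)$ as the solution of an invertible linear system whose coefficients depend smoothly on $(x,t)$, and then conclude by Cramer's rule. Set $n:=|\G|$, fix an arbitrary reference state $\s_0 \in \G$, and introduce the $n\times n$ matrix $B(x,t)$ whose column indexed by $\s'$ is the $\s'$--th column of $L_c(x,t)$ if $\s'\neq\s_0$, and the all--ones column if $\s'=\s_0$. Viewing the invariant measure as a row vector in $\bbR^\G$, the defining conditions $\mu(\cdot|x,t)\,L_c(x,t)=0$ and $\sum_\s \mu(\s|x,t)=1$ can be rewritten, after dropping the $\s_0$--th equation from the first identity (which is redundant, as we shall see) and replacing it with the normalization, as
\begin{equation*}
\mu(\cdot|x,t)\, B(x,t)=e_{\s_0}^{T},
\end{equation*}
where $e_{\s_0}$ denotes the $\s_0$--th canonical basis vector.

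The key step is to prove that $B(x,t)$ is invertible. Suppose $u\,B(x,t)=0$ for some $u\in\bbR^\G$ regarded as a row vector. Reading off the columns $\s'\neq \s_0$ gives $(u L_c(x,t))_{\s'}=0$, while the $\s_0$--th column yields $\sum_\s u_\s=0$. Since the rows of any Markov generator sum to zero, $L_c(x,t)\mathbf{1}=0$, and therefore $\sum_{\s'\in\G}(u L_c(x,t))_{\s'}=u\,L_c(x,t)\,\mathbf{1}=0$, which forces also $(u L_c(x,t))_{\s_0}=0$. Hence $u\,L_c(x,t)=0$, i.e.\ $u$ lies in the left kernel of $L_c(x,t)$. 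By assumption (A2) together with the Perron--Frobenius theorem, this kernel is one--dimensional and spanned by $\mu(\cdot|x,t)$, so $u=c\,\mu(\cdot|x,t)$ for some $c\in\bbR$. Combining $\sum_\s u_\s=0$ with $\sum_\s \mu(\s|x,t)=1$ forces $c=0$, hence $u=0$, and $B(x,t)$ is invertible.

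Having established invertibility, I would invoke Cramer's rule: each component $\mu(\s|x,t)$ is a ratio of determinants of matrices whose entries are either $0$, $1$, off--diagonal rates $r(\s_1,\s_2|x,t)$, or diagonal terms $-\sum_{\s_2\neq \s_1}r(\s_1,\s_2|x,t)$. By assumption (A3) all these entries are $C^1$ on $\bbR^d\times[0,T]$, hence so are the two determinants; since $\det B(x,t)\neq 0$ throughout $\bbR^d\times[0,T]$, the quotient $\mu(\s|x,t)$ is a $C^1$ function of $(x,t)$. The Lipschitz estimate (\ref{lipmu}) then follows at once from $C^1$--smoothness restricted to compacts.

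The only nontrivial point in this plan is the invertibility of $B(x,t)$: one must combine the right--kernel identity $L_c(x,t)\mathbf{1}=0$ (automatic for generators) with the one--dimensionality of the left kernel granted by (A2) in order to see that replacing a single column of $L_c(x,t)$ by the all--ones column yields an invertible matrix. Everything else is standard multilinear algebra and smoothness of matrix inversion.
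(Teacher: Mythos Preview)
Your argument is correct and takes a cleaner route than the paper's own proof. The paper proceeds locally: at a fixed $(x_0,t_0)$ it picks $\s_0$ so that the remaining columns of $L_c(x_0,t_0)$ are independent, builds a $C^1$ map $F(x,t,z)$ encoding $\pi(z\,L_c(x,t))=0$ together with an ad hoc normalization $\langle z,\mu(\cdot|x_0,t_0)\rangle=\text{const}$, verifies that $\partial_z F$ is invertible at $(x_0,t_0,\mu(\cdot|x_0,t_0))$, and then applies the Implicit Function Theorem to obtain a local $C^1$ branch $h(x,t)$ which, after renormalization, coincides with $\mu(\cdot|x,t)$.

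Your approach sidesteps the IFT entirely: you replace one column of $L_c(x,t)$ by $\mathbf{1}$ to get $B(x,t)$, show directly that $B(x,t)$ is invertible for every $(x,t)$ (the nice point being that your $\s_0$ is arbitrary and fixed once for all, whereas the paper's $\s_0$ depends on $(x_0,t_0)$), and then read off $\mu(\cdot|x,t)=e_{\s_0}^{\!\top}B(x,t)^{-1}$ globally via Cramer's rule. This is more elementary and yields the global $C^1$ statement in one stroke; the paper's IFT argument gives the same conclusion but only after patching local charts. The underlying algebraic observation---that $L_c\mathbf{1}=0$ makes the dropped $\s_0$--th stationarity equation redundant, so that ergodicity forces the left kernel of $B$ to be trivial---is the same in spirit as the paper's rank computation, just packaged more efficiently.
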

\begin{proof}
We
 fix $(x_0,t_0) \in \bbR^d \times [0,T] $. By assumption (A2) and Perron--Frobenius Theorem,
 the right kernel  of the matrix $L_c(x_0,t_0)$ has dimension one and  is generated by the row  $\mu(\cdot|x_0,t_0)$ (below, we will play with
 row and column vectors, the interpretation will be clear from the context and will be understood). Hence $L_c(x_0,t_0)$
has
 rank $n-1$, where $n:= |\G|$.
 In particular, there exists $\s_0 \in \G$ such that all the $\s$--columns $v_1, v_2, \dots , v_{n-1}$, with $\s \not =\s_0$, of the matrix
 $L_c(x_0,t_0)$   are independent.
  We  define $\p$ as the canonical projection
$\p:\bbR^{\G} \rightarrow \bbR^{\G\setminus \{\s_0\}}$. Then
we consider the map
$$ F: \bbR^d \times [0,\infty) \times  \bbR ^\G \ni(x,t, z) \rightarrow ( \p(z \cdot L_c (x,t) ) ,  \sum _\s z_\s \mu (\s |x_0,t_0)-a ) \in \bbR^\G\,,
$$
where $a = \sum_\s \mu(\s|x_0,t_0)^2$.
Note that $F$ is $C^1$ and that $F\bigl(x_0,t_0 ,z_0)=0$ where $z_0:= \mu(\cdot |x_0,t_0) \bigr)$. We claim that the map $F(x_0,t_0, \cdot)$ has
invertible tangent map  $T_z F(x_0,t_0, z_0)$ in $z_0$. Indeed, $T_z(x_0,t_0,z_0)$ maps $z\in \bbR^\G$ in $z\cdot A\in \bbR^\G$, where $A$ is a
 $\G\times \G$--matrix whose columns are given - a part the order - by $v_1$, $v_2$,... , $v_{n-1}$ and $z_0$.
 We already know that $v_1, \dots, v_{n-1}$ are independent. Moreover, since $z_0\cdot L_c(x_0,t_0) =0$, $v_1, \dots , v_{n-1}$ are all orthogonal to $z_0$.
 This implies that all the columns of the matrix $A$ are independent, and therefore $A$ is invertible, thus proving our claim.

We can now apply the Implicit Function Theorem and derive that there exists an open neighborhood $U$ of $(x_0,t_0)$ and a $C^1$ map
 $h:U \rightarrow \bbR^\G$ such that $F(x,t, h(x,t))=0$ for all $(x,t)\in U$. This implies that $(h(x,t) \cdot L_c (x,t))_\s=0$ for all $\s \not = \s_0$.
 Since $L_c (x,t)\cdot \underline 1 =0$,  we know that
 $$ \sum _{\s\in \G} (h(x,t) \cdot L_c (x,t) )_\s=h \cdot  L_c(x,t) \cdot \underline 1=0 $$
thus implying that $(h(x,t)\cdot L_c (x,t))_{\s_0}=0$. Moreover, since $h(x,t)\cdot z_0=a\not = 0$, we conclude that
 $h(x,t)$ is a nonzero right eigenvector of $L_c (x,t)$ with eigenvalue $0$. By Perron--Frobenius Theorem
we have that
$$ \mu(\s|x,t) = \frac{ h_\s(x,t)}{\sum _{\s'} h_{\s'} (x,t)} \,, \qquad \forall (x,t) \in U\,.
$$
The above identity and the fact that $h$ is $C^1$ imply that $\mu(\s|\cdot, \cdot)$ is $C^1$.

\end{proof}
\bigskip

\bigskip

\noindent
 {\bf Acknowledgements}. The authors thank Prof. G. Jona--Lasinio for introducing them to the subject
 of molecular motors and for useful discussions. Moreover, they thank  Prof. E. Vanden--Eijnded for
 useful discussions. One of the authors, D.G.,
  acknowledges the support of the G.N.F.M. Young Researcher  Project ``Statistical Mechanics of Multicomponent
Systems".

\end{document}